%% file: main.tex
\documentclass[a4paper,reqno]{amsart}

\title{Spherical Twists for Gorenstein Orders and $G$-Hilb}
\author{Marina Godinho}

\input{2_packages.tex}

\input{1_biblatex.tex}
\input{3_amsthm.tex}

\usepackage{upref}
\crefdefaultlabelformat{#2\textup{#1}#3}
\begin{document}
% ==== ==== ==== ==== ==== ====

\begin{abstract}
    This paper constructs derived autoequivalences of Gorenstein orders as twists around spherical functors. More precisely, given a Gorenstein order $A$ and a quotient $p \colon A \to B$, then we specify natural conditions on $B$ under which the twist around the corresponding derived restriction of scalars functor is a derived autoequivalence of $A$. In the process, we show that the associated cotwist is a shift of the Nakayama functor of $B$. These results, together with local-to-global technology, are then used construct new derived autoequivalences for skew group algebras and $G$-Hilbert schemes, and we apply this theory to explicit examples.
\end{abstract}

\maketitle

\tableofcontents

\section{Introduction}

In recent years, the action of autoequivalences on  triangulated categories has not only been a key tool to study of the structure of these categories, but it has also found a wide variety of applications in in birational and enumerative geometry, mirror symmetry, as well as moduli (See, for example, \cite{BDA, BPP, GM, J, MY, Qiu} in representation theory and \cite{add, Don, BW25, DE13, DW4, ST, Toda} in algebraic geometry).

In this paper, we will construct autoequivalences of derived categories as spherical twists and cotwists. Twists around spherical objects were introduced by Seidel and Thomas \cite{ST} in order to obtain derived autoequivalences of complex smooth projective varieties. This construction was generalised by \cite{AL} and various authors to twists around spherical functors. For a survey on the development of this technology, see, for example, \cite{add} or \cite{seg}.  

Roughly, given triangulated categories $\cA$, $\cB$ and a  functor $S \colon \cA \to \cB$, then \cite{An, AL}  construct two closely related functors $T \colon \cB \to \cB$ and $C \colon \cA \to \cA$ called the \textit{twist} and \textit{cotwist} around $S$, respectively. When both $T$ and $C$ are equivalences, then $S$ is said to be a \textit{spherical functor}. 

Properties of the cotwist afford control over properties of the twist. Therefore, given an autoequivalence $T$ of a triangulated category $\cB$, it is interesting to characterise $T$ as a twist around a functor $S \colon \cA \to \cB$ whose domain category $\cA$ is simpler (e.g. $\cA$ is the derived category of a finite dimensional algebra) than the codomain category $\cB$ (e.g. $\cB$ is the category of an infinite dimensional algebra).

The setting of this paper is that of twists around ring morphisms. Suppose $p \colon A \to B$ is a surjective morphism between algebras $A$ and $B$. For now, assume that $B$ finite dimensional and is perfect as a right and left $A$-module. Then, $p \colon A \to B$ induces the restriction-extension of scalars adjoint pairs as highlighted below. 

\begin{equation*} 
    \begin{tikzcd}[column sep=10em]
        \Db(B) \arrow[rr, "F  = - \otimes^{\Lderived{}} {}_{B}  B {}_A \cong \Rderived{\Hom_{B}}{({}_A B {}_{B}, -)}" description, ""{name=F,above}] & {} & \arrow[ll, "F^{\mathrm{RA}} = \Rderived{\Hom_A}{( {}_{B} B {}_A, -)}"{name=RA}, bend left=12] \arrow[ll, "F^{\mathrm{LA}}  = - \otimes^{\Lderived{}} {}_A B {}_{B}"{name=LA}, swap, bend right=12] \Db(A)
        \arrow[phantom, from=LA, to=F, "\scriptstyle\boldsymbol{\perp}"description]
        \arrow[phantom, from=F, to=RA, "\scriptstyle\boldsymbol{\perp}"description]
    \end{tikzcd}
\end{equation*}

The twist around $F$ was characterised in \cite{Go} as follows.

\begin{thmx}[{\cite[3.8]{Go}}]
    The twist around $F$ is
    \begin{align*}
        \cT & = \Rderived{\Hom}_A({}_A \ker p {}_A, {}_A, -) \colon \Db(A) \to \Db(A).
    \end{align*}
\end{thmx}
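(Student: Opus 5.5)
The plan is to compute the twist directly from its definition as the cone of the counit of the adjunction $F \dashv F^{\mathrm{RA}}$, and to recognise that cone as a derived Hom out of $\ker p$. Following \cite{An, AL}, the twist around $F$ is defined by the exact triangle of functors
\[
F F^{\mathrm{RA}} \xrightarrow{\ \varepsilon\ } \mathrm{Id}_{\Db(A)} \longrightarrow \cT \longrightarrow,
\]
where $\varepsilon$ is the counit. To make sense of cones of natural transformations, I will work at the level of bimodules, i.e.\ Fourier--Mukai-type kernels for algebras. The target of the argument is therefore a triangle of $A$-$A$-bimodule complexes whose associated $\Rderived{\Hom}_A(-,-)$ functors recover this triangle.

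First I express $F F^{\mathrm{RA}}$ via a kernel. We have $F^{\mathrm{RA}} = \Rderived{\Hom}_A({}_B B_A, -)$, and $F$ is restriction of scalars, which can be written as $\Rderived{\Hom}_B({}_A B_B, -)$. Composing and using tensor--Hom adjunction gives
\[
F F^{\mathrm{RA}} \cong \Rderived{\Hom}_A({}_A B \otimes^{\Lderived{}}_B B_A, -) \cong \Rderived{\Hom}_A({}_A B_A, -),
\]
since $B \otimes^{\Lderived{}}_B B \cong B$. The identity functor is $\Rderived{\Hom}_A({}_A A_A, -)$. Under these identifications I then check that the counit $\varepsilon$ is the map induced, contravariantly, by the bimodule map $p \colon {}_A A_A \to {}_A B_A$. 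This amounts to a direct computation: the counit on a module $M$ is evaluation $\Hom_A(B, M) \to M$, $f \mapsto f(1)$, which is precomposition with $p$ under $\Hom_A(A,M) \cong M$.

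Since $p$ is surjective, we have a short exact sequence of bimodules $0 \to \ker p \to A \to B \to 0$, hence a triangle ${}_A\ker p_A \to A \xrightarrow{p} B \to$ in the derived category of bimodules. Applying the contravariant exact functor $\Rderived{\Hom}_A(-, X)$ yields a triangle
\[
\Rderived{\Hom}_A(B, X) \xrightarrow{\ p^*\ } \Rderived{\Hom}_A(A, X) \longrightarrow \Rderived{\Hom}_A(\ker p, X) \longrightarrow,
\]
functorial in $X$. Comparing it with the defining triangle gives $\cT \cong \Rderived{\Hom}_A({}_A\ker p_A, -)$. The perfectness hypotheses on $B$ guarantee that everything preserves $\Db(A)$: $\ker p$ is perfect on the relevant side by the 2-out-of-3 property.

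The main obstacle is the well-definedness of the cone. Cones in a triangulated category are not functorial, so to have $\cT$ canonically identified one must define the twist through an enhancement, namely as the cone of the morphism of bimodule kernels (as in \cite{AL}, via dg-bimodules). The key step is therefore to verify that the counit is represented by $p$ at the level of kernels, not merely objectwise. I would do this by taking a bimodule-projective resolution of $B$ and checking compatibility of the adjunction isomorphisms with $p$. Once that holds, the cone of the kernel morphism $A \to B$ is, up to shift conventions, $\ker p[1]$, and the contravariance of $\Rderived{\Hom}$ converts this into the stated formula with no shift.
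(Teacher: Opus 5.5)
Your proposal is correct and follows essentially the route behind \cite[3.8]{Go} as set up in Remark~\ref{funct cones}. In both, the counit of $F\dashv F^{\mathrm{RA}}$ is the map induced by $p$, so the twist is $\Rderived{\Hom}_A(\cone(p)[-1],-)$, which is $\Rderived{\Hom}_A(\ker p,-)$ for surjective $p$. The one difference is that the paper's kernels are tensor-type, $t=\cone(\Rderived{\Hom}_A(B,A)\Dtensor_B B\to A)\cong\Rderived{\Hom}_A(\ker p,A)$. Matching your Hom-type description to this uses $\Rderived{\Hom}_A(K,-)\cong -\Dtensor_A\Rderived{\Hom}_A(K,A)$ for $K$ perfect as a right $A$-module, so perfectness of $B$ is needed for this identification too, not only for boundedness.
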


The goal of this paper is to specify additional natural assumptions on the surjection $p \colon A \to B$ so that the functor $F$ is spherical. More specifically, we will specify conditions on $p$ under which $\cT$ is an equivalence and, moreover, the associated cotwist is a shift of the Nakayama autoequivalence of $B$. As a consequence, we apply this technology to the setting of skew-group algebras and $G$-Hilbert schemes., thus obtaining new interesting derived autoequivalences both in algebraic and geometric settings.

\subsection{Gorenstein orders}

Throughout, let $\cR$ be a commutative noetherian local ring of Krull dimension $d$ and admitting a dualizing module $\omega_{\cR}$ as in \cref{module fin}. Then, a module finite $\cR$-algebra $A$ is a \textit{Gorenstein $\cR$-order} if there is an $A$-bimodule isomorphism $\omega_A := \Rderived{\Hom_\cR}(A, \omega_\cR) \cong A$. Further details of Gorenstein orders are recalled in \S \ref{symmetric alg}, but the key property of these algebras we are interested in is that they exhibit a form of Serre duality (see \eqref{derived serre duality} and \eqref{serre duality} for more details).

In view of geometric applications, we will work within the following setup.

\begin{setx} \label[setup]{setup intro}
    Let $p \colon A \to B$ be a surjection of $K$-algebras where $A$ is a Gorenstein $\cR$-order and $B$ is finite dimensional over $K$. Suppose further that $B$ is perfect as a right $A$-module and that it either has finite global dimension or is self-injective.
\end{setx}

Our first main result specifies conditions so that the functor $F$ is an spherical.

\begin{thmx}[{\ref{finite cotwist}, \ref{spherical condition 4}}] \label{thm intro 1}
    Under \cref{setup intro}, suppose that $\Ext^k_A(B, B) = 0$ for all $k \neq 0, d$. Then $F \colon \Db(\modu B) \to \Db(\modu A)$ is spherical. Thus,
    \begin{align*}
        \cT & = \Rderived{\Hom}_A({}_A \ker p {}_A, -) \colon \Db(A) \to \Db(A).
    \end{align*}
    is an equivalence. Moreover, the cotwist around $F$ is $\cC = [-d-1] \cdot \cN$.
\end{thmx}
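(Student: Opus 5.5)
Since the twist is already identified by \cite[3.8]{Go}, the plan is to compute the cotwist directly and show it is an autoequivalence. We then invoke the standard criterion of \cite{An, AL}: it suffices that the cotwist $\cC$ is an equivalence and that the canonical map $F^{\mathrm{RA}} \to \cC F^{\mathrm{LA}}[1]$ (or the appropriate variant) is an isomorphism. Here $F = \Rderived{\Hom_B}({}_A B_B,-)$ is restriction of scalars, and the cotwist is the cone of the unit,
\[ \mathrm{id}_{\Db(B)} \to F^{\mathrm{RA}} F = \Rderived{\Hom_A}({}_B B_A, -\otimes^{\Lderived{}}_B B_A)\cong -\otimes^{\Lderived{}}_B \Rderived{\Hom_A}(B_A,B_A), \]
shifted by $[-1]$. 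The last isomorphism uses that $B$ is perfect as a right $A$-module.

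The first step computes the $B$-bimodule $E := \Rderived{\Hom_A}(B,B)$. Its cohomology is $\Ext^k_A(B,B)$, which by hypothesis vanishes outside $k=0,d$. Since $p$ is surjective, $\Hom_A(B,B)=B$, and the unit $B\to E$ is the inclusion of $H^0$. Hence $\cC \cong -\otimes^{\Lderived{}}_B \Ext^d_A(B,B)[-d-1]$, and it remains to identify $\Ext^d_A(B,B)\cong DB = \Hom_K(B,K)$ as $B$-bimodules, so that $-\otimes_B DB = \cN$.

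For this we use Serre duality for Gorenstein orders, \eqref{derived serre duality}. Since $B$ is finite length over $\cR$, we have $\Rderived{\Hom_\cR}(B,\omega_\cR)\cong DB[-d]$ (local duality, with $\omega_\cR$ normalised appropriately). Then
\[ \Rderived{\Hom_A}(B,B) \cong D\,\Rderived{\Hom_A}(B,B)[-d] \]
functorially, with $\omega_A\cong A$ as bimodules. Taking $H^d$ gives $\Ext^d_A(B,B)\cong D\Hom_A(B,B)=DB$. The main obstacle is checking that these isomorphisms are compatible with both the left and right $B$-actions, since the bimodule structure is what determines the functor. I would track this by writing the duality as $\Rderived{\Hom_A}(X,Y)\cong D\Rderived{\Hom_A}(Y,X\otimes^{\Lderived{}}_A \omega_A)[-d]$, which is natural in both $X$ and $Y$, and applying it with $X=Y=B$.

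Finally, $\cN=-\otimes^{\Lderived{}}_B DB$ is an autoequivalence of $\Db(B)$ under either alternative of \cref{setup intro}. If $B$ has finite global dimension, it is the Serre functor. If $B$ is self-injective, $DB$ is a projective generator on each side and hence an invertible bimodule. So $\cC=[-d-1]\cdot\cN$ is an equivalence. For the remaining adjoint condition, the same duality expresses $F^{\mathrm{RA}}$ in terms of $F^{\mathrm{LA}}$ twisted by $\omega_A\cong A$, namely $F^{\mathrm{RA}}\cong \cN\circ F^{\mathrm{LA}}[-d]$. Matching this with $\cC F^{\mathrm{LA}}[1]$ gives the required isomorphism. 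The spherical theorem of \cite{AL} then shows that $\cT$ is an equivalence.
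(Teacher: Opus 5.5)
Your argument is correct, and for the cotwist it takes a more direct route than the paper. You use the tensor form of the cotwist kernel, $c=\cone\bigl(B\to\Rderived{\Hom_A}(B,B)\bigr)[-1]$, which is the description in Remark~\ref{funct cones}. The Ext-vanishing hypothesis applies to this kernel immediately. Since the unit is an isomorphism onto $\cohom{0}$, you get $c\cong\Ext^d_A(B,B)[-d-1]$. Bimodule-natural Serre duality \eqref{serre duality} with $\omega_A\cong A$ then gives $\Ext^d_A(B,B)\cong D\Hom_A(B,B)\cong DB$.

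The paper instead starts from the Hom form $\cC=\Rderived{\Hom_B}(C(s\cdot\beta),-)$ of \cite[3.15]{Go}. Its kernel $C(s\cdot\beta)=\cone(B\Dtensor_A B\to B)$ is controlled by $\Tor^A_*(B,B)$, not by $\Ext^*_A(B,B)$. To bring the hypothesis to bear, the paper:
\begin{itemize}
\item tensors with $DB$;
\item uses the Tor--Ext duality of Lemma~\ref{tor ext duality} and Corollary~\ref{CMness} to show that $C(s\cdot\beta)\Dtensor_B DB$ is a shift of $B$;
\item deduces that $C(s\cdot\beta)$ is a shifted inverse of $DB$;
\item inverts the left adjoint $-\Dtensor_B C(s\cdot\beta)$.
\end{itemize}
Your version needs neither inverse bimodule complexes nor the invertibility of $\cN$ to identify $\cC$; invertibility enters only to see that $\cC$ is an equivalence. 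The paper's version has the advantage of an explicit description of $C(s\cdot\beta)$, which it reuses in the local-to-global arguments of \S\ref{new derived equivalences skew}. Your treatment of $F^{\mathrm{RA}}\cong\cN F^{\mathrm{LA}}[-d]$ via $\Rderived{\Hom_A}(B,A)\cong B^\dagger\cong DB[-d]$ is the same as the paper's.

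One step needs a different justification. You state the criterion as requiring the \emph{specific} natural transformation of condition (4) in Lemma~\ref{spherical criteria} to be an isomorphism. What you actually produce is an abstract isomorphism $F^{\mathrm{RA}}\cong\cC F^{\mathrm{LA}}[1]$ built from dualities, and you never check that it agrees with that transformation. So Lemma~\ref{spherical criteria} and \cite{AL} alone do not finish the proof. The paper closes this with Theorem~\ref{sph criterion add} (\cite[1]{add}): once $\cC$ is an equivalence, any isomorphism $F^{\mathrm{RA}}\cong\cC F^{\mathrm{LA}}[1]$ suffices. Cite that at the last step.

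You should also record that the adjoints restrict to the bounded categories of finitely generated modules in the statement. $F^{\mathrm{RA}}$ does because $B_A$ is perfect, and your isomorphism then gives $F^{\mathrm{LA}}\cong\cN^{-1}F^{\mathrm{RA}}[d]$.
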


Since $B$ is assumed to be finite dimensional, it has finitely many simples up to isomorphism. Thus, let $\Omega = \{ S_j \}_{j=1}^k$ denote the set containing one representative from each isomorphism class of simple modules. Further, if $B$ is self-injective, let $\sigma$ be the Nakayama permutation of $B$.

Since the twist and cotwist are closely related, we obtain the following immediate corollary of \cref{thm intro 1}. 

\begin{corx}[{\ref{permutes simples}}] 
Under \cref{setup intro}, suppose that $\Ext^k_A(B, B) = 0$ for all $k \neq 0, d$. Then, the diagram
    \[ \begin{tikzcd}[column sep = 5em]
        \Db(\modu A) \arrow[r, "\cT"] & \Db(\modu A) \\
        \Db(\modu B) \arrow[u, "F"] \arrow[r, "{\cN(-)[-d+1]}"] & \Db(\modu B) \arrow[u, "F"]
    \end{tikzcd} \]
    commutes. In particular, 
    \begin{align*}
        & \Rderived{\Hom_A}(\ker p, {}_{B} B) \cong {}_{B} D B {}_A [-d + 1].
    \end{align*}
    If, further, $B$ is self-injective then
    \begin{align*}
        & \Rderived{\Hom_A}(\ker p, S_i) \cong S_{\sigma^{-1}(i)} {}_A [-d + 1].
    \end{align*}
\end{corx}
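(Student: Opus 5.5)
The plan is to deduce the corollary from \cref{thm intro 1} and the general formalism of spherical functors (\cite{AL}), which gives the relation between twist and cotwist. Concretely, the twist $\cT$ fits into a triangle $F F^{\mathrm{RA}} \to \mathrm{Id} \to \cT$, and the cotwist $\cC$ into $\cC \to \mathrm{Id} \to F^{\mathrm{RA}} F$. A standard consequence of sphericity is the natural isomorphism $\cT F \cong F \cC [2]$. To see this, apply $\cT$ to $F$ and compare the triangles $F F^{\mathrm{RA}} F \to F \to \cT F$ and $F\cC \to F \to F F^{\mathrm{RA}} F$. The unit–counit identity splits the map $F \to F F^{\mathrm{RA}} F$, and this yields $\cT F \cong F\cC[2]$. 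This is exactly the content of \cite[Prop.~2.?]{AL} for spherical functors.

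Inserting $\cC = [-d-1]\cdot\cN$ from \cref{thm intro 1} gives $\cT F \cong F \cN [-d+1]$. This is the commutativity of the square.

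For the explicit formulas, first evaluate the square on $B$ viewed as an object of $\Db(\modu B)$, with its bimodule structure kept track of. Since $F$ is restriction of scalars, $F(B) = {}_B B_A$. The twist formula gives $\cT(B) = \Rderived{\Hom_A}(\ker p, B)$. The Nakayama functor gives $\cN(B) = DB$. Because the isomorphism $\cT F \cong F \cN[-d+1]$ is natural, it respects the left $B$-action coming from $\End_B(B) = B^{\mathrm{op}}$ acting by functoriality. Hence the isomorphism is one of bimodules: $\Rderived{\Hom_A}(\ker p, {}_B B) \cong {}_B DB_A[-d+1]$.

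In the self-injective case, evaluate on the simple $S_i$. For a self-injective algebra, $\cN = - \otimes_B DB$ is an exact autoequivalence, and it sends $S_i$ to $S_{\sigma^{-1}(i)}$; the exact index convention depends on how $\sigma$ is defined, and I would check this against the paper's definition of the Nakayama permutation. Applying $F$ then gives $\Rderived{\Hom_A}(\ker p, S_i) \cong S_{\sigma^{-1}(i)\,A}[-d+1]$.

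The main obstacle is the natural isomorphism $\cT F \cong F\cC[2]$ with the correct shift, together with the fact that it is compatible with bimodule structures. To handle both, I would realise all the functors as Fourier–Mukai-type bimodule kernels, following \cite{Go}. The twist kernel is $\Rderived{\Hom}_A(\ker p, -)$, coming from the triangle $\ker p \to A \to B$. The cotwist kernel is then computed from $B \otimes^{\Lderived{}}_A B$, and I would verify the shift directly at the kernel level. The shift conventions in the paper's definitions of $\cT$ and $\cC$ should also be checked so that the result comes out as $[-d+1]$ rather than $[-d-1]$.
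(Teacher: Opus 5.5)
Your proposal is correct and is essentially the paper's argument: the paper likewise combines sphericity from \ref{spherical condition 4} with the identity $\cT \cdot F \cong F \cdot \cC[2]$ (citing \cite[section 1.3]{add} rather than \cite{AL}) and $\cC \cong [-d-1]\cdot\cN$ from \ref{finite cotwist}, leaving the evaluations at $B$ and $S_i$ implicit. Your index check also works with the paper's convention $\soc P(i) = S(\sigma(i))$, since $\cN(S_i) \cong D\Hom_B(S_i, B) \cong S_{\sigma^{-1}(i)}$.
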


\medskip
The Ext-vanishing condition required to construct the spherical twist may seem unnatural. However, when $\dim \cR = 2, 3$, this condition is guaranteed whenever $(\ker p)^2 \cong \ker p$ (see \ref{tor vanishing} for more details). This is true, for instance, when $B$ is an idempotent quotient of $A$. We thus obtain the following corollary in dimensions $2$ and $3$.

\begin{corx} [{\ref{3d cotwist}, \ref{3d spherical}}] \label[corollary]{thm intro 2}
   Under \cref{setup intro}, suppose that $\dim \cR = 2, 3$. If $(\ker p)^2 \cong \ker p$, then, $F \colon \Db(\modu B) \to \Db(\modu A)$ is spherical. The cotwist around $F$ is $\cC \cong [-d-1] \cdot \cN$, where $\cN(-)$ denotes the derived Nakayama autoequivalence of $B$.
\end{corx}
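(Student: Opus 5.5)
The plan is to reduce \cref{thm intro 2} to \cref{thm intro 1}. Since we are already under \cref{setup intro}, it is enough to prove the Ext-vanishing condition: $\Ext^k_A(B,B)=0$ for all $k\neq 0,d$ whenever $d=\dim\cR\in\{2,3\}$ and $(\ker p)^2\cong\ker p$. Once this holds, \cref{thm intro 1} says that $F$ is spherical with cotwist $[-d-1]\cdot\cN$.

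Write $I=\ker p$. The first step is to compute the low Tor groups of $B$ over $A$. Tensor $0\to I\to A\to B\to 0$ with $B$ over $A$. This gives $\Tor_0^A(B,B)=B\otimes_A B\cong B$ and
\[
\Tor_1^A(B,B)\cong I\otimes_A B \cong I/I^2
\]
(to be precise, $\Tor_1^A(A/I,A/I)\cong I/I^2$). Hence the hypothesis $I^2=I$ gives $\Tor_1^A(B,B)=0$. This is presumably the content of the forward-referenced \ref{tor vanishing}.

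The second step turns Tor-vanishing into Ext-vanishing using Serre duality for the Gorenstein order $A$, namely \eqref{derived serre duality} and \eqref{serre duality}. Since $B$ is finite dimensional, and hence of finite length over $\cR$, local duality should give
\[
\Ext^k_A(B,B)\cong D\,\Ext^{d-k}_A(B,B)
\]
up to twisting by $\omega_A\cong A$. So the Ext groups are symmetric about $d/2$. Separately, $\Ext^k_A(B,B)$ vanishes for $k<0$, and it vanishes for $k>d$ by the duality. In degree $1$, we have $\Ext^1_A(B,B)\cong\Hom_A(I,B)\cong\Hom_B(I/I^2,B)=0$.

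The cases then close as follows.
\begin{itemize}
\item For $d=2$, the only degree left is $k=1$, and we have just shown it vanishes.
\item For $d=3$, the degrees are $k=1,2$. We have $\Ext^1=0$, and duality gives $\Ext^2\cong D\Ext^1=0$.
\end{itemize}

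The main obstacle is making this duality precise for an arbitrary finite-length module $B$ over a Gorenstein order that need not have finite global dimension. Here we use that $B$ is perfect as a right $A$-module, so that $\Rderived\Hom_A(B,-)\cong -\otimes^{\Lderived{}}_A\Rderived\Hom_A(B,A)$. We also need that $\Rderived\Hom_A(B,A)\cong \Rderived\Hom_\cR(B,\omega_\cR)[\,\cdot\,]$ is concentrated in the single degree $d$, because $B$ has finite length and $\omega_A\cong A$. Together these yield
\[
\Rderived\Hom_A(B,B)\cong B\otimes^{\Lderived{}}_A D B[-d].
\]
This identifies $\Ext^k_A(B,B)$ with $\Tor^A_{d-k}(DB,B)$. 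A Tor-symmetry argument, plus the vanishing of $\Tor_1$ extended to the bimodule $DB$ (which is a $B$-module, so the same $I/I^2$ computation applies), then handles degrees $d-1$ and $1$. I expect checking that the bimodule structures match in this identification to be the delicate point.
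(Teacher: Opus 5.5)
Your proposal is correct and is essentially the paper's argument: the corollary is reduced to Theorem~\ref{thm intro 1} via Lemma~\ref{tor vanishing}, whose proof shows $\Ext^1_A(B,B)\cong\Hom_A(\ker p,B)=0$ exactly as you do and, for $d=3$, obtains $\Ext^2_A(B,B)\cong D\Ext^1_A(B,B)$ from \eqref{serre duality}. The differences are minor: the paper uses the Auslander--Buchsbaum formula rather than duality to get vanishing above degree $d$; and your ``main obstacle'' is not one, because \eqref{serre duality} applies directly with $a=B\in\Db(\fl A)$ and $b=B\in\Kb(\proj A)$, so the Tor computations and your final paragraph are not needed here.
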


\subsection{Skew-group algebras}

Consider a finite abelian group $G \subset \SL(3, \C)$ acting on the $\C$-algebra $R = \C[x, y, z]$. Then, this group action allows us to define the \textit{skew-group algebra} $R\#G$, the construction of which is recalled in \S \ref{sec:mckay}. These algebras exhibit a rich representation-theoretic structure. For example, they can be presented, up to a Morita equivalence, by the McKay quiver of $G$. This correspondence is often referred to as the \textit{special McKay correspondence}, which is also briefly recalled in \S \ref{sec:mckay}. 

In section \S \ref{new derived equivalences skew} we develop local-to-global technology in order to lift \cref{thm intro 2} to the setting of $R\#G$. As a consequence, we obtain the following result. 

\begin{thmx}[{\ref{spherical RG}, \ref{tw ctw RG}}] \label{thm intro 3}
    Let $p \colon R\#G \to B$ be a surjection, where $B$ is a finite dimensional $\C$-algebra which either is self-injective or has finite global dimension. Suppose that $\ker p = (\ker p)^2$. Then, the functor
    \[ F = - \Dtensor_B B \colon \Db(\modu B) \to \Db(\modu S\#G) \]
    is spherical. Hence, the twist
    \begin{align*}
    \cT = \Rderived{\Hom_{R\#G}}(\ker p, -) \colon \Db(\modu R\#G) \to \Db(\modu R\#G)
    \end{align*}
    is an autoequivalence. 
\end{thmx}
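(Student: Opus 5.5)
We reduce \cref{thm intro 3} to \cref{thm intro 2} by a local-to-global argument. The algebra $R\#G$ is not a Gorenstein order over a local ring. However, it is module finite over the invariant ring $R^G = \C[x,y,z]^G$, which is a Gorenstein normal domain of dimension $3$ since $G \subset \SL(3,\C)$. Moreover, $R\#G \cong \End_{R^G}(R)$ is symmetric in the sense that $\Hom_{R^G}(R\#G, R^G) \cong R\#G$ as bimodules. Consequently, for every maximal ideal $\mathfrak m \subset R^G$, the completion (or localisation) $A_{\mathfrak m} := (R\#G)_{\mathfrak m}$ is a Gorenstein $R^G_{\mathfrak m}$-order in the sense of \cref{setup intro}, with $d = 3$.

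Since $B$ is finite dimensional, its support over $R^G$ is a finite set of closed points $\mathfrak m_1, \dots, \mathfrak m_r$, and $B \cong \prod_i B_{\mathfrak m_i}$. We first check the hypotheses locally at each $\mathfrak m_i$.

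\begin{itemize}
\item Localisation is exact, so $p_{\mathfrak m_i} \colon A_{\mathfrak m_i} \to B_{\mathfrak m_i}$ is surjective with kernel $(\ker p)_{\mathfrak m_i}$, and the condition $(\ker p)^2 = \ker p$ localises.
\item Each factor $B_{\mathfrak m_i}$ is a direct factor of $B$, so it is still self-injective or of finite global dimension.
\item $B$ is perfect over $R\#G$ because $R\#G$ has finite global dimension ($R$ is regular and $|G|$ is invertible in $\C$). Perfectness therefore holds automatically, also after localisation.
\end{itemize}

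Hence \cref{thm intro 2} applies at each $\mathfrak m_i$. It shows that $\Ext^k_{A_{\mathfrak m_i}}(B_{\mathfrak m_i}, B_{\mathfrak m_i}) = 0$ for $k \neq 0,3$, and that the local cotwist is $[-4]\cdot\cN$.

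Next we globalise. The modules $\Ext^k_{R\#G}(B,B)$ are finitely generated over $R^G$ and supported at the $\mathfrak m_i$. Since localisation commutes with $\Ext$ for module finite algebras over a noetherian ring, they decompose as $\bigoplus_i \Ext^k_{A_{\mathfrak m_i}}(B_{\mathfrak m_i},B_{\mathfrak m_i})$, and therefore vanish for $k \neq 0,3$.

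We then compute the global cotwist $\cC = \mathrm{Cone}(\mathrm{id} \to F^{\mathrm{RA}}F)[-1]$ directly. Here $F^{\mathrm{RA}}F = \Rderived{\Hom_{R\#G}}(B, -\otimes_B B)$ is given by the bimodule $\Rderived{\Hom_{R\#G}}(B,B)$, which has cohomology only in degrees $0$ and $3$. The degree-$0$ part is $B$. For the degree-$3$ part, we use Serre duality for $R\#G$ relative to $R^G$ with trivial canonical bundle: for $M$ finite length and $N$ arbitrary, $\Hom(M,N) \cong D\Hom(N, M[3])$. This gives $\Ext^3_{R\#G}(B,B) \cong DB$ as $B$-bimodules. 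The Ext-vanishing then yields a triangle $B \to \Rderived{\Hom}(B,B) \to DB[-3]$, so $\cC \cong DB\otimes_B -[-4] = \cN[-4]$, which is an autoequivalence. This argument is essentially the proof of \ref{finite cotwist} with Serre duality replaced by its global form on $R\#G$. Alternatively, one can check that the global adjunction counit restricts to the local one on each factor $B_{\mathfrak m_i}$.

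It remains to prove that the twist $\cT$ is an equivalence. We use the criterion of \cite{AL}: $\cC$ is an equivalence, and we need the compatibility $F^{\mathrm{RA}} \cong \cC F^{\mathrm{LA}}[2]$ (up to the paper's shift conventions). This compatibility follows from the same global Serre duality, namely
\[
\Rderived{\Hom}_{R\#G}(B,-) \cong D\Rderived{\Hom}(-,B)[-3] \cong -\otimes^{\Lderived{}} B \otimes_B DB[-3].
\]
By \cite{Go}, the twist is then $\Rderived{\Hom}(\ker p, -)$.

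The main obstacle is making Serre duality and the bimodule identification $\Ext^3(B,B)\cong DB$ hold globally, as bimodule isomorphisms natural enough to feed into \cite{AL}. If this is awkward, we will instead prove that $\cT$ is an equivalence locally over $R^G$. The twist is $R^G$-linear and given by a bimodule kernel, and it is the identity away from the support of $B$, since $\ker p$ agrees with $R\#G$ there. We would then show that an $R^G$-linear Fourier--Mukai-type functor which is an equivalence after completion at every maximal ideal is an equivalence. This reduces everything to \cref{thm intro 2}.
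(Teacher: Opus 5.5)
Your argument is correct, but it differs from the paper's. The paper applies \S3 only locally (via \ref{local tilting RG}) and then globalises the \emph{equivalences}. It shows that $\ker p$ and $C(s\cdot\beta)$ are tilting bimodule complexes using the local-to-global results \ref{local tilting implies global} and \ref{c local implies z local pre RG}. That requires proving $C(s\cdot\beta)_{\idlm}\cong C(s'\cdot\beta')$ (\ref{C of local is local of C}) and a separate biperfectness argument for $C(s\cdot\beta)$ in both the self-injective and the finite global dimension cases. It then concludes by 2-out-of-4, since both $\cT$ and $\cC$ are equivalences.

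You instead globalise the \emph{input data}. Using $B\cong\prod_i B_{\idlm_i}$, you obtain the Ext-vanishing over $R\#G$ itself. You then rerun \ref{finite cotwist} and \ref{spherical condition 4} globally with the 3-Calabi--Yau duality of $R\#G$, and finish with \ref{sph criterion add}.

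The obstacle you flag, bimodule-level naturality, is cheap to remove. The cone of the unit $B\to\Rderived{\Hom_{R\#G}}(B,B)$ and the complex $\Rderived{\Hom_{R\#G}}(B,R\#G)$ each have cohomology in a single degree. So each is determined, in the derived category of bimodules, by one finite-length bimodule. That bimodule splits along the $\idlm_i$ into the pieces already identified as $DB_{\idlm_i}$ in \S3.

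Your route gives more than the paper states globally: the explicit global cotwist $\cC\cong\cN[-4]$ and the intertwining $\cT F\cong F\cN[-2]$. The paper only records $\cC=\Rderived{\Hom_B}(C(s\cdot\beta),-)$ there. In exchange, the paper's route needs no global Serre duality, no global symmetric-order property, and no finite support, and its tilting-is-local lemmas work over any noetherian base.

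Two small slips. First, \ref{sph criterion add} requires $F^{\mathrm{RA}}\cong\cC F^{\mathrm{LA}}[1]$, not $[2]$. Your own formulas give exactly $[1]$, since $\cC F^{\mathrm{LA}}[1]=-\Dtensor_{R\#G}DB[-3]$. Second, with the paper's right-module conventions, $\cN=-\Dtensor_B DB$ rather than $DB\otimes_B-$. Your fallback plan is essentially the paper's proof.
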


\subsection{$G$-Hilbert schemes}

On the geometric side, the action of a finite abelian group $G \subset \SL(3, \C)$ on $R = \C[x, y, z]$ allows us to construct the invariant ring $R^G$ which defines the quotient singularity $\C^3/G := \spec R^G$. 

This quotient singularity admits a crepant resolution known as $G$-$\mathrm{Hilb}(\C^3)$ \cite{Nakamura}. Moreover, \cite{BKR} prove that there is a derived equivalence
\[ \Phi_G \colon  \Db(\modu R\#G) \tow{\sim} \Db(\coh G\mathrm{-Hilb}(\C^3)) \]
thereby establishing a bridge between skew-group algebras and $G$-Hilbert schemes. Thus, autoequivalences $\Db(\modu R\#G)$ induce autoequivalences of $\Db(\coh G\mathrm{-Hilb}(\C^3))$. As a corollary of the equivalence established by \cite{BKR} and of Theorem \ref{thm intro 3} we obtain the following result in \S \ref{sec: GHilb}. 

\begin{thmx}[{\ref{quot them}}]
    Let $p \colon R\#G \to B$ be a surjection, where $B$ is a finite dimensional $\C$-algebra which either is self-injective or has finite global dimension. Suppose that $\ker p = (\ker p)^2$. Then, the functor
    \[ F_G = \Phi_G \cdot - \Dtensor_B B \colon \Db(\modu B) \to \Db(\coh G \text{-} \Hilb(\C^3)) \]
    is spherical. Thus, the twist around $F_G$
    \begin{align*}
        & \cT_G =  \Phi_G \cdot \Rderived{\Hom_{R\#G}(\ker p, -)} \cdot \Phi_G^{-1} \colon \Db(\coh G \text{-} \Hilb(\C^3)) \to \Db(\coh G \text{-} \Hilb(\C^3)).
    \end{align*}
    is an equivalence.
\end{thmx}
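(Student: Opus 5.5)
The plan is to deduce the statement formally from \cref{thm intro 3} by transporting everything along the Bridgeland--King--Reid equivalence $\Phi_G \colon \Db(\modu R\#G) \tow{\sim} \Db(\coh G\text{-}\Hilb(\C^3))$. The underlying principle is that sphericity is invariant under post-composition with an exact equivalence. Suppose $S \colon \cA \to \cB$ has adjoints $S^{\mathrm{LA}}, S^{\mathrm{RA}}$ and $\Phi \colon \cB \to \cB'$ is an equivalence. Then $\Phi S$ has adjoints $S^{\mathrm{LA}}\Phi^{-1}$ and $S^{\mathrm{RA}}\Phi^{-1}$. Its twist and cotwist are then computed from the (co)units of these composite adjunctions.

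\textbf{Step 1: transport the adjunctions.} First I check that the unit and counit of the pair $(\Phi S, S^{\mathrm{RA}}\Phi^{-1})$ are obtained from those of $(S, S^{\mathrm{RA}})$ by whiskering with $\Phi$ and $\Phi^{-1}$, using the unit/counit isomorphisms $\Phi^{-1}\Phi \cong \mathrm{id}$ and $\Phi\Phi^{-1}\cong \mathrm{id}$.

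\textbf{Step 2: compute the twist and cotwist.} This gives
\[ \cT_{\Phi S} \cong \Phi \cdot \cT_S \cdot \Phi^{-1}, \qquad \cC_{\Phi S} \cong \cC_S. \]
The cotwist is unchanged because $S^{\mathrm{RA}}\Phi^{-1}\Phi S \cong S^{\mathrm{RA}} S$ compatibly with the unit. To make the cones functorial, I would work at the level of Fourier--Mukai kernels or dg-enhancements, as in the framework of \cite{AL} used for \cref{thm intro 3}. Since $\Phi_G$ is itself of Fourier--Mukai type, this causes no difficulty.

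\textbf{Step 3: conclude.} Apply this with $S = F$ and $\Phi = \Phi_G$. By \cref{thm intro 3}, $F$ is spherical, so $\cT_F \cong \Rderived{\Hom_{R\#G}}(\ker p,-)$ and $\cC_F$ are both equivalences. Hence $\cT_G = \Phi_G \cdot \cT_F \cdot \Phi_G^{-1}$ and $\cC_{F_G} \cong \cC_F$ are equivalences, so $F_G$ is spherical with the stated twist.

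\textbf{Main obstacle.} The one point needing care is the well-definedness of the cones in Step 2, which must be taken in an enhancement so that the twist is functorial. I would handle it by noting that both $F$ (a tensor functor with a bimodule) and $\Phi_G$ (a Fourier--Mukai transform) lift to dg-functors. The cone of the composed counit is then the conjugate of the cone of the original counit, and the isomorphisms above hold at the enhanced level, hence on homotopy categories.
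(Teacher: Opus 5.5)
Your proposal is correct and matches the paper's proof, which obtains the statement by combining Theorem~\ref{spherical RG} with Lemma~\ref{spherical composed with equiv}. That lemma is exactly your Steps~1--2: the counit of $(E\cdot S, R\cdot E^{-1})$ is $E\epsilon_{E^{-1}}$ and the unit is unchanged, so the twist is conjugated by $E$ and the cotwist stays the same. Your remark on realising the cones at the dg/Fourier--Mukai level is a little more careful than the paper, which works with the informal triangle definition and defers the rigorous treatment to \cite{Go}.
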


\subsection{Examples}

Let $n>3$ be an odd number, $\epsilon_n$ a primitive $n$th root of unity and consider the group
\[ G = \left \langle \begin{pmatrix} \epsilon_n & 0 & 0 \\ 0 & \epsilon_n & 0 \\ 0 & 0 & \epsilon_n^{n -2} \end{pmatrix}\right \rangle \]
acting on $R = \C[x, y, z]$ as $g * f(x, y, z) = f \left((x, y, z) g  \right)$. Then, in \S \ref{sec: example}, we specify precisely when idempotent quotients of $R\#G$ have finite global dimension and, thus, induce derived autoequivalences of $R\#G$. We conclude that section by explicitly computing all idempotent quotients which induce derived autoequivalences of $R\#G$ for the case $n = 7$. This result can be thought of as a statement that the categories $\Db(\modu R\#G)$ and $\Db \left(G \text{-} \Hilb(\C^3) \right)$ have large autoequivalence groups.

\subsection{Further connections to the literature}

Let $f \colon X \to \spec R$ be a crepant contraction (see e.g. \ \cite[2.1]{DW4} for the precise technical definition of contraction) with fibres of dimension at most one. Assume further that $R$ is a local isolated hypersurface singularity. Then, there is an $R$-module $R \oplus M$ and derived equivalence $\Phi_M \colon \Db(\coh X) \to \Db(\modu A)$, where $A := \End_R(R \oplus M)$ \cite[3.2.10]{VdB04}. In \cite{DW1, DW2}, Donovan and Wemyss study the  \textit{noncommutative twist} functor
\[ \cT' \colon \Rderived{\Hom_A}(Ae_0A, -) \colon \Db(\modu A) \to \Db(\modu A). \]
The authors show that it is an equivalence by proving that it is a composition of flopping equivalences. 

 \Cref{thm intro 2} recovers this result in the following way. To the contraction $f \colon X \to \spec R$, Donovan and Wemyss \cite[2.9]{DW1} associate an invariant called the \textit{contraction algebra},  which encodes much of the information associated to the contraction. 

Let $e_0$ be idempotent in $A$ corresponding to the projection $R \oplus M \to R$. Then, the contraction algebra is the idempotent quotient $A_\con := A/Ae_0A$. As will discussed in \ref{nc twist first ex}, the natural surjection $p \colon A \to A_\con$ satisfies \cref{setup intro}, and so it follows that the functor $\cT'$ is an equivalence. 

Hence, in this setting, \cref{thm intro 2}  and the local-to-global technology in \S \ref{new derived equivalences skew} as well as \cite{Go} suggest that, from the point of view of constructing derived equivalences, there might be other interesting quotients of $A$ to study. 

\subsection*{Acknowledgements} This work was carried out as a part of the author's PhD thesis, and the author would like to thank her supervisor Michael Wemyss for his helpful guidance. Moreover, many thanks goes to Ed Segal and Yukari Ito helpful suggestions which improved this work.

\subsection*{Funding} The author was partially supported by ERC Consolidator Grant 101001227 (MMiMMa).

\subsection*{Open access} For the purpose of open access, the author has applied a Creative Commons Attribution (CC:BY) licence to any Author Accepted Manuscript version arising from this submission.

\section{Conventions and summary of known results} \label{derived functors}

\subsection{Conventions}

Throughout this paper, assume that $K$ is a field. Modules will, by convention be right modules. We will write $\Mod R$ for the category of modules over the ring $R$, $\modu R$ for the category of finitely generated modules, $\proj R$ for the category of finitely generated projective modules, and $\fl R$ for the category of finite length modules. We will write $\Kcat(R) \colonequals \Kcat(\Mod R)$ and $\Dcat(R) \colonequals \Dcat(\Mod R)$ for the homotopy and derived categories of $R$, respectively.

An $R$-$S$-bimodule $L$ will sometimes be denoted ${}_R L {}_S$ and will by convention be a left $R$-module and a right $S$-module. We will often refer to an $R$-$S$-bimodule as an $S \otimes_\Z R^\op$-module. Consider bimodules ${}_R L {}_S$, ${}_R M {}_S$, and ${}_S N {}_R$. We will sometimes abbreviate the Hom-set $\Hom_S( {}_R L {}_S, {}_R M {}_S)$ as $\Hom_S( {}_R L, {}_R M)$. Similarly, the tensor product ${}_R L {}_S \, \otimes_S \, {}_S M {}_T$ will often be written as ${}_R L \otimes_S M {}_T$.

Given an $R$-module $M$, we will often denote a projective resolution 
\[ \ldots Q^{(i)} \tow{q^{(i)}} Q^{(i-1)} \to \ldots \tow{q^{(1)}} Q^{(0)} \tow{q^{0}} M \to 0 \]
as $q \colon Q \to M$.

Let $\cA$ be a category. If $f$ and $g$ are morphisms in $\cA$, then we will denote the composition "$g$ then $f$" as $f \cdot g$. Let $Q$ be a quiver and suppose $\alpha$ and $\beta$ are composable arrows in $Q$. We will denote the composition "$\alpha$ then $\beta$" as $\alpha \beta$.

Finally, given a triangulated category $\cA$ with suspension functor $\Sigma$, we will often abbreviate a distinguished triangle
\[ a \to b \to c \to \Sigma a \]
as
\[a \to b \to c \to^+. \]

\subsection{Derived functors}

A central piece of technology used in this paper is the derived hom and tensor product functors. Hence, to set conventions, we briefly recall in this section some standard constructions related to these functors. To fix notation, let $A$ and $B$ be rings throughout.

The derived hom and tensor product functors are defined via the hom-complex and the total tensor product complex, as well as the notions of K-injective and K-flat resolutions. 
    
    \begin{definition} \label{hom-cplx}
    The Hom-complex functor 
\[ \Hom_A^*(-, -) \colon \Kcat(A \otimes_\Z B^\op)^\op \times \Kcat(A) \to \Kcat(B) \]
 sends complexes $L \in \Kcat(A \otimes_\Z B^\op)$ and $M \in \Kcat(A)$ to the complex $\Hom_A^*(L, M)$ which, at degree $k$, is given by the $B$-module
\[ \Hom_A^*(L, M)^k = \prod_{p + q = k} \Hom_A(L^{-p}, M^q). \]
The differential is defined by the rule
\[ d^k(f) = d_M \cdot f + (-1)^{k+1} f \cdot d_L \]
for all $f \in  \Hom_A^*(L, M)^k$.
    \end{definition}

\begin{definition} \label{tot tensor}
     Similarly, there is a tensor product functor
\[ \tot(- \otimes_B -) \colon \Kcat(B) \times \Kcat(A \otimes_\Z B^\op) \to \Kcat(A) \]
sending complexes $L \in \Kcat(B)$ and $M \in \Kcat(A \otimes_\Z B^\op)$ to the complex $\tot(L \otimes_B M)$ which, at degree $k$, is given by the $A$-module
\[ \tot(L \otimes_B M)^k = \bigoplus_{p+q=k} L^p \otimes_B M^q. \]
with differential at $k$ defined as
\[ d^k = \sum_{p+q =k} d_L^p \otimes 1_{M^q} + (-1)^p 1_{L^p} \otimes d_M^q \]
\end{definition}

\begin{definition} Let $\cA$ be an abelian category. Recall further that  
    \begin{enumerate}
        \item A complex $I$ over $\cA$ is \textit{K-injective} if $\Hom_{\Kcat(\cA)}(C, I) = 0$  for every acyclic complex $C$. 
        \item A complex $P$ over $\cA$ is \textit{K-projective} if  $\Hom_{\Kcat(\cA)}(P, C) = 0$ for every acyclic complex $C$.
        \item A complex $Q$ of $A^\op$-modules is \textit{K-flat} if for every acyclic complex $C$ of right $A$-modules, the total complex $\tot(C \otimes_A Q)$ is acyclic.
    \end{enumerate}
\end{definition}

Given a complex $M \in \Dcat(\cA)$, then a \textit{K-injective resolution} of $M$ is a chain map $M \to I$ which is a quasi-isomorphism and where $I$ is a K-injective complex. Similarly, given a complex $M$ of $A$-modules, then a K-flat (resp.\ K-projective) resolution of $M$ is a a chain map $P \to M$ which is a quasi-isomorphism and where $P$ is a K-flat (resp.\ K-projective) complex.

\begin{definition} \label{derived hom def}
    For complexes $M \in \Dcat(A \otimes_\Z B^\op)$ and $N \in \Dcat(\Mod A)$, let $N \to I$ be a K-injective resolution. Then
    \[ \Rderived{\Hom_A}(M, N) \colonequals \Hom^*_A(M, I) \in \Dcat(B). \]
    Further, the complex $\Rderived{\Hom_A}(M, N)$ does not depend (up to quasi-isomorphism) on the choice of K-injective resolution.
\end{definition}

\medskip
Let $P \to M$ be a K-projective resolution of $M$. Then, it is a classical fact that there is an quasi-isomorphism
\[ \Rderived{\Hom_A}(M, N) \cong \Hom^*_A(Q, N) \]
which is natural in $M$ and $N$.

\begin{definition}  \label{derived tensor def}
    For complexes $M \in \Dcat(\Mod A)$ and $N \in \Dcat(\Mod B \otimes_\Z A^\op)$, let $P \to N$ be a resolution in $\Dcat(\Mod B \otimes_\Z A^\op)$ which is K-flat as a complex of $B^\op$-modules. Then
    \[ M \Dtensor_A N \colonequals \tot(M \otimes_A P) \in \Dcat(B). \] 
\end{definition}

Finally, recall that a complex $M$ of $A$-$B$-bimodules is \textit{biperfect} if both $M \in \Kb(\proj A^\op)$ and $M \in \Kb(\proj B)$.

\subsection{Self-injective algebras} \label{app: self inj}

This section recalls some classic results about self-injective algebras. 
    
An algebra $A$ is said to be \textit{self-injective} if it is injective as a module over itself. When $A$ is finite dimensional then self-injectivity is equivalent to the property that finitely generated indecomposable projective modules coincide with finitely generated indecomposable injectives \cite[IV 3.7]{FroAlg}.
    
Consider a finite-dimensional algebra $A$ viewed as the path-algebra over $K$ of a quiver $Q$ with relations $I$. Then, self-injectivity of $A$ can be thought of as a symmetry condition on $(Q, I)$, since we want the class of indecomposable injective modules to match the class of indecomposable projective modules.

    \begin{notation}
        Throughout, given a quiver $Q$ with relations $I$ and a vertex $i$, let $e_i$ denote the trivial idempotent at vertex $i$. Write $P(i) = e_i KQ/I$ and $I(i) = \Hom_K(e_i KQ/I, K)$. Finally, let $S(i)$ denote the simple module at vertex $i$. 
        
        Then, If $Q_0$ is the set of vertices of $Q$, then the complete set of indecomposable projective, injective and simple $KQ/I$-modules are $\{ P(i) \mid i \in Q_0\}$ and $\{ I(i) \mid i \in Q_0\}$, $\{ S(i) \mid i \in Q_0\}$, respectively.
    \end{notation}

    \begin{example} \label{self inj examples} Let $Q_1$ be the quiver
        \[
        \begin{tikzcd}
            Q_1: & {}& 0 \arrow[dr, "x_0"] & {} \\
            {} & 2 \arrow[ur, "x_2"] & {} & 1 \arrow[ll, "x_1"]
        \end{tikzcd}
        \]
        Consider $Q_1$ with relations $I_1 = \langle x_0 x_1 x_2, x_1 x_2 x_0, x_2 x_0 x_1 \rangle$. Then, the indecomposable projective and injective representations are, up to isomorphism,
        
        \[ \begin{tikzcd}[sep=small]
            I(2) \cong P(0): &[-2em] {}& K \arrow[dr, "1"] & {} \\
            {} & K \arrow[ur, "0"] & {} & K \arrow[ll, "1"]
        \end{tikzcd}
        \begin{tikzcd}[sep=small]
           I(0) \cong P(1): &[-2em] {} & K \arrow[dr, "0"] & {} \\
           {} &  K \arrow[ur, "1"] & {} & K \arrow[ll, "1"]
        \end{tikzcd}
        \begin{tikzcd}[sep=small]
            I(1) \cong P(2): &[-2em] {}& K \arrow[dr, "1"] & {} \\
            {} & K \arrow[ur, "1"] & {} & K. \arrow[ll, "0"]
        \end{tikzcd}
        \]
        Thus, the path algebra of $Q_1$ subject to relations $I_1$ is self-injective. 
    \end{example}
    
    \begin{definition} \label{Nakayama perm}
        Let $A$ be a finite-dimensional self-injective algebra. Then, by \cite[IV 3.7]{FroAlg}, $P(i)$ is also an indecomposable injective $A$-module. It follows that its socle
        \[ \soc P(i) = \sum_{N \trianglelefteq P(i) \mid N \text{is simple} } N \]
        is a simple module which, therefore, must be $S(j)$ for some $j \in Q_0$. The permutation of $Q_0$ defined by $\sigma \colon i \mapsto j$ is called the \textit{Nakayama permutation} of $A$.
    \end{definition}
        
    \begin{example}
    Consider the example \ref{self inj examples}. Then, since $P(0) \cong I(2)$, the socle of $P(0)$ is $S(2)$. That is, $\sigma(0) = 2$. Similarly, $\sigma(1) = 0$ and $\sigma(2) = 1$. 
    \end{example}
    
    When $A$ is is self-injective and finite dimensional, then the category $\modu A$ of finitely generated modules over $A$ is equipped with an automorphism 
    \[ \cN \colonequals - \otimes_A D A \colon \modu A \to \modu A \]
    called the \textit{Nakayama functor}. Here, the dual $D A = \Hom_K(A, K)$ is equipped with its usual bimodule structure: for $f \in D(A)$ and $x \in A$, $(a \cdot f \cdot b) (x) = f(bxa)$. 
    
    Since for finite dimensional algebras finitely generated indecomposable projective modules coincide with finitely generated indecomposable injectives it follows that $\cN$ is exact and extends to an automorphism of $\Db(\modu A)$.    

\newpage
\subsection{Twist and cotwist around ring morphisms} \label{twist res scalars}

\subsubsection{Setting}

Throughout, unless mentioned otherwise, let $p \colon A \to B$ be a ring morphism, and assume that $B$ is perfect as an $A$-module. Then, the morphism $p \colon A \to B$ induces the restriction-extension of scalars adjoint pairs $(F^\LA, F)$, $(F, F^\RA)$ on the derived category, as highlighted below. 

\begin{equation} \label{res-ext diagram}
    \begin{tikzcd}[column sep=10em]
        \Dcat(B) \arrow[rr, "F  = - \otimes^{\Lderived{}} {}_{B}  B {}_A \cong \Rderived{\Hom_{B}}{({}_A B {}_{B}, -)}" description, ""{name=F,above}] & {} & \arrow[ll, "F^{\mathrm{RA}} = \Rderived{\Hom_A}{( {}_{B} B {}_A, -)}"{name=RA}, bend left=12] \arrow[ll, "F^{\mathrm{LA}}  = - \otimes^{\Lderived{}} {}_A B {}_{B}"{name=LA}, swap, bend right=12] \Dcat(A)
        \arrow[phantom, from=LA, to=F, "\scriptstyle\boldsymbol{\perp}"description]
        \arrow[phantom, from=F, to=RA, "\scriptstyle\boldsymbol{\perp}"description]
    \end{tikzcd}
\end{equation}

In what follows, we are interested in studying two closely related automorphisms of $\Dcat(B)$ and $\Dcat(A)$ known as the twist and cotwist around the restriction of scalars functor $F$.

\begin{definition} \label{twist def} Following \cite{An, AL}, let $S \colon \cA \to \cB$ be a functor between triangulated categories which admits a right adjoint $R$ and a left adjoint $L$. Then,

\begin{enumerate}
    \item The \textit{twist} around $S$ is a functor $T$ which sits in a functorial triangle
    \begin{align} \label{twist triang in D}
       SR \tow{\epsilon^\rmR} 1_{\cB} \tow{\alpha_1} T \tow{\alpha_2}^+  
    \end{align}
   where $\epsilon^\rmR \colon SR \to 1_{\cB}$ is the counit of the adjunction $(S, R)$.
        \item The \textit{cotwist} around $S$ is a functor $C$  which sits in a functorial triangle
    \begin{align} \label{cotwist triang in D}
       C \tow{\beta_1} 1_{\cA} \tow{\eta^\rmR} RS \tow{\beta_2}^+  
    \end{align}
     where $\eta^\rmR \colon 1_{\cA} \to RS$ is the unit of the adjunction $(S, R)$.
\end{enumerate}
\end{definition}

\begin{remark} \label{funct cones}
    Definition \ref{twist def} is adapted from the definition in \cite{AL}. Since taking cones in triangulated categories is not functorial, it is not clear, without any additional structure on the triangulated categories $\cA$ and $\cB$, whether the functors $T$ and $C$ exist or whether they are uniquely defined (even up to isomorphism). However, this paper concerns bimodules and so taking cones is sensible in our context. In the remainder of this remark, we will specify more precisely why "taking cones is sensible in our context". Mostly, this discussion is for the reader who would like a more rigorous treatment of twists around a functors.
    
    Let $\cA$ be the DG-category with one object $x$ and with morphisms defined by $\cA(x, x) \colonequals A$, with $A$  seen as a DG-algebra concentrated in degree zero. Then, a DG-module over $\cA$ corresponds to a chain complex of $A$-modules, and so we can easily check that $\cA$ is a Morita enhancement of $\Dcat(\Mod A)$. We may define a similar enhancement for $B$ and $A \otimes_\Z B^\op$. 

    Next, consider the DG-bimodules $s \colonequals {}_B B {}_A$, $r = \Rderived{\Hom_A}({}_B B, {}_A A)$ and $l = {}_A B {}_B$. Then, the underlying exact functors are $F$, $F^{\RA}$, and $F^{\LA}$ which induce the standard restriction-extension of scalars adjunction \eqref{res-ext diagram}. 
    
    Following the definitions in \cite[\S 5]{AL}, the twist and cotwist around $s$ are the DG-bimodules
\begin{align*}
    &  t = \cone(\epsilon^\mathrm{R}_A \colon \Rderived{\Hom_A}({}_B B, {}_A B) \Dtensor_B {}_B B {}_A \to {}_A A {}_A ) \\
    &  c = \cone(\eta^{\mathrm{R}}_B \colon {}_B B {}_B \to\Rderived{\Hom_A}({}_B B, {}_B B \Dtensor_B B {}_A ) )[-1]
\end{align*}
where $\epsilon^\mathrm{R}_A$ and $\eta^{\mathrm{R}}_B$ are the counit and unit of the adjunction $(-) \Dtensor_B {}_B B {}_A \dashv \Rderived{\Hom_A}({}_B B, -)$ evaluated at the bimodules ${}_A A {}_A$ and ${}_B B {}_B$, respectively. Thus, the twist and cotwist around $F$ are $ T =  (-) \Dtensor_A t$ and $ C =  (-) \Dtensor_B c$, respectively. 
\end{remark}

\bigskip
The intuition that the twist and cotwist are closely related can be made precise by the "2 out of 4" lemma. 

\begin{lemma}[{\cite[5.1]{AL}}] \label{spherical criteria}
    Let $S \colon \Dcat(\Mod B) \to \Dcat(\Mod A)$ be as in \ref{twist def}. If any two of the following conditions hold, then the other two also hold. 
\begin{enumerate}
    \item The twist $T \colon \Dcat(\Mod A) \to \Dcat(\Mod A)$ is an equivalence,
    \item The cotwist $C \colon \Dcat(\Mod B) \to \Dcat(\Mod B)$ is an equivalence,
    \item With $\alpha_2$ as in \eqref{twist triang in D}, there is a natural isomorphism
    \[LT[-1] \tow{L(\alpha_2[-1])} LSR \tow{\epsilon^\rmR_R} R \]
    \item \label{iso c relates adjoints} With $\beta_2$ as in \eqref{cotwist triang in D} and letting $\eta^\rmL$ denote the unit of the adjunction $(L, S)$, there is a natural isomorphism
    \[ R \tow{\eta^\rmL_R} RSL \tow{(\beta_2)_L} CL[1] \]
\end{enumerate}
\end{lemma}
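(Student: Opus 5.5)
Since the statement is \cite[5.1]{AL}, the plan is to reproduce the Anno--Logvinenko argument in the enhanced setting of \cref{funct cones}. There every functor in sight is $(-)\Dtensor$ of a DG-bimodule, so cones of natural transformations are functorial and well defined. Concretely, $S$, $R$, $L$, $T$, $C$ are given by bimodules $s, r, l, t, c$. The triangles \eqref{twist triang in D} and \eqref{cotwist triang in D} are then triangles of bimodules, and natural transformations are bimodule maps. In this setting we can also write down the adjoints of the twist and cotwist as cones:
\begin{itemize}
\item the left adjoint $T^{\mathrm{L}}$ of $T$ sits in a triangle $T^{\mathrm{L}} \to 1_{\cB} \tow{\eta^{\mathrm{L}}} SL \to^+$;
\item the right adjoint $C^{\mathrm{R}}$ of $C$ sits in a triangle $LS \tow{\epsilon^{\mathrm{L}}} 1_{\cA} \to C^{\mathrm{R}} \to^+$.
\end{itemize}
Both follow by taking adjoints termwise in the defining triangles, which is legitimate at the bimodule level. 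The whole proof then reduces to comparing compositions of these cones.

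The first key step is a set of ``intertwining'' isomorphisms. Apply the twist triangle to $S$ and the cotwist triangle to $S$ and $R$. By the triangle identity $\epsilon^{\mathrm{R}}_S \cdot S\eta^{\mathrm{R}} = 1_S$, the map $S\eta^{\mathrm{R}}$ is split. The octahedral axiom (performed on bimodules, so the result is natural) then gives
\[ TS \cong SC[2], \qquad RT \cong CR[2]. \]
Dually, the left adjoints satisfy $LT \cong C L[2]$ up to the appropriate twist and shift. Next I would identify conditions (3) and (4) with statements about these adjoints:
\begin{itemize}
\item (4) says exactly that $R \cong CL[1]$ via the specified composite, i.e.\ $C$ intertwines the two adjoints of $S$;
\item (3) says $LT[-1] \cong R$, i.e.\ $T$ intertwines them.
\end{itemize}

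The second step is to compute the compositions $TT^{\mathrm{L}}$, $T^{\mathrm{L}}T$, $CC^{\mathrm{R}}$ and $C^{\mathrm{R}}C$ via iterated octahedra. Each is an extension of $1$ by a term involving $SRSL$, $SL$, $SR$ (respectively $RSLS$, $LS$, $RS$). The correction term vanishes precisely when the relevant map among these is an isomorphism, and those maps are built from the composites in (3) and (4). From this one obtains:
\begin{itemize}
\item (3) together with (4) forces $T$ and $C$ to have inverse adjoints, giving (1) and (2);
\item (1) and (2) give (3) and (4) by using $T^{\mathrm{L}} = T^{-1}$ and $C^{\mathrm{R}} = C^{-1}$ in the intertwining isomorphisms;
\item the mixed cases, e.g.\ (1)+(3) $\Rightarrow$ (4) or (2)+(4) $\Rightarrow$ (3), follow by conjugating the isomorphism $R \cong LT[-1]$ through $TS \cong SC[2]$, and symmetrically.
\end{itemize}

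I expect the main obstacle to be bookkeeping rather than concept. The isomorphisms must be the specific natural maps written in (3) and (4), not merely abstract isomorphisms, so one must track that the octahedra produce exactly $\epsilon^{\mathrm{R}}_R \cdot L(\alpha_2[-1])$ and $(\beta_2)_L \cdot \eta^{\mathrm{L}}_R$. One must also check that the connecting morphisms compose correctly; this is where the DG enhancement, and the fact that our functors are given by bimodules, is essential. I would handle this by working with the twisted complexes (convolutions) of \cite[\S 5]{AL} representing $TT^{\mathrm{L}}$ and $CC^{\mathrm{R}}$, and verifying the relevant homotopies there.
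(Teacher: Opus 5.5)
The paper does not prove this statement; it quotes \cite[5.1]{AL}. Your outline has the right shape for (3)+(4)$\Rightarrow$(1)+(2), but the case (1)+(2)$\Rightarrow$(3),(4) is not established.

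\textbf{The main gap.} You propose to substitute $T^{\mathrm L}=T^{-1}$ and $C^{\mathrm L}=C^{-1}$ into the intertwining isomorphisms, but this cannot work. The isomorphisms $TS\cong SC[2]$, $RT\cong CR[2]$ and their left-adjoint versions $T^{\mathrm L}S\cong SC^{\mathrm L}[-2]$, $LT^{\mathrm L}\cong C^{\mathrm L}L[-2]$ only relate $S$ to $S$, $R$ to $R$ and $L$ to $L$. They never compare $R$ with a functor built from $L$, and they cannot show that the specific composites in (3) and (4) are invertible.

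Your second step does not supply this either. Write $\phi\colon LT[-1]\to R$ for the composite in (3). Map the triangle $T^{\mathrm L}T\to T\to SLT\to^+$ to $1\tow{\alpha_1}T\tow{\alpha_2}SR[1]\to^+$ via $\mathrm{id}_T$ and $S\phi[1]$. The square commutes by naturality of $\eta^{\mathrm L}$ and the triangle identity $S\epsilon^{\mathrm L}\cdot\eta^{\mathrm L}S=1_S$. This shows that $T$ is fully faithful iff $S\phi$ is an isomorphism. Likewise $C$ is fully faithful iff $\phi S$ is, and the same holds for the composite in (4) with $T^{\mathrm L}$ and $C^{\mathrm L}$. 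So the correction terms are (shifts of) $S\,\cone(\phi)$ and $\cone(\phi)\,S$. They cannot be $\cone(\phi)$ itself, which is a functor $\cB\to\cA$. Hence (1) and (2) only tell you that $S\phi$ and $\phi S$ are invertible, and passing from there to $\phi$ is the real content of this case.

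One argument that closes it:
\begin{enumerate}
\item The octahedron on the factorisation $LT[-1]\to LSR\to R$ of $\phi$ gives a triangle $L\to\cone(\phi)\to C^{\mathrm L}R\to^+$.
\item Since $S\,\cone(\phi)=0$ and $\Hom(Lb,a)\cong\Hom(b,Sa)$, the first map vanishes. Hence $C^{\mathrm L}R\cong\cone(\phi)\oplus L[1]$.
\item Apply $C=(C^{\mathrm L})^{-1}$, and note that $C\cong 1$ on $\ker S$ because $RS$ vanishes there. This exhibits $\cone(\phi)b$ as a direct summand of $Rb$.
\item But $\Hom(a,Rb)\cong\Hom(Sa,b)=0$ for $a\in\ker S$, so $\cone(\phi)=0$.
\end{enumerate}
This uses only that $T$ is fully faithful and $C$ is an equivalence. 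Condition (4) then follows as in the mixed cases below.

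\textbf{Smaller inaccuracies.}
\begin{itemize}
\item $\cone(LS\tow{\epsilon^{\mathrm L}}1_{\cA})$ is the \emph{left} adjoint of $C$, not its right adjoint. Indeed $LS\dashv RS$, whereas a right adjoint of $C$ would involve a right adjoint of $R$.
\item The isomorphism $LT\cong CL[2]$ is false in general. For $S$ the functor $V\mapsto V\otimes\mathcal O$ into $\Db(\coh\mathbb P^1)$, one has $C=0$ while $LT(\mathcal O(-1))\neq0$. It is a consequence of sphericity, not an input.
\item The mixed cases do not follow by conjugating through $TS\cong SC[2]$. From (1)+(3) that only yields $L\cong C^{\mathrm L}R[-1]$, which is (4) only once $C$ is already known to be an equivalence. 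What works is the octahedra on the factorisations of the maps in (3) and (4). These show that (3) holds iff the induced map $C^{\mathrm L}R\to L[1]$ is invertible, and (4) holds iff the induced map $L\to RT^{\mathrm L}[1]$ is. Under (2), respectively (1), conditions (3) and (4) therefore become equivalent, once you check that these induced maps are compatible with the composite in (4), respectively with $\phi$.
\end{itemize}
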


\begin{definition}[{\cite[1.1]{AL}}] \label{spherical def}
    A functor $S \colon \Dcat(\Mod B) \to \Dcat(\Mod A)$ as above is \textit{spherical} if any two of the conditions in \ref{spherical criteria} hold. When $S$ is spherical, $T$ is called a \textit{spherical twist}.
\end{definition}

In practice, it is hard to check whether the specific natural transformation in \ref{spherical criteria} \eqref{iso c relates adjoints} is an isomorphism, and so the following result will be key. 

\begin{theorem}[{\cite[1]{add}, see also \cite{An} and \cite{Rou}}] \label{sph criterion add}
       With notation as above, if the cotwist $C \colon \Dcat(\Mod B) \to \Dcat(\Mod B)$ is an equivalence, and if there exists an isomorphism of functors $R \cong CL[1]$, then $S$ is spherical. 
\end{theorem}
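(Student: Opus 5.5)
The plan is to upgrade the abstract isomorphism $R \cong CL[1]$ to the statement that the \emph{canonical} natural transformation
\[ \psi \colon R \tow{\eta^\rmL_R} RSL \tow{(\beta_2)_L} CL[1] \]
is an isomorphism. Then conditions (2) and (4) of \cref{spherical criteria} both hold, and the 2-out-of-4 lemma shows that $S$ is spherical. Throughout I will work with the DG-enhancements of \cref{funct cones}, so that all cones are functorial and all functors are given by bimodules. Each natural transformation is then a morphism of bimodules, and "isomorphism" can be checked on the underlying bimodules.

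The first step uses the fact that $C$ is an equivalence. Since $C$ is an autoequivalence, $C^{-1}$ is both its left and right adjoint, so $CL[1]$ is right adjoint to $S' \colonequals S C^{-1}[-1]$. Fixing an abstract isomorphism $\phi \colon R \tow{\sim} CL[1]$, both $R$ and $CL[1]$ become right adjoints of $S$. Uniqueness of adjoints then identifies every natural transformation $R \to CL[1]$ with an element of $\End(S)$, or equivalently of $\End(R)$ via $\phi$, by Yoneda applied to the adjunction. In particular $\psi = \phi \cdot \theta$ for a unique $\theta \in \End(R)$, and it suffices to show that $\theta$ is invertible.

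The second step, which I expect to be the main obstacle, is proving that $\theta$ is invertible. My first attempt will be a "two-sided inverse up to a unit" argument. Dually to $\psi$, I will build a canonical map $\psi' \colon CL[1] \to R$ from the counit of $(L,S)$ and the triangle $LT[-1] \to LSR \to R$ defining condition (3). I will then compute both $\psi' \cdot \psi$ and $\psi \cdot \psi'$ using the octahedral axiom on the triangles
\[ SR \to 1 \to T, \qquad C \to 1 \to RS \]
applied to the objects $L$ and $R$. The aim is to show that each composite is the identity plus a map which factors through $C$ or $T$ in a nilpotent way, so that the composite is an isomorphism.

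If that bookkeeping becomes intractable, the fallback is Addington's trick. I would show that $T$ has a left adjoint $T' = \cone(1 \to SL)[-1]$. I would then use $SR \cong SCL[1]$, together with the triangle $SCL \to SL \to SRSL$ obtained by applying $S(-)L$ to the cotwist triangle, to exhibit $T'T \cong 1$ and $TT' \cong 1$. Here the abstract isomorphism $\phi$ is used only to identify terms, and naturality is supplied by the functorial cones. This would give conditions (1) and (2) of \cref{spherical criteria} directly.

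In either route, the final step is to invoke \cref{spherical criteria} with two of its conditions verified, which shows that $S$ is spherical in the sense of \cref{spherical def}.
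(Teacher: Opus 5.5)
The paper gives no proof of this statement; it quotes it from \cite{add}. Your overall plan is the right shape: verify two of the four conditions of \cref{spherical criteria} and then apply it. However, neither of your routes has a step where the hypothesis $R\cong CL[1]$ actually does any work, and that is the entire content of the statement.

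\textbf{Route 1 does not use the hypothesis.} Your first step is a tautology: for \emph{any} $\psi$ one has $\psi=\phi\cdot(\phi^{-1}\cdot\psi)$, so ``$\theta$ is invertible'' merely restates ``$\psi$ is invertible''. (Also, $CL[1]$ is \emph{left} adjoint to $SC^{-1}[-1]$, not right adjoint.) Your first attempt at the second step then manipulates only canonical maps: $\psi$, a canonical $\psi'$, and octahedra on the twist and cotwist triangles. If it succeeded, it would prove that condition (2) alone implies (4), and that is false. Take $S=-\otimes_K E$ with $\Hom^\bullet(E,E)\cong K\oplus K[-n]$ on a smooth projective $X$. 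The cotwist $C\cong[-n-1]$ is an equivalence, but $S$ is not spherical unless $E\otimes\omega_X[\dim X]\cong E[n]$. Nothing supports the ``identity plus a nilpotent map'' claim.

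\textbf{The fallback has the same hole.} Identifying $SR$ with $SCL[1]$ via $\phi$ only transports the maps in the cones defining $TT'$ and $T'T$. It says nothing about whether the transported map is one whose cone is what you need. Concretely, $RT'[1]=\cone(R\eta^\rmL)$ sits in a triangle $RT'[1]\to\cone(\psi)\to L[1]\to^+$, from the octahedron on $\psi=(\beta_2)_L\cdot R\eta^\rmL$. Finishing the computation of $TT'$ is exactly where you need $S\cone(\psi)=0$, which is what you set out to prove.

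\textbf{The missing idea.} Use $\phi$ only through a property that does not depend on which isomorphism is chosen. Since $C$ is an equivalence, $R\cong CL[1]$ gives $\ker R=\ker L$. Then prove condition (1) by a spanning-class argument with $\Omega=\Img S\cup\ker R$:
\begin{itemize}
\item $\Omega$ is spanning; one direction uses $\ker L\subseteq\ker R$.
\item $T$ is the identity on $\ker R$.
\item $TS\cong SC[2]$ and $RT\cong CR[2]$ canonically, because $\epsilon^\rmR_S$ and $R\epsilon^\rmR$ are split by $S\eta^\rmR$ and $\eta^\rmR_R$.
\item The mixed Hom-spaces vanish on both sides, by $\ker R=\ker L$.
\item The pair $(Sa,Sa')$ reduces, through these canonical isomorphisms, to full faithfulness of $C$. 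This compatibility check, not $\phi$, is where canonical maps matter.
\end{itemize}
Bridgeland's criterion applies because $T$ has both adjoints, so $T$ is fully faithful. Moreover $(\Img T)^\perp=0$: since $\Hom(TSa,b)\cong\Hom(Ca[2],Rb)$ and $C$ is essentially surjective, such $b$ lies in $\ker R$, and then $\Hom(b,b)\cong\Hom(Tb,b)=0$. A fully faithful functor with a right adjoint and trivial right orthogonal is an equivalence, so (1) holds. Together with (2), \cref{spherical criteria} gives that $S$ is spherical.
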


\subsubsection{Specifying the twist and cotwist}
The twist and cotwist around $F$ are characterised in general in \cite{Go}. 

\begin{theorem}[{\cite[3.8,3.9]{Go}}] \label{twist for epi}
    The twist around $F$ is
    \begin{align*}
        \cT & = \Rderived{\Hom}_A({}_A \cone(p)[-1] {}_A, -) \colon \Dcat(A) \to \Dcat(A).
    \end{align*}
    Hence if $p$ is a surjection, then $\cT = \Rderived{\Hom}_A({}_A \ker p {}_A, -) $. 
\end{theorem}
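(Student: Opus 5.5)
We need to prove that the twist around $F$ is $\Rderived{\Hom}_A(\cone(p)[-1], -)$, and that it reduces to $\Rderived{\Hom}_A(\ker p, -)$ when $p$ is surjective.

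We work at the level of bimodules, using the DG enhancement of \cref{funct cones}, so that $\cT = (-)\Dtensor_A t$ with $t = \cone(\epsilon^\rmR_A)$. First we identify $FF^{\RA}$. For $M \in \Dcat(A)$,
\[ FF^{\RA}(M) = \Rderived{\Hom_A}({}_B B {}_A, M) \Dtensor_B {}_B B {}_A . \]
Because ${}_B B_A$ is perfect as a right $A$-module, the tensor–hom exchange gives
\[ \Rderived{\Hom_A}(B, M) \cong M \Dtensor_A \Rderived{\Hom_A}(B_A, A_A). \]
Write $B^\vee := \Rderived{\Hom_A}({}_B B_A, {}_A A_A)$, which is an $A$-$B$-bimodule complex. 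Then $FF^{\RA} \cong (-)\Dtensor_A (B^\vee \Dtensor_B B)$, and the counit corresponds to the evaluation map $B^\vee \Dtensor_B B \to A$.

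It is easier to pass to right adjoints, since $\cT$ is then the right adjoint of the functor we want to identify. Alternatively, we can compute $\cT(M) = \cone(FF^{\RA}M \to M)$ directly in the form $\Rderived{\Hom}$. By perfectness, $(-)\Dtensor_A B^\vee \Dtensor_B B$ can be rewritten as
\[ \Rderived{\Hom_A}(B, -)\Dtensor_B B \cong \Rderived{\Hom_A}\bigl(\Rderived{\Hom_B}(B_A, B_B) \ldots\bigr). \]
The cleanest route is the following identification:
\[ \Rderived{\Hom_A}(B, M) \Dtensor_B B \cong \Rderived{\Hom_A}\bigl(B \Dtensor_B \ldots\bigr). \]
We would prove this carefully as
\[ FF^{\RA}(M) \cong \Rderived{\Hom_A}({}_A B_A, M) , \]
regarding $B$ as an $A$-bimodule via $p$. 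This holds because $\Rderived{\Hom_A}(B,M)$ is a $B$-module, and restricting scalars along $p$ is exactly $F$. Indeed, $F$ is simply restriction, so $F F^{\RA}(M) = \Rderived{\Hom_A}({}_B B_A, M)$ viewed as an $A$-module through the left $B$-action, that is, $\Rderived{\Hom_A}({}_A B_A, M)$.

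Under this identification, the counit $\epsilon^\rmR_M$ is the map
\[ \Rderived{\Hom_A}({}_A B_A, M) \to \Rderived{\Hom_A}({}_A A_A, M) = M \]
induced by precomposition with $p \colon A \to B$. We check this on the underived level: it is evaluation $f \mapsto f(1)$. It is functorial because it comes from the bimodule map $p$. Since $\Rderived{\Hom_A}(-, M)$ is exact and contravariant, applying it to the triangle
\[ \cone(p)[-1] \to A \xrightarrow{p} B \to^+ \]
gives
\[ \Rderived{\Hom_A}(B, M) \to M \to \Rderived{\Hom_A}(\cone(p)[-1], M) \to^+ . \]
This exhibits $\cT(M) \cong \Rderived{\Hom}_A(\cone(p)[-1], M)$, functorially, because the triangle is one of bimodules.

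When $p$ is surjective, $\cone(p)[-1]$ is quasi-isomorphic to $\ker p$ as a bimodule, which gives the second statement. The main obstacle is verifying that the identification of the counit with $p^*$ is compatible on the bimodule level, so that cones are functorial, and not merely objectwise. To handle this, we would do the computation with a K-injective resolution of $M$ and note that everything is natural in $M$.
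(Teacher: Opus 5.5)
The paper does not prove this statement; it imports it from \cite[3.8, 3.9]{Go}, so there is no in-text argument to compare with. Judged on its own, your proof is correct. The key steps all hold:
\begin{enumerate}
\item Since $F$ is restriction of scalars, $FF^{\RA}(M) = \Rderived{\Hom}_A({}_A B_A, M)$.
\item The counit is $\Rderived{\Hom}_A(p, M)$, i.e.\ evaluation at $1$.
\item Applying the contravariant triangulated functor $\Rderived{\Hom}_A(-, M)$ to $\cone(p)[-1] \to A \xrightarrow{p} B \to^+$ and rotating identifies the cone of the counit with $\Rderived{\Hom}_A(\cone(p)[-1], M)$.
\item When $p$ is onto, $\cone(p)[-1] \cong \ker p$ in $\Dcat(A \otimes_\Z A^\op)$, which gives the second claim.
\end{enumerate}

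The ``main obstacle'' you flag goes away if you run the same computation once, at the bimodule level, rather than objectwise. This is also how the twist is actually defined in Remark~\ref{funct cones}. There $\cT = (-) \Dtensor_A t$, where $t$ is the cone of the counit at ${}_A A_A$. That counit is the bimodule map $p^* \colon \Rderived{\Hom}_A({}_A B_A, A) \to \Rderived{\Hom}_A(A, A) = A$. Applying $\Rderived{\Hom}_A(-, A)$ to the bimodule triangle of $p$ gives $t \cong \Rderived{\Hom}_A(\cone(p)[-1], A)$ as bimodule complexes. Since $\cone(p)[-1]$ is perfect as a right $A$-module (because $A$ and $B_A$ are), the tensor--hom exchange you already used for $B$ gives a natural isomorphism $(-) \Dtensor_A \Rderived{\Hom}_A(\cone(p)[-1], A) \cong \Rderived{\Hom}_A(\cone(p)[-1], -)$.

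Note that your objectwise argument never uses the perfectness hypothesis. This is where it enters: it lets the twist be represented by a bimodule, so that taking cones is functorial. Your proposed fix via K-injective resolutions also works, but the bimodule version matches the paper's definition directly.

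Finally, delete the abandoned displays containing ``$\ldots$'' and the sentence about passing to right adjoints; they are not part of the argument. If you did want the adjoint route, it also works. The left adjoint $\cT^{\LA}$ of $\cT$ sits in a triangle $\cT^{\LA} \to 1 \to FF^{\LA} \to^+$, whose second map is $1 \otimes p \colon M \to M \Dtensor_A B$. Hence $\cT^{\LA} \cong (-) \Dtensor_A \cone(p)[-1]$, and $\cT$ is its right adjoint.
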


In order to specify the cotwist, consider a K-projective resolution $Q \to B$ of the bimodule $B \in \Dcat(A \otimes_\Z B^\op)$. Then, it is straightforward to check that the quasi-isomorphism $Q \to B$ induces a quasi-isomorphism
 \[ s \colon {}_B B \Dtensor_A B {}_B \to \tot({}_B Q \otimes_A B {}_B ).\]
 In fact, we may take the resolution $q \colon Q \to {}_B B {}_A$ to be a chain map $(q^j)$ which vanishes in all degrees but zero. In which case, there is a chain map
\[ \beta = (\beta^j) \colon \tot({}_B Q \otimes_A B {}_B) \to {}_B B {}_B \] 
where $\beta^j = 0$ for $j \neq 0$ and $\beta^0$ is induced by composing $q^0 \otimes_A 1_B \colon Q^0 \otimes_A B {}_B \to {}_B B \otimes_A B {}_B$ with the multiplication isomorphism $m \colon  {}_B B \otimes_A B {}_B \to  {}_B B {}_B$. We may thus consider the composition 
\begin{align} \label{beta remark}
     s \cdot \beta \colon {}_B B \Dtensor_A B {}_B \to {}_B B {}_B. 
\end{align}

\begin{theorem} [{\cite[3.15]{Go}}] \label{cotwist gen}
    Let $C(s \cdot \beta)$ denote the cone of $s \cdot \beta \colon {}_B B \Dtensor_A B {}_B \to {}_B B {}_B$ in $\Dcat(B \otimes_\Z B^\op)$. Then, the functor
   \[ \cC = \Rderived{\Hom_B}( {}_B C(s \cdot \beta) {}_B, -) \]
   is the cotwist around $F$.
\end{theorem}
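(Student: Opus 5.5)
The plan is to compute the composite $F^{\RA}F$ explicitly as a derived Hom out of a $B$-bimodule. I would then identify the unit $\eta^{\rmR}$ with precomposition by $s\cdot\beta$, and finally read off the cotwist triangle \eqref{cotwist triang in D} by applying $\Rderived{\Hom_B}(-,M)$ to the triangle defining $C(s\cdot\beta)$. The identification of the unit is the step I expect to be the real obstacle.

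\textbf{Step 1: rewrite $F^{\RA}F$.} For $M\in\Dcat(B)$ we have
\[
F^{\RA}F(M)=\Rderived{\Hom_A}({}_BB{}_A,\ \Rderived{\Hom_B}({}_AB{}_B,M)).
\]
By derived tensor--hom adjunction this is naturally isomorphic to
\[
\Rderived{\Hom_B}({}_BB\Dtensor_AB{}_B,\ M),
\]
as a functor $\Dcat(B)\to\Dcat(B)$. To make this concrete, and compatible with the chain maps of \eqref{beta remark}, I would use the K-projective bimodule resolution $q\colon Q\to {}_BB{}_A$ fixed before the statement. This gives
\[
F^{\RA}F(M)\cong \Hom^*_A(Q,\Hom_B(B,I))\cong \Hom^*_B(\tot(Q\otimes_AB),I)
\]
for a K-injective resolution $M\to I$. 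All identifications here are natural in $M$ and hold at the level of complexes. This chain-level form is what keeps the cones functorial, in the sense of Remark \ref{funct cones}.

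\textbf{Step 2: identify the unit.} I would show that, under the isomorphism of Step 1, the unit
\[
\eta^{\rmR}_M\colon M\to F^{\RA}F(M)
\]
becomes precomposition with $s\cdot\beta\colon {}_BB\Dtensor_AB{}_B\to {}_BB{}_B$, using $\Rderived{\Hom_B}(B,M)\cong M$.

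At the underived level, the unit of restriction--coinduction sends $m$ to the $A$-linear map $b\mapsto mb$. Adjointly, this is the map $B\otimes_AB\to M$ given by $b\otimes b'\mapsto mbb'$, i.e.\ the multiplication $m$ followed by evaluation. On the resolution, this corresponds to the chain map $\beta$, which is $m\cdot(q^0\otimes 1_B)$ in degree zero and vanishes elsewhere. Composing with the quasi-isomorphism $s$ gives the claim.

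The care here is entirely in checking that the adjunction isomorphism of Step 1 is compatible with the chosen resolutions and sign conventions (Definitions \ref{hom-cplx} and \ref{tot tensor}). I would check this degreewise on $\Hom^*$, where it reduces to the underived computation in degree $0$ together with naturality in $Q$.

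\textbf{Step 3: conclude.} In $\Dcat(B\otimes_\Z B^\op)$ there is a distinguished triangle
\[
{}_BB\Dtensor_AB{}_B\xrightarrow{s\cdot\beta}{}_BB{}_B\to C(s\cdot\beta)\to^+ .
\]
Applying the exact functor $\Rderived{\Hom_B}(-,M)$ gives a triangle, functorial in $M$ since it comes from a bimodule triangle:
\[
\Rderived{\Hom_B}(C(s\cdot\beta),M)\to M\xrightarrow{(s\cdot\beta)^*}\Rderived{\Hom_B}(B\Dtensor_AB,M)\to^+ .
\]
By Steps 1 and 2, the middle map is $\eta^{\rmR}_M$, so this is exactly the triangle \eqref{cotwist triang in D}.

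Finally, at the level of DG-bimodules, the cotwist is
\[
c=\cone(\eta^{\rmR}_B)[-1]\cong \cone\bigl((s\cdot\beta)^*\bigr)[-1]\cong \Rderived{\Hom_B}(C(s\cdot\beta),B).
\]
Hence $\cC=\Rderived{\Hom_B}(C(s\cdot\beta),-)$, which is well defined in the sense of Remark \ref{funct cones}.
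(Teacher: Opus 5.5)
Your proof is correct and follows the natural route behind the result this paper quotes from its source: rewrite $F^{\RA}F$ as $\Rderived{\Hom_B}(B \Dtensor_A B, -)$, identify the unit $\eta^{\rmR}$ with precomposition by $s\cdot\beta$, and apply $\Rderived{\Hom_B}(-,M)$ to the triangle defining $C(s\cdot\beta)$. One step is left implicit: in your last line, passing from $c\cong\Rderived{\Hom_B}(C(s\cdot\beta),B)$ to $(-)\Dtensor_B c\cong\Rderived{\Hom_B}(C(s\cdot\beta),-)$ requires $C(s\cdot\beta)$ to be perfect as a right $B$-module, which holds because $B_A\in\Kb(\proj A)$ makes $B\Dtensor_A B$ a right-perfect $B$-module.
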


\section{Spherical twists for Gorenstein orders} \label{symmetric alg}

In this section, we will consider a surjection $p \colon A \to B$ with additional natural assumptions on $A$ and $B$ under which we can prove that the restriction of scalars functor $F$ is spherical. 

\subsection{Setting}

Motivated by geometric settings, we will work within the following setup. 

\begin{setup} \label[setup]{module fin}
    Let $\cR$ be a noetherian local Cohen-Macaulay (CM) ring of Krull dimension $d$ with canonical module $\omega_\cR$ and residue field $K$. Assume that $A$ is a module finite $\cR$-algebra, so the surjection $p \colon A \to B$ implies that $B$ is also module finite over $\cR$.
\end{setup}

   As the algebra $A$ is module finite over $\cR$, its derived category is equipped with the Nakayama functor defined in \cite[\S 3]{IR}, namely 
    \[ - \Dtensor_A \omega_A \colon \Dcat^{-}(\modu A) \to \Dcat^{-}(\modu A) \] 
    where $\omega_A \colonequals \Rderived{\Hom}_\cR(\Lambda, \omega_\cR)$ is a dualizing complex for $\Dcat(\modu A)$ in the sense of \cite{Ye}. Moreover, \cite[3.5,3.7]{IR} proves a form of Serre duality by showing the existence of the following functorial isomorphisms.
    \begin{enumerate} 
        \item For any $a \in \Db(\modu A)$ and $b \in \Kb(\proj A)$
        \begin{align} \label{derived serre duality} 
             \Rderived{\Hom}_A(a, b \Dtensor_A \omega_A) \cong \Rderived{\Hom}_\cR(\Rderived{\Hom}_A(b, a), \omega_\cR).
        \end{align}
        \item For any $a \in \Db(\fl A)$ and $b \in \Kb(\proj A)$
        \begin{align} \label{serre duality} 
            \Hom_{\Dcat(\modu A)}(a, b \Dtensor_A \omega_A [d]) \cong D  \Hom_{\Dcat(\modu A)}(b, a) .
        \end{align}
    \end{enumerate}

    \begin{remark}
        Although \cite[\S 3]{IR} states the above results for Gorenstein rings, their proof applies word for word to any CM ring equipped with a canonical module once $\cR$ is replaced by $\omega_\cR$.
    \end{remark}
    
    Recall that an algebra $A$ is called a \textit{Gorenstein $\cR$-order} if there is an $A$-bimodule isomorphism $\omega_A = \Rderived{\Hom_\cR}(A, \omega_\cR) \cong A$. Moreover, an algebra is self-injective if it is injective as a module over itself (see \cref{app: self inj} for a brief overview on self-injective algebras and the Nakayama permutation). 

    \begin{setup} \label[setup]{sym inj set up}
        Under \cref{module fin} assume in addition that $A$ is a Gorenstein $\cR$-order and that $B$ is a finite dimensional $K$-algebra which is either self-injective or has finite global dimension. Finally, assume that $B \in \Kcat^b(\proj A)$.
    \end{setup}

%    \begin{remark} \label{inj implies fin dim}
%        The key point of \ref{sym inj set up}, is that since $B$ is module finite over a noetherian ring and self-injective, \ref{GN result} implies that $B$ is finite dimensional. 
%    \end{remark}

    Recall that the dualizing complex of a finite dimensional algebra $B$ is $DB := \Hom_K(B, K)$ of $B$ over $K$. Hence, under set up \ref{sym inj set up}, the Nakayama functor of $B$ can be written as 
    \begin{align*}
        \cN & \colonequals (-) \Dtensor_B DB {}_B \colon \Db(\modu B) \to \Db(\modu B)
    \end{align*}
     and is known to be an exact equivalence whenever $B$ is self-injective or has finite global dimension. 
   
    In what follows, we are also interested in the the functors
\begin{align*}
    (-)^\dagger & \colonequals \Rderived{\Hom_\cR}(-, \omega_\cR) \colon \Dcat(\modu \cR) \to \Dcat(\modu \cR), \\
    D & \colonequals \Hom_\cR(-, k) \colon \Db(\fl \cR) \to \Db(\fl \cR). 
\end{align*}
    Since $\omega_\cR$ is a dualizing module, note that $(-)^\dagger$ induces a duality on $\Db(\modu \cR)$ so that $(-)^{\dagger \dagger} \cong 1_\cR$. 

\begin{lemma} \label{tor ext duality}
        Under \cref{sym inj set up}, suppose that $M$ and $N$ are $B$-bimodules with $M \in \Db(\fl \cR)$ and $N \in \Kb(\proj A)$. Then, there are $B$-bimodule isomorphisms
        \begin{align*}
            & {}_{B} \Tor^A_t(N, M) {}_{B}  \cong {}_{B} \Ext_A^{d-t}(D M, N) {}_{B} 
        \end{align*}
\end{lemma}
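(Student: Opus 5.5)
The plan is to derive the statement from the derived Serre duality \eqref{derived serre duality} for the Gorenstein order $A$ (so $\omega_A \cong A$), together with Matlis-type duality for finite length modules, and then take cohomology. Since $N \in \Kb(\proj A)$, we have $N \Dtensor_A \omega_A \cong N$. Applying \eqref{derived serre duality} with $b = N$ and $a = DM$ should give
\[ \Rderived{\Hom}_A(DM, N) \cong \Rderived{\Hom}_\cR(\Rderived{\Hom}_A(N, DM), \omega_\cR) = \Rderived{\Hom}_A(N, DM)^\dagger . \]
For this we need $DM \in \Db(\modu A)$. This holds because $M$ has finite length over $\cR$, hence so does $DM$, and $D$ turns the relevant side action into the other one.

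Next I would rewrite $\Rderived{\Hom}_A(N, DM)$ using tensor–hom adjunction over $K$:
\[ \Rderived{\Hom}_A(N, \Hom_K(M, K)) \cong \Hom_K(N \Dtensor_A M, K) = D(N \Dtensor_A M). \]
This is legitimate since $N$ is perfect, so a bounded projective resolution computes both sides. Because $N \Dtensor_A M$ is a bounded complex with finite length cohomology, we are reduced to comparing $(-)^\dagger$ and $D$ on $\Db(\fl \cR)$.

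The key fact is that for a finite length $\cR$-module $L$ one has $\Ext^i_\cR(L, \omega_\cR) = 0$ for $i \neq d$, and $\Ext^d_\cR(L, \omega_\cR) \cong D L$ (local duality). Hence $(-)^\dagger \cong D[-d]$ on $\Db(\fl \cR)$, and since $D^2 \cong 1$ on finite length modules, we get
\[ \Rderived{\Hom}_A(DM, N) \cong D D(N \Dtensor_A M)[-d] \cong (N \Dtensor_A M)[-d]. \]
Taking cohomology in degree $d-t$ gives $\Ext_A^{d-t}(DM, N) \cong H^{-t}(N \Dtensor_A M) = \Tor^A_t(N, M)$.

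The main obstacle is bookkeeping for the $B$-bimodule structures. One has to check that each isomorphism is natural with respect to the residual $B$-actions on $M$ and $N$, so that the isomorphisms of \cite{IR} and the adjunction are bimodule maps. The cleanest route is to use naturality of \eqref{derived serre duality} in both variables: left and right multiplication by elements of $B$ act through endomorphisms of $M$ and $N$, so functoriality transports the actions. One also has to confirm that the identification $(-)^\dagger \cong D[-d]$ on $\Db(\fl \cR)$ is functorial, so that it respects these actions. A secondary point is to make precise which side of $M$ is used in forming $N \Dtensor_A M$ and $DM$, since the duality swaps left and right actions.
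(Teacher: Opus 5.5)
Your proof is correct and follows essentially the same route as the paper. Both arguments combine Serre duality \eqref{derived serre duality} with $\omega_A \cong A$, a tensor--hom adjunction identifying $\Rderived{\Hom}_A(N, -)$ of a dual of $M$ with the corresponding dual of $N \Dtensor_A M$, double duality, and the identification $(-)^\dagger \cong D[-d]$ on $\Db(\fl \cR)$ from \cite[3.6]{IR}. The only difference is the order of steps: the paper applies Serre duality to $M^\dagger$, uses adjunction over $\cR$ and $(-)^{\dagger\dagger} \cong 1$, and substitutes $M^\dagger \cong DM[-d]$ at the end, whereas you apply it directly to $DM$, use adjunction over $K$, and use $D^2 \cong 1$.
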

\begin{proof}
    Consider the following isomorphisms,  
    \begin{align*}
        \Rderived{\Hom_A}({}_{B} M {}_{A} {}^\dagger , {}_{B} N {}_{A}) & \cong \Rderived{\Hom_A}({}_{B} M {}_{A} {}^\dagger, {}_{B} N {}_{A} \Dtensor  {}_{A} (\omega_A) {}_A  ) \tag{by assumptions \ref{sym inj set up}} \\
        & \cong \Rderived{\Hom_A}({}_{B} N {}_{A}, {}_{B} M {}_{A}  {}^\dagger )^\dagger  \tag{by Serre duality \eqref{derived serre duality}}
    \end{align*}
    Moreover, it follows from the derived tensor-hom adjunction that
    \[ \Rderived{\Hom_A}({}_{B} N {}_{A}, {}_{B} \Rderived{\Hom_\cR}(M, \omega_\cR) {}_{A} ) \cong \Rderived{\Hom_\cR}({}_{B} N {}_{A} \Dtensor {}_{A} M {}_{B}, \omega_\cR ). \]
    Hence, 
    \begin{align*}
        \Rderived{\Hom_A}({}_{B} N {}_{A}, {}_{B} M {}_{A} {}^\dagger )^\dagger & \cong ({}_{B} N {}_{A} \Dtensor {}_{A} M {}_{B})^{\dagger \dagger} \\
        & \cong {}_{B} N {}_{A} \Dtensor {}_{A} M {}_{B}. \tag{since $(-)^\dagger$ is a duality}
    \end{align*}
    Thus, there is are bimodule isomorphisms
    \begin{align*}
         {}_{B} \Ext_A^{-t}(M^\dagger, N) {}_{B} & = \cohom{-t}( \Rderived{\Hom_A}({}_{B} M^\dagger {}_{A}, {}_{B} N {}_{A}))  \\
         & \cong \cohom{-t}({}_{B} N {}_{A} \Dtensor {}_{A} M {}_{B}) \\
         & = {}_{B} \Tor^A_t(N, M) {}_{B}. 
    \end{align*}

    Lemma \cite[3.6]{IR} states that there is a functorial isomorphism $(-)^\dagger \cong [-d] \cdot D$ on $\Db(\fl \cR)$. Hence, 
    \[ {}_{B} \Tor^A_t(N, M) {}_{B}  \cong  {}_{B} \Ext_A^{-t}(N^\dagger, M) {}_{B} \cong  {}_{B} \Ext_A^{d-t}(D M, N) {}_{B}. \qedhere\]
\end{proof}

\begin{cor} \label{CMness}
     Under \cref{sym inj set up}, then $\cohom{k}( {}_B B \Dtensor_A DB {}_B) = 0$ for all $k \neq 0, -d$ if and only if $\cohom{k}(\Rderived{\Hom_A}(B, B)) = 0$ for all $k \neq 0, d$. 
\end{cor}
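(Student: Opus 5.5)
The plan is to apply \cref{tor ext duality} with $N = B$ and $M = DB$, and then translate between cohomology of a derived tensor product and Tor, and between cohomology of derived Hom and Ext.

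First we check the hypotheses. By \cref{sym inj set up} we have $B \in \Kb(\proj A)$, so we may take $N = B$. Since $B$ is finite dimensional over $K$, the bimodule $DB = \Hom_K(B,K)$ is again a finite dimensional $B$-bimodule. Finite dimensional $\cR$-modules have finite length (they are killed by a power of the maximal ideal), so $DB \in \Db(\fl \cR)$.

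Next we need $D(DB) \cong B$ as bimodules. Here $D = \Hom_\cR(-,K)$, which agrees with $\Hom_K(-,K)$ on modules killed by the maximal ideal. More generally, for finite length modules the Matlis-type duality $D$ used in \cite[3.6]{IR} restricts to $K$-duality on finite dimensional modules. The main point to verify is that the $D$ of \cref{tor ext duality} coincides with the $K$-linear dual on $B$-bimodules, so that $DDB \cong B$ compatibly with both $B$-actions. I expect this identification to be the only delicate step. If $K$ is not literally a coefficient field, one would argue via the injective hull of $K$, and the conclusion is the same.

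Granting this, \cref{tor ext duality} gives bimodule isomorphisms
\[
\cohom{-t}({}_B B \Dtensor_A DB {}_B) = \Tor^A_t(B, DB) \cong \Ext_A^{d-t}(DDB, B) \cong \Ext_A^{d-t}(B,B) = \cohom{d-t}(\Rderived{\Hom_A}(B,B))
\]
for every integer $t$. Setting $k = -t$, the cohomology $\cohom{k}({}_B B \Dtensor_A DB)$ vanishes exactly when $\cohom{d+k}(\Rderived{\Hom_A}(B,B))$ does. As $k$ ranges over integers different from $0$ and $-d$, the index $d+k$ ranges over integers different from $d$ and $0$. So the two vanishing conditions are equivalent, which proves the corollary.
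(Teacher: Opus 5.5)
Your proof is correct and is essentially the paper's argument: both apply the duality of \cref{tor ext duality} with $N = B$ and rely on the identification $(-)^\dagger \cong [-d]\cdot D$ on finite length modules. The only difference is bookkeeping. The paper substitutes $M = B^\dagger$ into the intermediate isomorphism $\Rderived{\Hom_A}(M^\dagger, N) \cong N \Dtensor_A M$ from that lemma's proof, obtaining $\Rderived{\Hom_A}(B,B) \cong B \Dtensor_A DB[-d]$ as complexes. You instead substitute $M = DB$ into the final Tor--Ext form and use $DDB \cong B$, which suffices for the cohomology-vanishing statement.
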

\begin{proof}
    Suppose that $M$ and $N$ are $B$-bimodules with $M \in \Db(\fl \cR)$ and $N \in \Kb(\proj A)$. From the proof of \ref{tor ext duality}, there is an isomorphism
        \begin{align*}
       \Rderived{\Hom_A}({}_{B} M {}_{A} {}^\dagger , {}_{B} N {}_{A}) & \cong {}_{B} N {}_{A} \Dtensor {}_{A} M {}_{B}.
    \end{align*}
    Thus, by letting $N = B$ and $M = B^\dagger$, it follows that 
    \begin{align*}
         \Rderived{\Hom_A}({}_{B} B {}_{A}, {}_{B} B {}_{A}) & \cong \Rderived{\Hom_A}({}_{B} B {}_{A} {}^{\dagger \dagger} , {}_{B} B {}_{A}) \tag{since $(-)^\dagger$ is a duality}
         \\ 
         & \cong {}_{B} B {}_{A} \Dtensor {}_{A} B {}_{B} {}^\dagger \\
         & \cong {}_{B} B {}_{A} \Dtensor {}_{A} DB {}_{B} [-d] \tag{Since $(-)^\dagger \cong [-d] \cdot D$ on $\Db(\fl \cR)$ by \cite[3.6]{IR}}
    \end{align*}
    Thus, the statement follows.
\end{proof}

\subsection{Spherical twists for Gorenstein orders}

Having introduced the setting of this section and its main properties in the previous subsection, we construct spherical twists for Gorenstein orders in this subsection. 

Our first goal is to show that the cotwist is a shift of the Nakayama functor of $B$. For this, we will need a few lemmas. 

\begin{lemma} \label{inverse bimod}
    Under \cref{sym inj set up}, suppose that $\Ext^k_A(B, B) = 0$ for all $k \neq 0, d$. Then,  there is an isomorphism ${}_B C(s \cdot \beta) {}_B \cong \Rderived{\Hom_B}({}_B DB {}_B, B)[d+1]$.
\end{lemma}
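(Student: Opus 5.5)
The plan is to compute the bimodule ${}_B B \Dtensor_A B {}_B$ explicitly and recognise the map $s \cdot \beta$ as the truncation onto its top cohomology. The first step is to rewrite ${}_B B \Dtensor_A B {}_B$ in terms of ${}_B B \Dtensor_A DB {}_B$. By \cref{sym inj set up}, $B$ is self-injective or has finite global dimension, so the Nakayama functor $\cN = - \Dtensor_B DB$ is an equivalence. Hence ${}_B DB {}_B$ is a two-sided tilting complex, with inverse $\Rderived{\Hom_B}({}_B DB {}_B, B)$. Therefore
\[ {}_B B \Dtensor_A B {}_B \cong \bigl({}_B B \Dtensor_A DB {}_B\bigr) \Dtensor_B \Rderived{\Hom_B}({}_B DB {}_B, B) \]
as $B$-bimodules, using ${}_A B {}_B \cong {}_A DB \Dtensor_B \Rderived{\Hom_B}(DB, B)$.

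Next I would control $X := {}_B B \Dtensor_A DB {}_B$. By the Ext-vanishing hypothesis and \cref{CMness}, $\cohom{k}(X) = 0$ for $k \neq 0, -d$. Since $p$ is surjective, $\cohom{0}(X) = B \otimes_A DB \cong DB$. Moreover, the proof of \cref{CMness} gives $\Rderived{\Hom_A}(B,B) \cong X[-d]$ as bimodules. It follows that
\[ \cohom{-d}(X) \cong \Hom_A(B,B) \cong B \]
as $B$-bimodules, because $B$ is a cyclic quotient of $A$, so every $A$-endomorphism of $B$ is left multiplication by an element of $B$. The standard truncation triangle therefore reads
\[ {}_B B {}_B [d] \to X \to {}_B DB {}_B \to^+ . \]
Applying the exact functor $- \Dtensor_B \Rderived{\Hom_B}(DB, B)$ and using $DB \Dtensor_B \Rderived{\Hom_B}(DB,B) \cong B$ yields a triangle
\[ \Rderived{\Hom_B}(DB, B)[d] \to {}_B B \Dtensor_A B {}_B \to {}_B B {}_B \to^+ . \]

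The main obstacle is checking that the second map in this triangle is $s \cdot \beta$. Since $\Hom_{\Dcat(B \otimes B^\op)}(B \Dtensor_A B, B) = \Hom(\tau_{\geq 0}(B \Dtensor_A B), B)$, it suffices to check this on $\cohom{0}$. By construction in \eqref{beta remark}, $s \cdot \beta$ induces on $\cohom{0}$ the multiplication isomorphism $B \otimes_A B \to B$. The truncation map does likewise, once the tilting identifications above are traced through degree $0$. Hence the two maps agree up to an automorphism of ${}_B B {}_B$, which does not affect the cone. Taking cones then gives
\[ C(s \cdot \beta) \cong \Rderived{\Hom_B}({}_B DB {}_B, B)[d+1]. \]
If the compatibility on $\cohom{0}$ proves fiddly, I would instead argue directly. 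The map $s \cdot \beta$ is an isomorphism on $\cohom{0}$, and the only other cohomology of $B \Dtensor_A B$ is $\cohom{-d}(B \Dtensor_A B) \cong \Tor^A_d(B,B) \cong \Ext^0_A(DB, B)$ by \cref{tor ext duality}. So the cone is concentrated in a single degree, namely $-d-1$, and I would identify that module with $\Rderived{\Hom_B}(DB,B)[d+1]$ using the computation above.
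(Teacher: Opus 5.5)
\textbf{The gap is at the step you flag yourself: identifying $g$ with $s\cdot\beta$.} Your first three steps are sound. The triangle $\Rderived{\Hom_B}(DB,B)[d] \to B \Dtensor_A B \to B$ does exist, with second map $g$. But the justification you give for $g$ being $s\cdot\beta$ does not work.

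You are right that $\Hom(B\Dtensor_A B, B) \cong \Hom_{B\otimes B^{\op}}(B,B)$, which is the centre $Z(B)$. So $g$ is $s\cdot\beta$ followed by multiplication by some central $z$. The whole content of the lemma is that $z$ is a unit; for $z=0$ the cone is different. ``Tracing the identifications through degree $0$'' cannot show this when $B$ has finite global dimension. There $M \colonequals \Rderived{\Hom_B}(DB,B)$ has cohomology in positive degrees, so $-\Dtensor_B M$ is not right t-exact. Hence $\cohom{0}(g)$ is not determined by $\cohom{0}$ of the truncation map $X \to DB$. In fact your triangle gives $\operatorname{coker}\cohom{0}(g) \cong \Ext^{d+1}_B(DB,B)$. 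The vanishing of this group is not known in advance; it is a consequence of the lemma. For self-injective $B$ the problem disappears, since $M$ is then a bimodule that is projective on each side.

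\textbf{The repair is to compare before tensoring with $M$.} Since $DB$ is a module, $\cohom{0}(Z\Dtensor_B DB) \cong \cohom{0}(Z)\otimes_B DB$ for any $Z$ with cohomology in non-positive degrees. So $(s\cdot\beta)\Dtensor_B DB \colon X \to DB$ induces the multiplication isomorphism on $\cohom{0}$. By your own $\Hom$-argument, it is therefore the truncation map, and then $g \cong (s\cdot\beta)\Dtensor_B DB \Dtensor_B M \cong s\cdot\beta$. Once you have this, your first step is redundant. The truncation triangle already says $C(s\cdot\beta)\Dtensor_B DB \cong \cohom{-d}(X)[d+1] \cong B[d+1]$. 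Uniqueness of the inverse of the two-sided tilting complex $DB$ then finishes. This is exactly the paper's proof: it applies $-\Dtensor_B DB$ to the triangle defining $C(s\cdot\beta)$, so the comparison problem never arises.

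\textbf{Your fallback is false when $B$ has finite global dimension.} The hypothesis, via \cref{CMness}, controls $B\Dtensor_A DB$, not $B\Dtensor_A B$. Passing from one to the other, as in \cref{ctw res cohom in 2 degrees}, uses that $B$ and $DB$ have the same indecomposable summands, which is self-injectivity.

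For a concrete failure, take the $3$-Kronecker quotient $B$ of \cref{example ghilb}, localised at the origin, so $d=3$. Since $B$ is hereditary and representation-infinite, $\Hom_B(DB,B)=0$ and $\Ext^1_B(DB,B)\neq 0$. By \cref{tor ext duality}:
\begin{itemize}
\item $\Tor^A_3(B,B)\cong\Hom_B(DB,B)=0$;
\item $\Tor^A_2(B,B)\cong\Ext^1_A(DB,B)$, which contains $\Ext^1_B(DB,B)\neq 0$.
\end{itemize}
So $\cohom{-2}(B\Dtensor_A B)\neq 0$ although $-2\neq 0,-d$. Since also $\Tor^A_1(B,B)\cong \ker p/(\ker p)^2=0$, the cone $C(s\cdot\beta)$ lives in degree $-3$, not $-d-1=-4$. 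It is not a module shifted by $[d+1]$. This agrees with the lemma, because here $\Rderived{\Hom_B}(DB,B)\cong\Ext^1_B(DB,B)[-1]$ is not a module either. The fallback is valid only in the self-injective case.
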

\begin{proof}
    Under the assumptions \cref{sym inj set up}, the functor $\cN \colonequals (-) \Dtensor_B DB {}_B$ is an equivalence, and so it follows from \cite[4.1, 4.2]{Ric} that the bimodule ${}_B DB {}_B$ admits an inverse $B$-bimodule complex $M := \Rderived{\Hom_B}(DB, B)$. By inverse bimodule complex, we mean a complex $M$ such that there are isomorphisms $M \Dtensor_B DB \cong DB \Dtensor_B M \cong B$. Note, moreover, that the inverse bimodule complex is unique up to isomorphism. To see this, suppose that $M, N$ are bimodule inverses of $DB$. Then,
    \[ M \cong M \Dtensor_B DB \Dtensor_B N \cong N. \]
    We will show that ${}_B C(s \cdot \beta) {}_B$ is a shift of the inverse bimodule of $DB$.
    
    Applying the derived Nakayama functor to the triangle defining $C(s \cdot \beta)$, induces a triangle
    \[ {}_B B \Dtensor_A B \Dtensor_B DB {}_B \tow{ s \cdot \beta \Dtensor_B DB} {}_B B \Dtensor_B DB {}_B \to {}_B C(s \cdot \beta) \Dtensor_B DB {}_B \to^+. \]
    For convenience, let $Y := {}_B C(s \cdot \beta) \Dtensor_B DB {}_B$ and $f \colonequals s \cdot \beta \Dtensor_B DB$. Moreover, observe that there is a natural multiplication map $m \colon {}_B B \otimes_A DB {}_B \to {}_B DB {}_B$. 
    
    Then, $Y \cong \cone(f)$ where, by fixing a projective resolution $q \colon Q \to B$ in $\Dcat(A \otimes_\Z B^\op)$ , we may write $f$ as the chain map
    {\small
    \[
    \begin{tikzcd}
        \cdots \arrow[r] \arrow[d] & Q^i \otimes_A B \otimes_B DB \arrow[r, "d^i"] \arrow[d] & Q^{i+1} \otimes_A B \otimes_B DB \arrow[r, "d^{i+1}"] \arrow[d] & \cdots \arrow[r, "d^{-1}"] \arrow[d] & Q^0 \otimes_A B \otimes_B DB \arrow[r] \arrow[d, "{(s \cdot \beta)^{0} \otimes_B DB }", two heads] & 0 \arrow[d]  \\
         \cdots \arrow[r] & 0 \arrow[r] \arrow[d] & 0 \arrow[r] \arrow[d] & \cdots \arrow[r] \arrow[d] & B \otimes_B DB
         %\cohom{0}({}_B B \Dtensor_A DB {}_B) 
         \arrow[d, "\sim"{anchor=south, rotate=90}, "m"] \arrow[r] & 0 \arrow[d]  \\
         \cdots \arrow[r] & 0 \arrow[r] & 0 \arrow[r] & \cdots \arrow[r] & DB \arrow[r] & 0.  \\
    \end{tikzcd}
    \]
    }
    It is straightforward to check that $(s \cdot \beta)^{0} \otimes_A DB=:f_0$ induces an isomorphism on cohomologies of the complexes. In particular, this means that $\Img d^{-1} \cong \ker f_0$ by the snake lemma. Therefore, $Y \cong \cone(f)$ is isomorphic to the chain complex
   \[
    \begin{tikzcd}[ampersand replacement=\&]
        \cdots \arrow[r] \& Q^i \otimes_A DB \arrow[r, "d^i"] \& Q^{i+1} \otimes_A DB \arrow[r, "d^{i+1}"] \& \cdots \arrow[r, "d^{-1}"] \& Q^0 \otimes_A DB \arrow[r, "f_0"] \& DB \arrow[r] \& 0 
    \end{tikzcd}
    \]
    which is exact in degrees $0$ and $-1$. Thus, there is a quasi-isomorphism $Y \cong \tau_{\leq -2}(B \Dtensor_A DB)[-1]$, where $\tau_{\leq n}$ denotes the canonical truncation functor. 

    If $\cohom{k}(\Rderived{\Hom_A}(B, B)) = 0$ for all $k \neq 0, d$, then by \ref{CMness}, $\cohom{k}( {}_B B \Dtensor_A DB {}_B) = 0$ for all $k \neq 0, -d$. It follows that $Y \cong \tau_{\leq -2}(B \Dtensor_A DB)[-1]$ has cohomology concentrated in degree $-d-1$. Thus, and by the duality \ref{tor ext duality},
    \[ Y \cong \cohom{-d}(B \Dtensor_A DB)[-d-1] = \Tor_d^A(B, DB) \cong B[-d-1] \]

    In other words, ${}_B C(s \cdot \beta) [d + 1] \Dtensor_B DB {}_B \cong B$. That is, ${}_B C(s \cdot \beta) {}_B$ is a shift of the inverse bimodule complex of ${}_B DB {}_B$. Thus, ${}_B C(s \cdot \beta) {}_B \cong \Rderived{\Hom_B}( {}_B DB {}_B, B)[d+1]$, as required. 
\end{proof}

When $B$ is self-injective, then we can further simplify the characterisation of the bimodule $C(s \cdot \beta)$.

\begin{lemma} \label{ctw res cohom in 2 degrees}
    Suppose that $\cohom{k}( {}_B B \Dtensor_A DB {}_B) = 0$ for all $k \neq 0, -d$. If $B$ is self-injective then there is a $B$-bimodule isomorphism
    \[ {}_B C(s \cdot \beta) {}_B \cong  \Tor_{d}^A(B, B).  \]
\end{lemma}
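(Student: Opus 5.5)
The plan is to redo the cone computation from the proof of \ref{inverse bimod}, but working directly with ${}_B B \Dtensor_A B {}_B$ rather than after tensoring with $DB$. Since $B$ is self-injective and finite dimensional, ${}_B B$ is an injective $B$-module and $DB$ is a projective generator on each side, so the hypothesis on $B \Dtensor_A DB$ transfers to $B \Dtensor_A B$.

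\textbf{Step 1: cohomology of $B \Dtensor_A B$.} I will show $\cohom{k}({}_B B \Dtensor_A B {}_B) = 0$ for $k \neq 0, -d$. The Nakayama functor $\cN = - \otimes_B DB$ is an exact autoequivalence of $\modu B$. As right $B$-modules, $\Tor^A_t(B, DB) \cong \Tor^A_t(B, B) \otimes_B DB$: compute both with a resolution $Q \to {}_BB_A$ by $B$-$A$-bimodules projective over $A$, and use that $- \otimes_B DB$ is exact, so it commutes with cohomology. Because $\cN$ is an equivalence, $\Tor^A_t(B, DB) = 0$ forces $\Tor^A_t(B, B) = 0$.

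\textbf{Step 2: the cone.} Fix the bimodule resolution $q \colon Q \to B$ as before \eqref{beta remark}, so that $s \cdot \beta$ is the chain map $\tot(Q \otimes_A B) \to B$ which is $m \cdot (q^0 \otimes_A 1)$ in degree $0$. This map is surjective and induces the isomorphism $\cohom{0}(B \Dtensor_A B) = B \otimes_A B \cong B$. The same snake-lemma argument as in \ref{inverse bimod} then gives
\[ C(s \cdot \beta) \cong \tau_{\leq -1}(B \Dtensor_A B)[1] \]
in $\Dcat(B \otimes_\Z B^\op)$. The truncation is taken in the bimodule category and the map is a bimodule map, so this is a bimodule isomorphism.

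By Step 1, $\tau_{\leq -1}(B \Dtensor_A B)$ has a single cohomology group, in degree $-d$, so it is isomorphic to $\Tor^A_d(B, B)[d]$ as a bimodule complex. Hence
\[ C(s \cdot \beta) \cong \Tor_d^A(B, B)[d+1]. \]

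\textbf{Main obstacle: the shift.} This computation produces the shift $[d+1]$, while the statement as written has no shift. I would first re-examine the degree conventions for the cone in \ref{cotwist gen}, since \ref{inverse bimod} records the shift explicitly. If the statement really omits it, I would prove the version with $[d+1]$ and note the discrepancy, which is consistent with \ref{inverse bimod} and the claim that the cotwist is $[-d-1] \cdot \cN$.

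The other delicate point is the degenerate case $d \le 1$. When $d = 1$, the two surviving degrees $0$ and $-1$ are adjacent, and one needs to check that the cone still only picks up $\cohom{-d}$. When $d = 0$, exactness at degree $0$ must instead use $\Tor_0$ together with the surjectivity of $f_0$. I would handle these by the same truncation bookkeeping.
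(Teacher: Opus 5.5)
Your proposal is correct and follows essentially the paper's route. You first transfer the vanishing from $B \Dtensor_A DB$ to $B \Dtensor_A B$ using self-injectivity; the paper does this by decomposing ${}_BDB$ into indecomposable projectives via the Nakayama permutation, while you use faithful exactness of $-\otimes_B DB$, which is the same idea. You then read $C(s\cdot\beta)$ off the long exact cohomology sequence of the defining triangle, which is what your truncation argument amounts to.

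You are also right about the shift. The paper's own proof shows $C(s\cdot\beta)$ is concentrated in degree $-d-1$; the stray $t$ there stands for $-d$, and the final shift should read $[d+1]$. So the correct statement is ${}_BC(s\cdot\beta)_B\cong\Tor_d^A(B,B)[d+1]$, consistent with Lemma~\ref{inverse bimod}. On the degenerate cases: $d=1$ needs nothing extra. For $d=0$, however, your truncation gives $C(s\cdot\beta)=0\neq\Tor_0^A(B,B)[1]\cong B[1]$, so no extra bookkeeping can rescue it; the lemma, and the paper's proof, genuinely require $d\geq 1$.
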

\begin{proof}
    This is essentially \cite[3.16]{Go}, but we present a proof here for convenience. 

    We first claim that $\cohom{k}(B \Dtensor_A B) = 0$ for all $k \neq 0, -d$. Well, since $B$ is finite dimensional, let us write $B = \bigoplus_{i = 1}^n P_i$ as a finite direct sum of its indecomposable projectives. Then, since $B$ is self-injective, $DB = \bigoplus_{i = 1}^n P_{\sigma^{-1}(i)}$, where $\sigma$ is the Nakayama permutation of $B$. We therefore have 
    \[\cohom{k}(B \Dtensor_A DB) = \bigoplus_{i = 1}^n \cohom{k}(B \Dtensor_A P_{\sigma^{-1}(i)}) = 0 \]
    for all $k \neq 0, -d$. Thus, necessarily $\cohom{k}(B \Dtensor_A P_{\sigma^{-1}(i)}) = 0$ for all $i$ and $k \neq 0, -d$. Since $\sigma$ is a permutation, we may conclude that  $\cohom{k}(B \Dtensor_A P_{i}) = 0$ for all $i$. Thus, 
    \[ \cohom{k}(B \Dtensor_A B) = \bigoplus_{i = 1}^n \cohom{k}(B \Dtensor_A P_i) = 0 \]
    as required.
    
    Next, note that the triangle of $B$-bimodules
    \[ {}_B B \Dtensor_A B {}_B \tow{s \cdot \beta} {}_B B {}_B \to {}_B C(s \cdot \beta) {}_B \to^+ \]
    induces the long exact sequence
    \[ \hdots \to \cohom{k}(B \Dtensor_A B) \tow{\cohom{k}(s \cdot \beta)} \cohom{k}(B) \to \cohom{k}(C(s \cdot \beta)) \to \cohom{k+1}(B \Dtensor_A B) \to \hdots \]
    on cohomology. Since $B$ is concentrated in degree zero and $\cohom{k}( {}_B B \Dtensor_A B {}_B)$ vanishes in all degrees $k \neq 0, -d$, it must be that $\cohom{k}(C(s \cdot \beta)) = 0$ for all $k \neq -1, 0, -d-1$. Moreover, note that $\cohom{0}(s \cdot \beta)$ is an isomorphism. Hence,
    \[ \cohom{-1}(C(s \cdot \beta)) = \cohom{0}(C(s \cdot \beta)) = 0. \]
    Therefore, $C(s \cdot \beta)$ is concentrated in degree $t-1$. It  follows from this and the long exact cohomology sequence that
    \[ C(s \cdot \beta) \cong \cohom{-d-1}(C(s \cdot \beta))[t-1] \cong \cohom{-d}(B \Dtensor_A B)[t-1]. \]
    Thus, the statement holds.
\end{proof}

As a result, we can at least establish that as a complex of $B$-modules then $C(s \cdot \beta)$ is bounded .

\begin{lemma} \label{C is bounded}
    The complex $C(s \cdot \beta)$ is bounded as a complex in $\Dcat(\modu B)$ and $\Dcat(\modu B^\op)$.
\end{lemma}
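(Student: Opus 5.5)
The boundedness of $C(s \cdot \beta)$ will follow from the defining triangle
\[ {}_B B \Dtensor_A B {}_B \tow{s \cdot \beta} {}_B B {}_B \to {}_B C(s \cdot \beta) {}_B \to^+ \]
in $\Dcat(B \otimes_\Z B^\op)$. Boundedness of a complex of bimodules (that is, having only finitely many nonzero cohomology groups) does not depend on whether we regard it as a complex of right or of left $B$-modules, since the cohomology groups are the same abelian groups in both cases. So it suffices to show that the cohomology of $C(s \cdot \beta)$ is nonzero in only finitely many degrees and is finitely generated over $B$ on either side. The term ${}_B B {}_B$ sits in degree zero and is finitely generated on both sides. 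By the long exact cohomology sequence of the triangle, it is therefore enough to prove the same two properties for ${}_B B \Dtensor_A B {}_B$.

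For that term I use the standing hypothesis $B \in \Kb(\proj A)$ from \cref{sym inj set up}. Choose a bounded resolution $Q \to {}_B B {}_A$ by finitely generated projective $A$-modules, of length $n$ say. Then $B \Dtensor_A B \cong \tot(Q \otimes_A B)$ is computed by a complex concentrated in degrees $[-n, 0]$, so $\cohom{k}(B \Dtensor_A B) = \Tor^A_{-k}(B, B)$ vanishes outside $-n \le k \le 0$.

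For finite generation, each term $Q^i \otimes_A B$ is a direct summand of a finite sum of copies of $B$. It is therefore finitely generated as a right $B$-module, and also finite dimensional over $K$, because $B$ is. Hence every cohomology group $\Tor_j^A(B,B)$ is finite dimensional over $K$, and in particular finitely generated as both a left and a right $B$-module. Combined with the long exact sequence, this shows that $C(s \cdot \beta)$ has finitely many nonzero cohomology groups, each finite dimensional. It thus lies in $\Db(\modu B)$ and in $\Db(\modu B^\op)$.

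The only delicate point is a bookkeeping one. Perfectness is assumed only for $B$ as a right $A$-module, while $Q$ must be a resolution of the bimodule. If one uses a bimodule projective resolution that is not bounded, the argument should instead use a resolution that is bounded and projective over $A$ and merely $B$-linear on the left, and compute the tensor product with it. Since derived tensor products are independent of the chosen K-flat resolution on one side, this gives the same object, and the degree bound $[-n,0]$ still holds. Everything else is formal.
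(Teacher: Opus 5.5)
Your argument is correct, but it takes a different route from the paper.

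The paper splits into the two cases of \cref{sym inj set up}. If $B$ is self-injective, it quotes \ref{ctw res cohom in 2 degrees} to get $C(s\cdot\beta)$ concentrated in a single degree. If $B$ has finite global dimension, it quotes the identification $C(s\cdot\beta)\cong\Rderived{\Hom_B}(DB,B)[d+1]$ from \ref{inverse bimod} and uses that $DB$ has finite projective dimension on each side. Both of those lemmas rest on the vanishing hypothesis $\Ext^k_A(B,B)=0$ for $k\neq 0,d$, or its Tor reformulation in \ref{CMness}, and on the local Gorenstein-order setting. Neither hypothesis is stated in the lemma itself.

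You use only the defining triangle, the perfectness of $B_A$, and $\dim_K B<\infty$. From these, $\cohom{k}(B\Dtensor_A B)=\Tor^A_{-k}(B,B)$ is finite dimensional and vanishes outside $[-n,0]$. The long exact sequence then puts the finite-dimensional cohomology of $C(s\cdot\beta)$ in $[-n-1,0]$.

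What your route buys:
\begin{itemize}
\item It needs no Ext-vanishing and no case distinction.
\item It applies verbatim in the global situation of \ref{spherical RG}, where this lemma is actually invoked. There $B$ is perfect because $R\#G$ has global dimension $3$. By contrast, the identification with $\Rderived{\Hom_B}(DB,B)[d+1]$ has only been proved over local Gorenstein orders.
\end{itemize}

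What the paper's route buys is the explicit shape of $C(s\cdot\beta)$: a single Tor group, or a shift of the inverse Nakayama bimodule. That is used elsewhere, but it is more than boundedness requires.

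Your ``delicate point'' is harmless. The cohomology groups are Tor groups, which can be computed from any one-sided resolution of $B_A$. If you want a bounded, left-$B$-linear resolution anyway, truncate a bimodule resolution at the $n$-th syzygy; that syzygy is $A$-projective because $\pdim B_A\leq n$.
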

\begin{proof}
    If $B$ is self-injective, then $C(s \cdot \beta)$ is concentrated in a single degree by \ref{ctw res cohom in 2 degrees} and the result follows. 

    If on the other hand $B$ has finite global dimension then $DB$ admits a finite projective resolution as a left or right $B$-module. It is thus clear that $C(s \cdot \beta) \cong \Rderived{\Hom_B}(DB, B)[d+1]$ is bounded.
\end{proof}

With these lemmas in hand, we are finally able to specify the cotwist. 

\begin{prop} \label{finite cotwist}
      Under \cref{sym inj set up}, suppose that  $\Ext^k_A(B, B) = 0$ for all $k \neq 0, d$. Then, the cotwist around $F$ restricted to is $\cC = [-d-1] \cdot \cN$.
\end{prop}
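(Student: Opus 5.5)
By \cref{cotwist gen}, the cotwist is $\cC = \Rderived{\Hom_B}({}_B C(s\cdot\beta) {}_B, -)$. The plan is to substitute the description of $C(s\cdot\beta)$ from \cref{inverse bimod} and simplify the resulting derived Hom into a tensor functor.

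\textbf{Step 1: substitute.} By \cref{inverse bimod},
\[ {}_B C(s\cdot\beta) {}_B \cong M[d+1], \qquad M \colonequals \Rderived{\Hom_B}({}_B DB {}_B, B), \]
where $M$ is the inverse bimodule complex of $DB$. Hence
\[ \cC \cong \Rderived{\Hom_B}(M[d+1], -) \cong \Rderived{\Hom_B}(M, -)[-d-1]. \]

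\textbf{Step 2: rewrite the Hom as a tensor.} Since $M$ is invertible, I claim $\Rderived{\Hom_B}(M,-) \cong - \Dtensor_B DB$. To see this, first note that $- \Dtensor_B M$ is an autoequivalence of $\Dcat(B)$ with quasi-inverse $- \Dtensor_B DB$, using $M \Dtensor_B DB \cong DB \Dtensor_B M \cong B$. Its right adjoint is $\Rderived{\Hom_B}(M,-)$. The right adjoint of an equivalence is its quasi-inverse, so
\[ \Rderived{\Hom_B}(M,-) \cong - \Dtensor_B DB = \cN. \]
An alternative route is to use that $M$ is perfect as a one-sided module. Then
\[ \Rderived{\Hom_B}(M,-) \cong - \Dtensor_B \Rderived{\Hom_B}(M,B), \qquad \Rderived{\Hom_B}(M,B) \cong DB, \]
where the last isomorphism holds by uniqueness of inverse bimodules as shown in \cref{inverse bimod}.

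\textbf{Step 3: restrict and conclude.} Combining the two steps gives $\cC \cong \cN \cdot [-d-1] = [-d-1]\cdot\cN$. To restrict from $\Dcat(B)$ to $\Db(\modu B)$, I use \cref{C is bounded} together with the fact that $\cN$ is an exact autoequivalence of $\Db(\modu B)$ in both cases of \cref{sym inj set up}.

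\textbf{Main obstacle.} The delicate point is making the isomorphism of Step 2 genuinely functorial and compatible with bimodule structures. That is, one must identify the bimodule $\Rderived{\Hom_B}(M,B)$ with $DB$ as bimodules, not only as one-sided modules. I would handle this via Rickard's theory \cite[4.1, 4.2]{Ric}: two-sided tilting complexes have the property that the one-sided dual $\Rderived{\Hom_B}(M,B)$ is the bimodule inverse. Uniqueness of the inverse, as shown in the proof of \cref{inverse bimod}, then gives the isomorphism with $DB$.
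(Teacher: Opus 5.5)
Your proof is correct and is essentially the paper's argument. The paper also notes that $\cC = \Rderived{\Hom_B}(C(s\cdot\beta),-)$ is right adjoint to $(-)\Dtensor_B C(s\cdot\beta)$, which by \cref{inverse bimod} is $[d+1]\cdot\cN^{-1}$, an equivalence, so $\cC$ is its inverse $[-d-1]\cdot\cN$. Your first route in Step 2 already yields a natural isomorphism of functors, so neither the bimodule identification you flag as the main obstacle nor the alternative perfectness route is needed.
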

\begin{proof}
    It follows from \ref{cotwist gen} that the cotwist $C$ around $F$ is
    \[ \cC = \Rderived{\Hom_B}( {}_B C(s \cdot \beta) {}_B, -). \]
    The left adjoint functor of $\cC$ is, therefore, $\cC^* \colonequals (-) \Dtensor_B C(s \cdot \beta) {}_B$. Moreover, it follows from \ref{inverse bimod} that $\cC^* \cong [d+1]
     \cdot \cN^{-1}$, which is an equivalence under the assumptions of \cref{sym inj set up}. Thus, $\cC$ is the inverse of $\cC^*$, meaning that $\cC \cong [-d-1] \cdot \cN$.
\end{proof}

Consequently, the restriction of scalars functor is spherical. 

\begin{theorem}\label{spherical condition 4}
     Under \cref{sym inj set up}, then $F^\RA \cong [1] \cdot \cC \cdot F^\LA$ on $\Db(A)$. In particular, both $F^\RA$ and $F^\LA$ preserve bounded complexes. Hence, if  $\Ext^k_A(B, B) = 0$ for all $k \neq 0, d$, then $F \colon \Db(\modu B) \to \Db(\modu A)$ is spherical. 
\end{theorem}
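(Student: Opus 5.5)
The plan is to compute both adjoints of $F$ explicitly as tensor functors, using the Gorenstein property of $A$ and the finite-dimensionality of $B$. The first formula comes out in Nakayama form, and it is then matched with $[1]\cdot\cC\cdot F^\LA$ through the description of $\cC$ in \ref{finite cotwist}. That last identification is the one place where the $\Ext$-vanishing hypothesis is used.

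\emph{Step 1.} Since $B \in \Kb(\proj A)$, for every $x \in \Db(A)$ there is a natural isomorphism
\[ F^\RA(x) = \Rderived{\Hom_A}({}_B B {}_A, x) \cong x \Dtensor_A \Rderived{\Hom_A}({}_B B {}_A, A). \]
Since $A$ is a Gorenstein $\cR$-order, $A \cong \omega_A$ as bimodules. Derived tensor--hom adjunction over $\cR$ then gives $A$-$B$-bimodule isomorphisms
\[ \Rderived{\Hom_A}(B, A) \cong \Rderived{\Hom_A}(B, \Rderived{\Hom_\cR}(A,\omega_\cR)) \cong \Rderived{\Hom_\cR}(B, \omega_\cR) = B^\dagger. \]
As $B$ is finite dimensional, $B \in \Db(\fl \cR)$, so \cite[3.6]{IR} gives $B^\dagger \cong DB[-d]$. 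Writing ${}_A DB {}_B \cong {}_A B \otimes_B DB {}_B$, we obtain the unconditional chain
\[ F^\RA(x) \cong x \Dtensor_A DB[-d] \cong (x \Dtensor_A B) \Dtensor_B DB [-d] = [-d]\cdot \cN \cdot F^\LA(x). \]
These are all isomorphisms of bimodule complexes, so the chain is natural in $x$.

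\emph{Step 2.} This gives the ``in particular'' clause. $F^\LA$ preserves bounded complexes because $B$ is perfect over $A$, so $B \Dtensor_A -$ has finite Tor-amplitude. By Step 1, $F^\RA$ is $F^\LA$ followed by $\cN$ and a shift. Under \cref{sym inj set up}, $\cN$ is an exact autoequivalence of $\Db(\modu B)$, because $B$ is either self-injective or of finite global dimension. Hence $F^\RA$ also preserves bounded complexes, and all of $F$, $F^\LA$, $F^\RA$ and the adjunctions restrict to the bounded categories.

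\emph{Step 3.} To rewrite Step 1 as $F^\RA \cong [1]\cdot\cC\cdot F^\LA$, we need $[1]\cdot\cC \cong [-d]\cdot\cN$, that is, $\cC \cong [-d-1]\cdot \cN$. This is exactly \ref{finite cotwist}. It rests on \ref{inverse bimod}, which shows $C(s\cdot\beta)$ is the $[d+1]$-shift of the inverse bimodule of $DB$. Both results use $\Ext^k_A(B,B)=0$ for $k\neq 0,d$, via \ref{CMness}.

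I expect this step to be the main obstacle if one wants the displayed isomorphism with the cotwist as defined in \ref{cotwist gen}, without the $\Ext$ hypothesis. First I would try applying $\Rderived{\Hom_B}(-, F^\LA x)$ to the triangle ${}_B B\Dtensor_A B{}_B \to B \to C(s\cdot\beta)\to^+$. This exhibits $\cC F^\LA(x)[1]$ as the cone of $F^\LA x \to F^\RA F F^\LA x$. I would then compare it with $F^\RA$ applied to the triangle $x \Dtensor_A \ker p \to x \to x\Dtensor_A B \to^+$, using Step 1 to control the outer terms. However, I would not expect this comparison to close up without the concentration of cohomology that the hypothesis provides. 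So I would state the isomorphism with $\cC$ under that hypothesis, and keep Step 1 as the unconditional statement.

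\emph{Step 4.} Now assume the $\Ext$-vanishing. By \ref{finite cotwist}, $\cC \cong [-d-1]\cdot\cN$ is an autoequivalence of $\Db(\modu B)$, and by Step 3, $F^\RA \cong \cC \cdot F^\LA[1]$. The criterion \ref{sph criterion add} therefore applies, and $F \colon \Db(\modu B) \to \Db(\modu A)$ is spherical.
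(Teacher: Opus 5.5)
Your proof is correct and is essentially the paper's own argument: the same chain $F^\RA(x)\cong x\Dtensor_A\Rderived{\Hom_A}(B,A)\cong x\Dtensor_A B^\dagger\cong x\Dtensor_A DB[-d]$, then \ref{finite cotwist} to rewrite $[-d]\cdot\cN$ as $[1]\cdot\cC$, then \ref{sph criterion add}. Your caution in Step 3 is well founded, for three reasons. First, the paper's final step also goes through \ref{finite cotwist}, so it tacitly needs the $\Ext$-vanishing hypothesis. Second, without that hypothesis the isomorphism with $\cC$ genuinely fails: for $A=K[[x,y]]$ and $B=K$ one gets $F^\RA(A)\cong K[-2]$, whereas $\cC F^\LA(A)[1]$ has cohomology $K^2$ in degree $1$ and $K$ in degree $2$. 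Third, it follows that your organisation is the right one: keep $F^\RA\cong[-d]\cdot\cN\cdot F^\LA$ as the unconditional statement, and use $\cN$ rather than $\cC$ for the boundedness claim.
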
 
\begin{proof}
    Let $c \in \Db(A)$, Then, there are the following functorial isomorphisms
    \begin{align*}
        F^\RA(c) & = \Rderived{\Hom_A}({}_B B, c) \\
                 & \cong \Rderived{\Hom_A}({}_B B, c \Dtensor_A A {}_A) \\
                 & \cong c \Dtensor_A \Rderived{\Hom_A}({}_B B, A ) \tag{by \cite[2.10(2)]{IR}, since $B$ is perfect}\\
                 & \cong c \Dtensor_A \Rderived{\Hom_A}({}_B B, A^\dagger ) \tag{since $A$ is a Gorenstein $\cR$-order } \\
                 & \cong c \Dtensor_A ( {}_B B {}_A)^\dagger \tag{by derived tensor-hom adjunction} \\
                 & \cong c \Dtensor_A D B {}_B [-d] \tag{by \cite[3.6]{IR} since $B$ is finite dimensional} \\
                 & \cong c \Dtensor_A B {}_B \Dtensor_B DB {}_B [-d] \\
                 & \cong [1] \cdot \cC \cdot F^\LA(c) \tag{by \ref{finite cotwist} since $\cC = (-) \Dtensor_B DB [-d-1]$.}
    \end{align*}

    Since $B$ is perfect as a right $A$-module, then $F^\LA$ preserves bounded complexes. Moreover $\cC$ also preserves bounded complexes, and so necessarily $F^\RA$ preserves bounded complexes. 
    
    Next, suppose that  $\Ext^k_A(B, B) = 0$ for all $k \neq 0, d$. Then, the fact that $F$ is spherical follows immediately from the above combined with \ref{finite cotwist} and addendum to the $2$ out of $4$ property \ref{sph criterion add}.
\end{proof}

Since under the assumptions of \cref{sym inj set up}, $B$ is finite dimensional, it follows that $B$ has finitely many simples up to isomorphism. Thus, let $\Omega = \{ S_j \}_{j=1}^k$ denote the set containing one representative from each isomorphism class of simple modules. If $B$ is self-injective, let $\sigma$ be the Nakayama permutation of $B$.

\begin{cor} \label{permutes simples}
Under \cref{sym inj set up}, suppose that $\Ext^k_A(B, B) = 0$ for all $k \neq 0, d$. Then, the diagram
    \[ \begin{tikzcd}[column sep = 5em]
        \Db(\modu A) \arrow[r, "\cT"] & \Db(\modu A) \\
        \Db(\modu B) \arrow[u, "F"] \arrow[r, "{\cN(-)[-d+1]}"] & \Db(\modu B) \arrow[u, "F"]
    \end{tikzcd} \]
    commutes. In particular, 
    \begin{align*}
        & \Rderived{\Hom_A}(\ker p, {}_{B} B) \cong {}_{B} D B {}_A [-d + 1].
    \end{align*}
    If, further, $B$ is self-injective then
    \begin{align*}
        & \Rderived{\Hom_A}(\ker p, S_i) \cong S_{\sigma^{-1}(i)} {}_A [-d + 1].
    \end{align*}
\end{cor}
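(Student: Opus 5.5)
The plan is to deduce the commutativity of the square from the standard relation between the twist and the cotwist of a spherical functor,
\[ \cT \cdot F \cong F \cdot \cC [2], \]
and then substitute $\cC = [-d-1]\cdot\cN$ from \ref{finite cotwist}. This gives $F\cdot\cC[2] \cong F\cdot\cN[-d+1]$, which is exactly the bottom row of the diagram.

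To prove $\cT F \cong F \cC[2]$, I will work at the level of the bimodules of \cref{funct cones}, so that all cones are functorial.

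1. Apply the twist triangle \eqref{twist triang in D} to $F$. This gives
\[ FF^\RA F \tow{\epsilon^\rmR_F} F \to \cT F \to^+ . \]
2. Apply $F$ to the cotwist triangle \eqref{cotwist triang in D}. This gives
\[ F\cC \to F \tow{F\eta^\rmR} FF^\RA F \to^+ . \]
3. By the triangle identity, the composite $\epsilon^\rmR_F \cdot F\eta^\rmR$ is the identity of $F$.
4. Apply the octahedral axiom to the composable pair $F \tow{F\eta^\rmR} FF^\RA F \tow{\epsilon^\rmR_F} F$. Since the composite is an isomorphism, its cone vanishes. The octahedron then gives a triangle relating $\cone(F\eta^\rmR) \cong F\cC[1]$ and $\cone(\epsilon^\rmR_F) \cong \cT F$, with third term $0$.
5. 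Hence $\cT F \cong F\cC[1][1] = F\cC[2]$.

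This is the step I expect to be the main obstacle. One must check that the connecting maps are the natural ones. One must also check that the isomorphism is natural in the argument, that is, that it holds as an isomorphism of $B$-$A$-bimodule complexes rather than objectwise. To handle this, I would run the octahedron in $\Dcat(B\otimes_\Z A^\op)$, using the bimodules $s$, $r$, $t$, $c$ of \cref{funct cones}. The counit and unit are then morphisms of bimodule complexes, and the resulting isomorphism of bimodules induces the natural isomorphism of functors. Alternatively, I could cite the corresponding statement in \cite{AL}, which holds for any functor with a right adjoint in this enhanced setting.

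Next I evaluate on the $B$-bimodule ${}_BB_B$. Because the isomorphism is one of bimodule complexes, it retains the left $B$-action. On one side,
\[ \cT F(B) = \Rderived{\Hom_A}(\ker p, {}_BB_A) \]
by \ref{twist for epi}. On the other side,
\[ F\cN(B)[-d+1] = {}_B B\Dtensor_B DB_A[-d+1] \cong {}_BDB_A[-d+1]. \]
This gives the first displayed isomorphism.

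Finally, suppose $B$ is self-injective. Then $\cN$ is exact. Since $\cN$ is an autoequivalence of $\modu B$ carrying $P(i)$ to $I(i)$, it sends simples to simples. Using \cref{Nakayama perm} and the identification $\soc P(i)=S_{\sigma(i)}$, one reads off $\cN(S_i) \cong S_{\sigma^{-1}(i)}$. To see this, compare tops: $\cN(S_i)$ is the top of $\cN(P(j))=I(j)$ for the appropriate $j$, and one tracks the permutation. Plugging $S_i$ into the commutative square and using $F(S_i) = S_i{}_A$ yields
\[ \Rderived{\Hom_A}(\ker p, S_i) \cong S_{\sigma^{-1}(i)}{}_A[-d+1]. \]
The only care needed here is matching the direction of $\sigma$ with the convention of \cref{Nakayama perm}.
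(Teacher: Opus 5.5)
Your argument is correct and has the same skeleton as the paper's proof: establish $\cT\cdot F \cong F\cdot\cC[2]$, substitute $\cC \cong [-d-1]\cdot\cN$ from \ref{finite cotwist}, and evaluate at ${}_BB$ and at $S_i$.

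The genuine difference is how you obtain the first isomorphism. The paper first invokes sphericity of $F$ (\ref{spherical condition 4}) and then cites \cite[\S 1.3]{add}. You instead derive it directly from the triangle identity $\epsilon^\rmR_F\cdot F\eta^\rmR = 1_F$. That identity makes $F\eta^\rmR$ a split monomorphism, hence $\cone(\epsilon^\rmR_F)\cong\cone(F\eta^\rmR)[1]$.

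This buys two things:
\begin{enumerate}
\item It shows $\cT\cdot F\cong F\cdot\cC[2]$ holds for any functor with a right adjoint once cones are functorial. So the commutative square needs only the identification of the cotwist in \ref{finite cotwist}, not Addington's criterion or the full sphericity statement.
\item Running the argument in $\Dcat(B\otimes_\Z A^\op)$ produces an isomorphism of bimodule complexes. This justifies the left $B$-structure on ${}_B DB_A[-d+1]$ in the displayed isomorphism, which the paper leaves implicit.
\end{enumerate}

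Your treatment of the simples also fills in a step the paper omits, and your direction of $\sigma$ is right. By \ref{Nakayama perm} we have $P(j)\cong I(\sigma(j))$, where $I(j)$ is the injective envelope of $S_j$. Hence $\cN(P(i)) = I(i)\cong P(\sigma^{-1}(i))$, whose top is $S_{\sigma^{-1}(i)}$. Since $\cN$ is an exact autoequivalence of $\modu B$, it carries the projective cover $P(i)\to S_i$ to a projective cover, so this top is $\cN(S_i)$.
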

\begin{proof}
    By \ref{spherical condition 4}, $F$ is spherical, so it follows from \cite[section 1.3]{add} and \ref{finite cotwist} that there are isomorphisms $\cT \cdot F(-) \cong F \cdot \cC(-)[2] \cong F \cdot \cN(-)[-d+1]$.
\end{proof}

\subsection{Spherical twists in dimensions $2$ and $3$} 
The Ext-vanishing condition required to construct the spherical twist may seem difficult to compute in general. However, when $\dim \cR = 2, 3$, this condition can be further simplified. 

\begin{lemma} \label{tor vanishing}
    Under \cref{sym inj set up}, suppose that $\dim \cR = 2, 3$ and $(\ker p)^2 \cong \ker p$. Then, for all $k \neq 0, 3$, $\Ext^k_A(B, B) = 0$. If moreover, $B$ is self-injective, then the converse is true.
\end{lemma}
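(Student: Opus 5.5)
Set $I \colonequals \ker p$, so there is a short exact sequence of $A$-bimodules $0 \to I \to A \to B \to 0$. The plan is to show directly that $\Ext^1_A(B,B)=0$, and then, via the duality of \cref{tor ext duality}, that $\Ext^{d-1}_A(B,B)=0$. When $d \in \{2,3\}$ these are the only degrees strictly between $0$ and $d$, so this suffices. (Presumably the ``$3$'' in the statement should be read as $d$; that is what \cref{thm intro 2} needs.)

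\emph{Degrees outside $[0,d]$.} Negative degrees vanish trivially. For $k>d$, apply \cref{tor ext duality} with $N=B$ (perfect over $A$ by \cref{sym inj set up}) and $M=DB$ (finite length, since $B$ is finite dimensional). Since $D(DB)\cong B$, this gives
\[ \Ext^{k}_A(B,B)\cong \Tor^A_{d-k}(B,DB), \]
which vanishes when $d-k<0$. The same isomorphism will be used below for $k=d-1$.

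\emph{Degree $1$.} Apply $\Hom_A(-,B)$ to $0\to I\to A\to B\to 0$. Since $\Ext^1_A(A,B)=0$, we get that $\Ext^1_A(B,B)$ is the cokernel of $\Hom_A(A,B)\to\Hom_A(I,B)$. It therefore suffices to show $\Hom_A(I,B)=0$. For $f\in\Hom_A(I,B)$ and $x,y\in I$ we have $f(xy)=f(x)y=0$, because $I$ acts by zero on $B$. So $f$ vanishes on $I^2=I$, hence $f=0$.

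\emph{Degree $d-1$.} By the displayed isomorphism, $\Ext^{d-1}_A(B,B)\cong\Tor_1^A(B,DB)$. Tensoring the short exact sequence with the left $A$-module $DB$ identifies $\Tor^A_1(B,DB)$ with $\ker(I\otimes_A DB\to DB)$. Now $I\otimes_A DB=0$: every element of $I=I^2$ is a sum of products $xy$ with $x,y\in I$, and $xy\otimes m=x\otimes ym=0$ because $I$ annihilates the $B$-module $DB$. Hence $\Tor^A_1(B,DB)=0$. For $d=2$ this is the same degree as the previous step; for $d=3$ it covers $k=2$. The main care needed here is to track left and right module structures so that the duality of \cref{tor ext duality} genuinely applies with $N=B$ and $M=DB$ as bimodules.

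\emph{Converse.} Suppose $B$ is self-injective and $\Ext^1_A(B,B)=0$. From the long exact sequence above, every map $I\to B$ then extends to $A$, so it is right multiplication by some element $b\in B$. Such a map is $i\mapsto p(i)b=0$ on $I$. Hence $\Hom_A(I,B)=0$, and in particular
\[ \Hom_B(I/I^2,B)=0. \]
Now $I/I^2$ is a finitely generated right $B$-module. Since $B$ is finite-dimensional self-injective, injective hulls of finitely generated modules are finite sums of indecomposable projectives, which are summands of free modules. So every nonzero finitely generated $B$-module admits a nonzero map to $B$. It follows that $I/I^2=0$, that is, $I^2=I$.

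I expect the only delicate points to be this cogenerator argument in the converse and the bookkeeping of sides in the Tor computation.
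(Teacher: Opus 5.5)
Your proof is correct, but it handles the degrees other than $1$ differently from the paper. You are also right that ``$k\neq 0,3$'' must be read as ``$k\neq 0,d$''. For $d=2$ your own isomorphism gives $\Ext^2_A(B,B)\cong B\otimes_A DB\cong DB\neq 0$, and the paper's proof likewise only establishes vanishing for $k\neq 0,d$.

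\textbf{Forward direction.} Your $\Ext^1$ step is the same as the paper's. For the other degrees the paper argues in two steps:
\begin{enumerate}
\item The Auslander--Buchsbaum formula for Gorenstein orders \cite[2.16]{IW1} gives $\pdim_A B\le d$.
\item For $d=3$, Serre duality \eqref{serre duality} gives $\Ext^2_A(B,B)\cong D\Ext^1_A(B,B)$, so everything reduces to $\Ext^1$.
\end{enumerate}
You instead apply \ref{tor ext duality} once, with $N=B$ and $M=DB$, to get $\Ext^k_A(B,B)\cong\Tor^A_{d-k}(B,DB)$. This kills $k>d$ immediately. It also turns degree $d-1$ into $\Tor_1^A(B,DB)$, which embeds in $I\otimes_A DB=0$; that is a direct consequence of $I=I^2$ rather than a reduction to $\Ext^1$.

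\textbf{Converse.} The paper identifies $\Tor_1^A(B,B)\cong I/I^2$ and kills it via
\[
\Tor_1^A(B,B)\cong\Tor_1^A(B,DB)\cong\Ext^2_A(B,B)\cong D\Ext^1_A(B,B).
\]
This is written out only for $d=3$. It also uses ``$B\cong DB$'', which literally holds only when $B$ is Frobenius; in general one needs the summand-by-summand argument of \ref{ctw res cohom in 2 degrees}. Your argument avoids duality altogether:
\begin{enumerate}
\item $\Ext^1_A(B,B)=0$ forces $\Hom_A(I,B)=0$.
\item Hence $\Hom_B(I/I^2,B)=0$.
\item Hence $I/I^2=0$, because $B_B$ is a cogenerator when $B$ is self-injective.
\end{enumerate}
It is uniform in $d$ and sidesteps that subtlety.

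\textbf{Trade-off.} The paper's forward direction uses only standard tools (a depth bound plus Serre duality) and does not need the Tor/Ext lemma. Yours is more self-contained relative to the paper.

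One cosmetic slip: maps $A_A\to B_A$ are left multiplications $a\mapsto b\,p(a)$, not $a\mapsto p(a)b$. This is harmless, since $p(I)=0$ either way.
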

\begin{proof}
    Consider the case $\dim \cR = 2$. Since $B \in \Kcat^b(\proj A)$, we may apply the Auslander-Buchsbaum formula for Gorenstein orders \cite[2.16]{IW1}
    \[ \pdim_A B \leq \pdim_A B + \depth_\cR B = 2. \]
    Whence, $\Ext^k_A(B, B) = 0$ for $k < 0$ and $k > 2$. Thus, it suffices to show that $\Ext^1_A(B, B)$ vanishes.

    Consider next the case $\dim \cR = 3$. Then, as before, we may apply the Auslander-Buchsbaum \cite[2.16]{IW1}
    \[ \pdim_A B \leq \pdim_A B + \depth_\cR B = 3 \]
    and so $\Ext^k_A(B, B) = 0$ for $k < 0$ and $k > 3$. Moreover, by Serre duality \eqref{serre duality}, there is a vector space isomorphism $\Ext^2_A(B, B) \cong D \Ext^1_A(B, B)$. Thus, again, it suffices to show that $\Ext^1_A(B, B)$ vanishes. 
    
    Well, applying the functor $\Hom_A(-, B)$ to the exact sequence
    \begin{align} \label{kerp seq}
        0 \to \ker p \to A \to  A/ \ker p \ (\cong B) \to 0 
    \end{align}
induces that long exact sequence
    \begin{align*}
        0 \to \Hom_A(B, B) \to \Hom_A(A, B) \to \Hom_A(\ker p, B) \to \Ext^1_A(B, B) \to 0.
    \end{align*}
    However, $\Hom_A(\ker p, B) = 0$. To see this, consider a morphism $f \colon \ker p \to A/ \ker p$. Since $\ker p = (\ker p)^2$, then any $x \in \ker p$ is such that $x = x_1 x_2$ for $x_1, x_2 \in \ker p$. Thus, necessarily $f(x) = f(x_1 x_2) = f(x_1) x_2$. Since $x_2 \in \ker p$, it must be that $f(x_1) x_2 = 0$. Thus, $f$ is the zero map and we may conclude that $\Hom_A(\ker p, B) = 0$, which implies that $\Ext^1_A(B, B) = 0$.

    Suppose next that $B$ is self-injective and that $\Ext^1_A(B, B) = 0$. We will prove the statement for the case $\dim \cR = 3$, but the proof for the two-dimensional case is very similar. 
    
    Since $B \cong DB$ by \ref{tor ext duality}, there are vector space isomorphisms   
    \[ \Tor_1^A(B, B) \cong \Tor_1^A(B, DB) \cong \Ext^2_A(B, B) \cong D \Ext^1_A(B, B) = 0 \]
   It is a classical result that $\Tor_1^A(B, B) = \Tor_1^A(A/\ker p, A/\ker p) \cong \ker p / (\ker p)^2$. Indeed, applying the functor $- \otimes_A A/ \ker p$ to the exact sequence \eqref{kerp seq} yields the exact sequence
    \[ 0 \to \Tor_1^A(B, B) \to \ker p \otimes_A A/\ker p \tow{f} A \otimes_A A/\ker p \to A/ \ker p \otimes_A A/\ker p \to 0.\]
    It is easy to check that $f = 0$ and, thus, $ \Tor_1^A(B, B) \cong \ker p \otimes_A A/\ker p$. Moreover, applying the functor $- \otimes \ker p$ to \eqref{kerp seq} induces an exact sequence
    \[  \ker p \otimes_A \ker p \tow{g} A \otimes_A \ker p \to A/\ker p  \otimes_A \ker p \to 0.\]
    It is straightforward that $\Img g \cong (\ker p)^2$. We may conclude that  
    \[ \Tor_1^A(B, B) \cong A/\ker p  \otimes_A \ker p \cong \ker p / (\ker p)^2\]
    and thus the statement follows.
\end{proof}

\begin{cor} \label{3d cotwist}
    Under \cref{sym inj set up}, suppose that $\dim \cR = 2, 3$. If $(\ker p)^2 \cong \ker p$, then, the cotwist around $F$ is $\cC \cong [-d-1] \cdot \cN$, where $\cN(-)$ denotes the derived Nakayama autoequivalence of $B$.
\end{cor}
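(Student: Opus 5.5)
This corollary follows by combining \ref{tor vanishing} with \ref{finite cotwist}.

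\textbf{Step 1: the Ext-vanishing hypothesis.} Assume $\dim \cR = d \in \{2,3\}$ and $(\ker p)^2 \cong \ker p$. By \ref{tor vanishing}, the groups $\Ext^k_A(B,B)$ vanish for $k<0$ and for $k>d$, using the Auslander--Buchsbaum bound $\pdim_A B \le d$. The same lemma gives $\Ext^1_A(B,B) = 0$, since $\Hom_A(\ker p, B) = 0$ when $\ker p$ is idempotent.

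\textbf{Step 2: the remaining degrees.} For $d = 3$, Serre duality \eqref{serre duality} gives $\Ext^2_A(B,B) \cong D\Ext^1_A(B,B) = 0$. For $d=2$ the only possible nonzero degrees are already $0,1,2$, and degree $1$ has been killed. In either case $\Ext^k_A(B,B) = 0$ for all $k \neq 0, d$.

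The statement of \ref{tor vanishing} is phrased as ``$k \neq 0,3$''. I would therefore record explicitly that its proof yields vanishing for all $k \neq 0,d$ in both dimensions. This is the one point needing care, but it is immediate from the argument just given.

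\textbf{Step 3: conclude.} With the hypotheses of \cref{sym inj set up} in force and $\Ext^k_A(B,B) = 0$ for $k \neq 0,d$, \ref{finite cotwist} applies directly and gives $\cC \cong [-d-1]\cdot \cN$.

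The derived Nakayama functor $\cN$ is an autoequivalence because $B$ is either self-injective or of finite global dimension, as noted after \cref{sym inj set up}. Hence $\cC$ is indeed the stated shift of an autoequivalence.

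The main obstacle is Step 2's degree bookkeeping in dimension $2$ versus $3$. Once that is checked, the result is a direct citation.
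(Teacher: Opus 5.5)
Your proposal is correct and matches the paper's proof, which likewise just combines \ref{tor vanishing} with \ref{finite cotwist}. Your degree-bookkeeping remark is right: for $d=2$, Serre duality gives $\Ext^2_A(B,B)\cong D\Hom_A(B,B)\neq 0$, so the ``$k\neq 0,3$'' in \ref{tor vanishing} (and in the paper's proof of this corollary) must be read as ``$k\neq 0,d$'', which is exactly what the lemma's proof establishes.
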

\begin{proof}
    In light of \ref{finite cotwist}, the key point to prove is that  $\Ext^k_A(B, B) = 0$ for all $k \neq 0, 3$. This is exactly \ref{tor vanishing}. 
\end{proof}

\begin{cor} \label{3d spherical}
    Under \cref{sym inj set up}, suppose that $\dim \cR = 2, 3$. If $(\ker p)^2 = \ker p$, then, the restriction of scalars functor $F \colon \Db(\modu B) \to \Db(\modu A)$ is spherical. Moreover, the diagram
    \[ \begin{tikzcd}[column sep = 5em]
        \Db(\modu A) \arrow[r, "\cT"] & \Db(\modu A) \\
        \Db(\modu B) \arrow[u, "F"] \arrow[r, "{\cN(-)[-2]}"] & \Db(\modu B) \arrow[u, "F"]
    \end{tikzcd} \]
    commutes. In particular, 
    \begin{align*}
        & \Rderived{\Hom_A}(\ker p, {}_{B} B) \cong {}_{B} D B {}_A [-2].
    \end{align*}
    If, moreover, $B$ is self-injective then
    \begin{align*}
        & \Rderived{\Hom_A}(\ker p, S_i) \cong S_{\sigma^{-1}(i)} {}_A [-2].
    \end{align*}
\end{cor}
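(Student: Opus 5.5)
The plan is to deduce everything from the general results \ref{spherical condition 4} and \ref{permutes simples}, after checking their Ext-vanishing hypothesis with \ref{tor vanishing}. First, under \cref{sym inj set up} with $\dim \cR \in \{2,3\}$ and $(\ker p)^2 = \ker p$, \ref{tor vanishing} gives $\Ext^k_A(B,B) = 0$ for all $k \neq 0, 3$. This is exactly the input used in \ref{3d cotwist}. I will reuse the argument of \ref{tor vanishing} as follows:
\begin{itemize}
\item the Auslander--Buchsbaum bound $\pdim_A B \leq d$ kills $\Ext^k_A(B,B)$ for $k<0$ and $k>d$;
\item idempotence of $\ker p$ gives $\Hom_A(\ker p, B) = 0$, hence $\Ext^1_A(B,B) = 0$;
\item when $d=3$, Serre duality \eqref{serre duality} gives $\Ext^2_A(B,B) \cong D\Ext^1_A(B,B) = 0$.
\end{itemize}
So the vanishing hypothesis of \ref{spherical condition 4} holds with the top degree equal to $d$. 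Then \ref{spherical condition 4} (via \ref{finite cotwist} and \ref{sph criterion add}) shows that $F$ is spherical, and \ref{3d cotwist} gives $\cC \cong [-d-1]\cdot\cN$.

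Second, for the commuting square I will use the standard identity for spherical functors from \cite[\S 1.3]{add}, namely $\cT \cdot F \cong F \cdot \cC[2]$. This is the same step as in the proof of \ref{permutes simples}. Substituting $\cC$ gives $\cT \cdot F \cong F\cdot \cN(-)[1-d]$. For $d=3$ this is exactly $\cN(-)[-2]$, as displayed.

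Third, I will read off the two formulas.
\begin{itemize}
\item Evaluating at ${}_B B$ gives $\cT(F(B)) = \Rderived{\Hom_A}(\ker p, {}_B B)$, and this is isomorphic to $F(\cN(B))[-2] = {}_B DB{}_A[-2]$; the bimodule structure is preserved because the isomorphisms above are functorial in bimodules by \cref{funct cones}.
\item When $B$ is self-injective, $\cN$ is exact, and $\cN(S_i) = S_i \otimes_B DB \cong S_{\sigma^{-1}(i)}$ by \cref{Nakayama perm}, since $DB \cong \bigoplus P_{\sigma^{-1}(i)}$. Evaluating the square at $S_i$ then gives the last formula.
\end{itemize}

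The main obstacle is the uniform shift $[-2]$ in the case $\dim\cR = 2$. There the route above produces $\cN(-)[1-d] = \cN(-)[-1]$. Moreover, the cotwist computation in \ref{inverse bimod} genuinely uses the top nonvanishing degree $d$: by \ref{tor ext duality}, $\Tor^A_d(B,DB) \cong \Hom_A(B,B) \neq 0$. The first thing I would try is to check whether, in the paper's convention, $d$ is effectively fixed at $3$ (as the statement of \ref{tor vanishing} and the proof of \ref{3d cotwist} suggest, since both use "$k \neq 0,3$"). If so, the displayed shift $[-2]$ follows directly and I would state the corollary for $d=3$. If $d=2$ is genuinely intended, the honest computation gives $[-1]$, and I cannot derive $[-2]$ there. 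In that case the correct uniform statement is $\cN(-)[1-d]$, with $[-2]$ holding exactly when $\dim\cR = 3$.
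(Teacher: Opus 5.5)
Your proposal is correct and follows the paper's proof, which simply combines \ref{spherical condition 4}, \ref{permutes simples} and \ref{3d cotwist}, with the Ext-vanishing supplied by \ref{tor vanishing}. Your reservation about $\dim \cR = 2$ is also well founded: this chain of results (yours and the paper's alike) only gives $\Ext^k_A(B,B)=0$ for $k\neq 0,d$ and $\cT\cdot F\cong F\cdot\cN(-)[1-d]$, and the triangle coming from $0\to\ker p\to A\to B\to 0$ directly gives $\Rderived{\Hom_A}(\ker p,B)\cong \Ext^d_A(B,B)[1-d]$, so the displayed shift $[-2]$ and the two formulas are right for $d=3$, while for $d=2$ the correct shift is $[-1]$.
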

\begin{proof}
    This follows directly from \ref{spherical condition 4}, \ref{permutes simples} and \ref{3d cotwist}.
\end{proof}

\begin{example} \label{nc twist first ex}
    Let $R$ be a noetherian complete local commutative Gorenstein ring with at worst isolated hypersurface singularities and $\dim R = 3$. Moreover, consider $M \in \CM R$ a maximal Cohen-Macaulay module with $\Ext^1_R(M, M) =0$, so that $\End_R(M) \in \CM R$. Assume further that $R$ is a summand of $M$.
    
    Write $\Lambda = \End_R(M)$ and let $[\add R]$ be the ideal of $\Lambda$ consisting of maps $M \to M$ which factor through $\add R$. Set $\Lambda_\con = \Lambda/[\add R] = \uEnd_R(M)$, $\Lambda_\con = \Lambda/[\add R] = \uEnd_R(M)$. We will show that the natural surjection $\pi \colon \Lambda \to \Lambda_\con$ satisfies the assumptions of \ref{3d spherical}.
    
    Since CM modules are reflexive and $M \in \CM R$, observe that $\Lambda$ is a Gorenstein $R$-order. To see this, we refer to the results \cite[2.22(2)]{IW1} and \cite[3.8 (1)$\Rightarrow$(3)]{IR}. Since $\Lambda$ is module finite, it satisfies \cref{module fin}.  
    
    Next, we claim that $(\ker \pi)^2 = \ker \pi$. Well, $\ker \pi = [\add R] = \Lambda e_0 \Lambda$, where $e_0 \colon M \to R \hookrightarrow M$ is the idempotent corresponding to the projection onto the summand $R$ of $M$. It is easy to check that $(\Lambda e_0 \Lambda)^2 = \Lambda e_0 \Lambda$. Additionally, $\Lambda_\con \in \Kb(\proj \Lambda)$ by \cite[A.7(3)]{Wem01}. It remains to show that $\Lambda_\con$ is self-injective. This follows from \cite[7.1]{BIKR}.
    
     Whence, we may apply \ref{3d spherical}, and we may conclude that \ref{3d spherical} extends \cite[5.11,5.12]{DW1}. 
\end{example}

\section{Applications to skew group algebras and Hilbert schemes}

\subsection{Quotient singularities and skew group algebras.} \label{sec:mckay}

This section briefly recalls some aspects of the \textit{special McKay correspondence} in dimension $3$. This correspondence provides some technology to construct examples of varieties which are derived equivalent to Gorenstein orders.

\begin{setup} \label[setup]{sl mckay setup}
    Consider a finite abelian group $G \subset \SL(3, \C)$ acting on the $\C$-algebra $R = \C[x, y, z]$. Let $R^G$ denote the ring of invariants and set $\C^3/G \colonequals \spec R^G$. 
\end{setup}

\medskip
It turns out that $\spec R^G$ admits a crepant resolution (known as $G$-$\mathrm{Hilb}(\C^3)$) \cite{Nakamura} which, by a result of \cite{BKR}, is derived equivalent to the \textit{skew-group algebra}. 

\begin{definition}
    Recall that the \textit{skew-group} algebra $R\#G$ is the vector space $R \otimes_\C \C G$ equipped with a "skewed" multiplication rule. That is, for $r_1 \otimes g_1, r_2 \otimes g_2 \in R \otimes_\C \C G$, 
    \[ (r_1 \otimes g_1)(r_2 \otimes g_2) \colonequals (r_1 g_1(r_2)) \otimes g_1 g_2. \]
\end{definition}

In other words, there is a triangulated equivalence
\[ \Phi_G \colon  \Db(\modu R\#G) \tow{\sim} \Db(\coh G\mathrm{-Hilb}(\C^3)). \]
 Thus, autoequivalences $\Db(\modu R\#G)$ induce autoequivalences of $\Db(\coh G\mathrm{-Hilb}(\C^3))$. 
 
In order to compute examples, we are, therefore, interested in presenting $R\#G$ as a quiver with relations. This can be done via the \textit{Mckay quiver} \cite{Mck}, which can be constructed as follows. First, note that under \cref{sl mckay setup}, since $G$ is abelian, it is possible to choose a basis of $\C^3$ which diagonalises the action of $G$ on $\C^3$. Thus, we may assume that each $g \in G$ is a diagonal matrix.

\begin{notation} \label{mckay notation}
    Assume \cref{sl mckay setup}.
    For each $i = 1, 2, 3$, let $\chi_i \colon G \to \C$ denote the one-dimensional representation of $G$ defined by setting $\chi_i(g)$ to be the $i$th diagonal element of $g$ for each $g \in G$. Moreover, let $\{\rho_0, \rho_1, \dotsb, \rho_m\}$ be the complete set of irreducible $\C$-representations of $G$. 
\end{notation}

\begin{prop}[See e.g.\ {\cite{BSW}}]
    Under \cref{sl mckay setup} and with notation as in \ref{mckay notation}, the McKay quiver of $G$ can be described as the directed graph with vertices $\rho_i$ and arrows
    \begin{align*}
        &  \chi_1 \otimes \rho_k \tow{x_{\rho_k} }\rho_k, \\
        &  \chi_2 \otimes \rho_k \tow{y_{\rho_k} }\rho_k. \\
        &  \chi_3 \otimes \rho_k \tow{z_{\rho_k} }\rho_k. 
    \end{align*}
    for all $0 \leq k \leq m$.
\end{prop}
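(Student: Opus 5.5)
The McKay quiver has vertices $\rho_0,\dots,\rho_m$ and one arrow $\rho_j \to \rho_k$ for each copy of $\rho_j$ inside $V \otimes \rho_k$, where $V = \C^3$ is the defining representation. The plan is to compute these multiplicities explicitly in the abelian diagonal case.

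First, since $G$ is abelian, every irreducible $\rho_k$ is one-dimensional. Having diagonalised the action as in \cref{mckay notation}, the defining representation splits as
\[ V \cong \chi_1 \oplus \chi_2 \oplus \chi_3, \]
where $\chi_i$ is the character on the $i$th coordinate line. Hence
\[ V \otimes \rho_k \cong (\chi_1 \otimes \rho_k) \oplus (\chi_2 \otimes \rho_k) \oplus (\chi_3 \otimes \rho_k). \]
Each summand $\chi_i \otimes \rho_k$ is again a one-dimensional character, so it equals some $\rho_j$. Counting multiplicities therefore gives exactly three arrows ending at $\rho_k$, one for each $i = 1, 2, 3$, each starting at $\chi_i \otimes \rho_k$. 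These are the arrows labelled $x_{\rho_k}$, $y_{\rho_k}$ and $z_{\rho_k}$ in the statement.

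Second, the labels and directions should be matched with the skew group algebra, since this is what later computations with $R\#G$ rely on. Take the primitive idempotents $e_{\rho} = \frac{1}{|G|}\sum_g \rho(g^{-1}) g \in \C G \subset R\#G$. Then $e_{\rho_j}(R\#G)e_{\rho_k}$ in degree one is spanned by those linear forms $x, y, z$ that transform by the appropriate character. Concretely, one checks that $x\, e_{\rho_k} = e_{\chi_1\otimes\rho_k}\, x$ (or with the dual character, depending on whether $G$ acts on $V$ or on $V^*$). This shows that $x e_{\rho_k}$ is a path from $\chi_1 \otimes \rho_k$ to $\rho_k$, and similarly for $y$ and $z$.

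The main obstacle is this convention bookkeeping. The action $g * f(x,y,z) = f((x,y,z)g)$, together with the paper's composition convention ("$\alpha$ then $\beta$" written $\alpha\beta$) and its use of right modules, determines whether the source is $\chi_i \otimes \rho_k$ or $\chi_i^{-1} \otimes \rho_k$. I would fix this by computing $g \cdot x \cdot g^{-1} = \chi_1(g)\, x$ directly in $R\#G$ and then reading off the idempotent identity.

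Since the statement is cited from the literature (\cite{BSW}), once the conventions are aligned it suffices to invoke \cite{BSW}, or the general McKay quiver definition of \cite{Mck}, for the remaining identification.
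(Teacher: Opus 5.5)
Your argument is correct. The only difference from the paper is that the paper gives no argument at all, just the pointer to \cite{BSW}.

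Your first paragraph is already a complete proof. Fix the convention that the McKay quiver has one arrow $\rho_j \to \rho_k$ for each copy of $\rho_j$ in $V \otimes \rho_k$. The splitting $V \otimes \rho_k \cong \bigoplus_{i=1}^{3} \chi_i \otimes \rho_k$ into characters then gives exactly the listed arrows. Repeated sources are allowed, e.g.\ $\chi_1 = \chi_2$ in \ref{mckay example 2}. So the closing appeal to \cite{BSW} or \cite{Mck} is not needed.

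The comparison with $R\#G$ is not required for this statement; it really belongs to \ref{aus t}. It is, however, what your route gains over the bare citation. Another common convention reverses every arrow, so the direction in the statement is a choice. Your computation shows it is the choice compatible with the paper's action $g * f = f((x,y,z)g)$ and its right-module conventions.

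Your hedge resolves in favour of $\chi_1 \otimes \rho_k$, not its dual. Since $g * x = \chi_1(g)\,x$, we have $g\,x = \chi_1(g)\,x\,g$ in $R\#G$, hence
\[ x\, e_{\rho} = \tfrac{1}{|G|} \textstyle\sum_{g} \rho(g^{-1})\chi_1(g^{-1})\, g\, x = e_{\chi_1 \otimes \rho}\, x. \]
Therefore $x e_{\rho_k} \in e_{\chi_1 \otimes \rho_k} (R\#G) e_{\rho_k}$. With $P(i) = e_i KQ/I$ and the composition convention ``$\alpha$ then $\beta$'' $= \alpha\beta$, this is a path from $\chi_1 \otimes \rho_k$ to $\rho_k$. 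That matches the statement and the worked examples, e.g.\ $x_k \colon k+1 \to k$ when $\chi_1 = \rho_1$.
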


\begin{prop}[{\cite{IT,BSW, CMT, LW}}] \label{aus t}
    Let $G$ and $R$ be as in \cref{sl mckay setup}. Then, the following hold.
    \begin{enumerate}
        \item There is an isomorphism $\alpha \colon R\#G \to \End_{R^G}(R)$ sending $r_1 \otimes g_1 \in R\#G$ to the endomorphism $\mu_{r_1 \otimes g_1} \colon R \to R$ defined by the rule $\mu_{r_1 \otimes g_1}(r) = r_1 g_1(r)$ for all $r \in R$. \label{aus 1}
        \item \label{aus 2} \label{aus 3} $R\#G$ can be presented, up to a Morita equivalence, as the path algebra of the McKay quiver, subject to the relations 
        \[ \langle \{ x_{\rho_k} y_{\chi_1 \otimes \rho_k} - y_{\rho_k} x_{\chi_2 \otimes \rho_k}, x_{\rho_k} z_{\chi_1 \otimes \rho_k} - z_{\rho_k} x_{\chi_3 \otimes \rho_k}, y_{\rho_k} z_{\chi_2 \otimes \rho_k} - z_{\rho_k} y_{\chi_3 \otimes \rho_k} \, \forall \, 1 \leq k \leq m \}\rangle. \] 
    \end{enumerate}
\end{prop}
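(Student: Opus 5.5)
Part (1) is classical. First I check that the rule $r_1 \otimes g_1 \mapsto \mu_{r_1 \otimes g_1}$ respects multiplication. Using the skew product,
\[ \mu_{(r_1 \otimes g_1)(r_2 \otimes g_2)}(r) = r_1 g_1(r_2)\, (g_1g_2)(r) = r_1 g_1\bigl(r_2 g_2(r)\bigr) = \mu_{r_1\otimes g_1}\bigl(\mu_{r_2 \otimes g_2}(r)\bigr), \]
so, with the composition convention fixed in the paper, $\alpha$ is an algebra map. Each $\mu_{r \otimes g}$ is $R^G$-linear because $g$ fixes invariants.

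Injectivity follows from Dedekind's independence of characters applied to the distinct automorphisms $g$ of the field $\operatorname{Frac} R$: if $\sum_g r_g\, g(-) = 0$ on $R$, then all $r_g = 0$.

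For surjectivity I would use the standard reflexivity argument, since $G \subset \SL(3,\C)$ contains no pseudo-reflections. Both $R\#G$ and $\End_{R^G}(R)$ are reflexive $R^G$-modules. The first is free over $R$, hence CM over $R^G$. The second is an endomorphism ring of a reflexive module, hence reflexive. The map $\alpha$ is an isomorphism in codimension one: at height-one primes of $R^G$ the extension $R^G \to R$ is étale, because there are no pseudo-reflections, and Galois descent gives $R_{\mathfrak p}\#G \cong \End_{R^G_{\mathfrak p}}(R_{\mathfrak p})$. A map of reflexive modules over a normal domain that is an isomorphism in codimension one is an isomorphism. This is Auslander's theorem, and I would mainly cite it (\cite{IT,LW}).

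For part (2) I proceed in three steps.

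\textbf{Step 1: the idempotents.} Since $G$ is abelian, $\C G \cong \prod_k \C$ with primitive idempotents
\[ e_{\rho_k} = \tfrac{1}{|G|}\sum_g \rho_k(g)^{-1} g. \]
These are pairwise orthogonal, each $e_{\rho_k}(R\#G)$ is indecomposable projective, and every irreducible occurs exactly once because all $\rho_k$ are one-dimensional. So $R\#G$ is already basic, and the Morita equivalence is in fact an isomorphism with $\bigoplus_{k,l} e_{\rho_k}(R\#G)e_{\rho_l}$.

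\textbf{Step 2: the arrows.} I compute $e_{\rho_k}(R\#G)e_{\rho_l} \cong (R \otimes \rho)^G$-isotypic pieces; concretely, this is the space of polynomials of weight $\rho_l\rho_k^{-1}$, up to the paper's convention. The variables $x,y,z$ have weights $\chi_1,\chi_2,\chi_3$. Hence $x\, e_{\rho_k}$ gives an arrow $\chi_1\otimes\rho_k \to \rho_k$, or the reverse depending on conventions, and similarly for $y$ and $z$. These elements generate the algebra over the idempotents, because $R$ is generated by $x,y,z$. This produces a surjection from the McKay quiver path algebra onto $R\#G$.

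\textbf{Step 3: the relations (main obstacle).} The commutation relations $xy=yx$, $xz=zx$, $yz=zy$ in $R$, read between the correct vertices, give exactly the listed relations, so the surjection factors through the quotient by them. The hard part is showing that the kernel is no larger. I would argue with a graded dimension count: the quotient by the commutativity relations has, between vertices $\rho_k$ and $\rho_l$, a basis of monomials $x^ay^bz^c$ of the right weight. This is because the relations let every path be rewritten into a normal form $x$'s, then $y$'s, then $z$'s, and such normal forms are distinct. This matches the monomial basis of the $(k,l)$ piece of $R\#G$, so the surjection is an isomorphism. Equivalently, one can recognise the quiver with these relations as the Jacobi algebra of the superpotential $W = xyz - xzy$ summed over vertices, as in \cite{BSW}, and cite it.
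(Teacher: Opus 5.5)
Your proposal is correct, and it does more than the paper, whose proof consists only of citations: (1) is \cite[5.15]{LW} (see also \cite[4.2]{IT}), and (2) is deduced from \cite{CMT} and \cite[3.2, 4.1]{BSW}.

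For (1), your argument is Auslander's proof, which is what the cited result contains, so there is no real divergence. You use Dedekind independence for injectivity, then reflexivity of both sides together with the fact that $R^G \to R$ is \'etale in codimension one because there are no pseudo-reflections.

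For (2), you take a genuinely different and more elementary route that exploits $G$ being abelian:
\begin{enumerate}
\item $R\#G$ is already basic, with primitive idempotents $e_{\rho_k}$.
\item The arrows are $xe_{\rho_k}$, $ye_{\rho_k}$, $ze_{\rho_k}$.
\item The kernel is exactly the commutation ideal, because sorted paths span the quotient and map bijectively onto the monomial basis of $e_{\rho_k}(R\#G)e_{\rho_l}$.
\end{enumerate}
The last step is sound. Note, however, that linear independence comes from the matching with the monomial basis (a spanning set mapping bijectively onto a basis is a basis). The distinctness of normal forms by itself only gives spanning.

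Your route buys a self-contained proof that, for abelian $G$, the Morita equivalence is an actual isomorphism, with the arrow directions determined explicitly. The citations buy generality: \cite{BSW} treats arbitrary finite $G \subset \SL(3,\C)$, where $R\#G$ need not be basic and the superpotential (Jacobi algebra) description is needed. This is why the statement is phrased up to Morita equivalence.

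Two small cautions:
\begin{enumerate}
\item Your rewriting needs the commutation relations at every vertex, including $\rho_0$, so the range $1 \le k \le m$ in the statement must be read as $0 \le k \le m$.
\item The displayed relations are composable only when the right-hand arrow is traversed first. This is opposite to the paper's stated path convention, and your phrase ``read between the correct vertices'' silently absorbs it.
\end{enumerate}
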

\begin{proof}
    Statement \eqref{aus 1} is \cite[5.15]{LW} (see also \cite[4.2]{IT}). Statement \eqref{aus 2} follows from \cite{CMT}, as well as \cite[3.2]{BSW} and \cite[4.1]{BSW}.
\end{proof}

\begin{example} \label{mckay example}
    Let $\epsilon_3$ be a primitive third root of unity, and let $G \subset \SL(3, \C)$ be the group 
    \[ G = \left \langle \begin{pmatrix} \epsilon_3 & 0 & 0 \\ 0 & \epsilon_3 & 0 \\ 0 & 0 & \epsilon_3 \end{pmatrix}\right \rangle \]
    acting on $R = \C[x,y,z]$ as $g * f(x, y, z)  = f\left( (x, y, z) g \right)$. Then, the McKay quiver of $G$ is 
\vspace{-3em}
\[
\begin{tikzpicture}
\draw (-4,2) node [circle] {$Q:$};

\foreach \b [count=\a] in {0,...,2}{%
\draw (\a*360/3-30: 1.5cm) node [circle, text width=5mm, align=center] (\b) {\b};
}
\foreach \b [count=\a] in {3,...,5}{%
\draw (\a*360/3-30: 2.5cm) node [circle, text width=20.5mm, align=center] (\b) {};
}
\foreach \b [count=\a] in {6,...,8}{%
\draw (\a*360/3-30: 1.9cm) node [circle, text width=10mm, align=center] (\b) {};
}
\foreach \b [count=\a] in {9,...,11}{%
\draw (\a*360/3-30: 1.3cm) node (\b) {};
}

\draw [draw = magenta, ->, thick] (3) edge node[pos=0.5, fill=white, inner sep=1mm,sloped] {\footnotesize $x_{2}$} (5); 
\draw [draw = magenta, ->, thick] (5) edge node[pos=0.5, fill=white, inner sep=1mm,sloped] {\footnotesize $x_{1}$} (4);
\draw [draw = magenta, ->, thick] (4) edge node[pos=0.5, fill=white, inner sep=1mm,sloped] {\footnotesize $x_{0}$} (3); 

\draw [draw = MidnightBlue, ->, thick] (6) edge node[pos=0.5, fill=white, inner sep=1mm,sloped] {\footnotesize $y_{2}$} (8); 
\draw [draw = MidnightBlue, ->, thick] (8) edge node[pos=0.5, fill=white, inner sep=1mm,sloped] {\footnotesize $y_{1}$} (7);
\draw [draw = MidnightBlue, ->, thick] (7) edge node[pos=0.5, fill=white, inner sep=1mm,sloped] {\footnotesize $y_{0}$} (6); 

\draw [draw = ForestGreen, ->, thick] (9) edge node[pos=0.5, fill=white, inner sep=1mm,sloped] {\footnotesize $z_{2}$} (11); 
\draw [draw = ForestGreen, ->, thick] (11) edge node[pos=0.5, fill=white, inner sep=1mm,sloped] {\footnotesize $z_{1}$} (10);
\draw [draw = ForestGreen, ->, thick] (10) edge node[pos=0.5, fill=white, inner sep=1mm,sloped] {\footnotesize $z_{0}$} (9); 
\end{tikzpicture}
\]
\vspace{-3em}

Thus, The skew group algebra $R\#G$ can be presented as the path algebra of $Q$ over $\C$ subject to the relations
    \begin{align*} I = \langle & x_i y_{i-1} - y_i x_{i-1}, x_i z_{i-1} - z_i x_{i-1}, y_i z_{i-1} - z_i y_{i-1} \rangle_{i \in \Z_3}.
    \end{align*}
   These relations intuitively mean that the arrows corresponding to $x, y, z$ "commute". 
\end{example}

\begin{example} \label{mckay example 2}
    Let $\epsilon_7$ be a primitive seventh root of unity, and let $G' \subset \SL(3, \C)$ be the group 
    \[ G' = \left \langle \begin{pmatrix} \epsilon_7 & 0 & 0 \\ 0 & \epsilon_7 & 0 \\ 0 & 0 & \epsilon_7^5 \end{pmatrix}\right \rangle. \]
    acting on $R$ as $g * f(x, y, z)  = f\left( (x, y, z) g \right)$. Then, the McKay quiver of $G'$ is 
\[
\begin{tikzpicture}

\draw (-5,3) node [circle] {$Q':$};

\foreach \b [count=\a] in {0,...,6}{%
\draw (\a*360/7: 3cm) node [circle, text width=5mm, align=center] (\b) {\b};
}
\foreach \b [count=\a] in {7,...,13}{%
\draw (\a*360/7: 3.2cm) node [circle, text width=7.5mm, align=center] (\b) {};
}
\foreach \b [count=\a] in {14,...,20}{%
\draw (\a*360/7: 2.9cm) node [circle, text width=5mm, align=center] (\b) {};
}
\foreach \b [count=\a] in {21,...,27}{%
\draw (\a*360/7: 2.8cm) node (\b) {};
}

\foreach \i [count=\a] in {8,...,13}{%
\draw [draw = magenta, ->, thick] (\i) edge node[pos=0.5, fill=white, inner sep=1mm,sloped] {\footnotesize $x_{\inteval{\a-1}}$} (\inteval{\i-1}); 
}
\draw [draw = magenta, ->, thick] (7) edge node[pos=0.5, fill=white, inner sep = 1mm,sloped] {\footnotesize $x_{6}$}  (13);

\foreach \i [count=\a] in {15,...,20}{%
\draw [draw = MidnightBlue, ->, thick] (\i) edge node[pos=0.5, fill=white, inner sep=1mm,sloped] {\footnotesize $y_{\inteval{\a-1}}$} (\inteval{\i-1}); 
}
\draw [draw = MidnightBlue, ->, thick] (14) edge node[pos=0.5, fill=white, inner sep=1mm,sloped] {\footnotesize $y_{6}$}  (20);

\foreach \i [count=\a] in {21,...,25}{%
\draw [draw = ForestGreen, ->, thick] (\i) edge node[pos=0.5, fill=white, inner sep=1mm,sloped] {\footnotesize $z_{\inteval{\a+1}}$}(\inteval{\i+2}); 
}
\draw [draw = ForestGreen, ->, thick] (26) edge node[pos=0.5, fill=white, inner sep=1mm,sloped] {\footnotesize $z_{0}$} (\inteval{21}); 
\draw [draw = ForestGreen, ->, thick] (27) edge node[pos=0.5, fill=white, inner sep=1mm,sloped] {\footnotesize $z_1$} (\inteval{22}); 
\end{tikzpicture}
\]
Therefore, the skew group algebra $R\#G'$ can be presented as the path algebra of $Q'$ over $\C$ subject to the relations
\begin{align*} I' = \langle & x_i y_{i-1} - y_i x_{i-1}, x_i z_{i+2} - z_{i+3} x_{i+2}, y_i z_{i+2} - z_{i+3} y_{i+1} \rangle_{i \in \Z_7}.
\end{align*}
\end{example}

\subsection{New derived autoequivalences of skew group algebras} \label{new derived equivalences skew}

The goal of this section is to use the results of \cref{symmetric alg} in order to construct derived autoequivalences for skew-group algebras $R\#G$. To ease notation, let $S := R^G$ throughout, and consider the following set up.

\begin{setup} \label[setup]{quot setup}
    Under \cref{sl mckay setup}, suppose further that there is a ring surjection $p \colon R\#G \to B$, where $B$ is a finite dimensional $\C$-algebra which either is self-injective or has finite global dimension. Finally, suppose that $\ker p = (\ker p)^2$.
\end{setup}

\begin{notation}
    Let $\cR$ be a commutative ring. For each $\idlm \in \maxspec \cR$, let $\cR_{\idlm}$ denote the localisation of $\cR$ at $\idlm$. Moreover, given an $\cR$-module, let $M_{\idlm} = M \otimes_{\cR} \cR_{\idlm}$ denote its localisation at $\idlm$. 
\end{notation}

\subsubsection{Derived autoequivalence of $(R\#G)_\idlm$}

Since the results in \cref{symmetric alg} are stated for local rings, we consider maximal ideals $\idlm \in \maxspec R^G$ and first construct a spherical twist for the $S_\idlm$-algebra $(R\#G)_{\idlm}$. Thus, our first goal is to show that given $p \colon R\#G \to B$ as in \cref{quot setup}, then $p_{\idlm} \colon (R\#G)_{\idlm} \to B_{\idlm}$ satisfies the assumptions of \ref{3d spherical}.

\begin{remark} \label{gorenstein remark}
    Observe that since $G \subset \SL(3, \C)$, then $S$ is a Gorenstein ring \cite[6.4.9]{BH}. It follows that, for $\idlm \in \maxspec S$, then $S_{\idlm}$ is also Gorenstein.   
\end{remark}

\begin{lemma}[{\cite[5.16]{LW}}] \label{R is CM}
    Under \cref{quot setup}, let $\idlm \in \maxspec S$. Then, $R_{\idlm} \in \CM S_{\idlm}$
\end{lemma}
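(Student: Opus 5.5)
The plan is to show that $R$ is a maximal Cohen--Macaulay module over the invariant ring $S = R^G$ before localising, and then to observe that localisation preserves this property.

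\emph{Step 1: $R$ is module finite over $S$.} Since $G$ is finite, every $r \in R$ is a root of the monic polynomial $\prod_{g \in G}(t - g(r))$, whose coefficients lie in $S$. So $R$ is integral over $S$. It is also finitely generated as an $S$-algebra, being generated by $x,y,z$. Hence $R$ is a finitely generated $S$-module. By Noether's theorem $S$ is noetherian, and $\dim S = \dim R = 3$. Localising, $R_\idlm$ is a finitely generated $S_\idlm$-module.

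\emph{Step 2: depth.} A homogeneous system of parameters for $S$ can be chosen, for instance from the power sums $x^{|G|}, y^{|G|}, z^{|G|}$, which lie in $S$ because $G$ acts diagonally. Alternatively one may take any homogeneous s.o.p.\ of the graded ring $S$. Such a system is also a system of parameters for $R$, because $R$ is finite over $S$. Since $R = \C[x,y,z]$ is Cohen--Macaulay, every system of parameters of $R$ is an $R$-regular sequence. Therefore these three elements form an $R$-regular sequence lying in $S$, and $\depth_S R = 3 = \dim S$ at the graded maximal ideal.

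Passing to an arbitrary $\idlm \in \maxspec S$ is the only delicate point. One route is the standard fact that, for a finitely generated graded module over a positively graded ring, being Cohen--Macaulay at the irrelevant ideal implies being Cohen--Macaulay at all localisations (Bruns--Herzog). The route I would try first is more direct. Because $R$ is finite over $S$, the depth of $R_\idlm$ over $S_\idlm$ equals the minimum, over the finitely many maximal ideals $\mathfrak n$ of $R$ lying over $\idlm$, of $\depth R_{\mathfrak n}$. Each $R_{\mathfrak n}$ is regular of dimension $3$, so every term equals $3$ and hence $\depth_{S_\idlm} R_\idlm = 3$. Also $\dim S_\idlm = 3$, since $S$ is a finitely generated domain over $\C$ and all its maximal ideals have height $3$. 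It follows that $R_\idlm \in \CM S_\idlm$.

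The main obstacle I expect is justifying the depth comparison over the semilocal ring $R_\idlm$. To handle it I would use the characterisation via Ext: $\Ext^i_{S_\idlm}(S_\idlm/\idlm, R_\idlm)$ is a module over $R_\idlm$ and vanishes exactly when it vanishes after localising at every maximal ideal of $R_\idlm$. If that becomes cumbersome, I would instead fall back on citing \cite[5.16]{LW} or the graded-to-local result in \cite{BH}.
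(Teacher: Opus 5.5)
Your proof is correct, but it follows a different route from the paper's.

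\textbf{The paper's route.} The paper deduces the local statement from a global one. Since $G \subset \SL(3,\C)$, the ring $S_\idlm$ is Gorenstein, so a finitely generated $S_\idlm$-module is maximal Cohen--Macaulay if and only if $\Ext^i_{S_\idlm}(-,S_\idlm)$ vanishes for all $i>0$. Because $\Ext^i_{S_\idlm}(R_\idlm,S_\idlm)\cong \Ext^i_S(R,S)\otimes_S S_\idlm$, this reduces to the global statement $R\in\CM S$, which the paper simply imports from \cite[5.16]{LW}.

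\textbf{Your route and what it buys.} Your direct route computes $\depth_{S_\idlm}R_\idlm$ as $\min_{\mathfrak n}\depth R_{\mathfrak n}=3=\dim S_\idlm$. It uses only that $R$ is regular and module-finite over $S$, and that $S$ is an affine domain. So it is self-contained and needs neither the Gorenstein property nor the diagonal form of $G$; it works for any finite $G\subset \mathrm{GL}(3,\C)$. It also makes your graded system-of-parameters paragraph redundant, since that only treats the irrelevant ideal. The paper's version is shorter, but all the content sits in the citation.

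\textbf{The step you flag as the obstacle.} Localising $\Ext^i_{S_\idlm}(S_\idlm/\idlm, R_\idlm)$ at the maximal ideals $\mathfrak n$ is awkward, because $R_{\mathfrak n}$ is no longer finite over $S_\idlm$. The Ext characterisation of depth therefore does not apply to it directly. The clean justification goes through grade:
\begin{itemize}
\item The Koszul complex on a generating set of $\idlm$ computes both $\depth_{S_\idlm}R_\idlm$ and $\operatorname{grade}(\idlm R_\idlm, R_\idlm)$, since $R_\idlm$ is finite over both $S_\idlm$ and $R_\idlm$.
\item The latter equals $\min\{\depth R_{\mathfrak q} : \mathfrak q\supseteq \idlm R_\idlm\}$.
\item The primes $\mathfrak q$ containing $\idlm R_\idlm$ are exactly the maximal ideals of $R_\idlm$, because $R_\idlm/\idlm R_\idlm$ is Artinian.
\end{itemize}
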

\begin{proof}
  Since $R_{\idlm} \in \modu S_{\idlm}$ and, moreover, $S_{\idlm}$ is Gorenstein by \ref{gorenstein remark}, it suffices to show that $\Ext^i_{S_{\idlm}}(R_{\idlm}, S_{\idlm}) = 0$ for all $i >0$. Well, from \cite[2.15]{DW1},
    \[ \Ext^i_{S_{\idlm}}(R_{\idlm}, S_{\idlm}) = \Ext^i_{S}(R, S) \otimes_{S} S_{\idlm} \]
    where the right-hand side vanishes because $R \in \CM S$  by \cite[5.16]{LW}. 
  
  %For convenience of the reader we summarise the proof  of 5.16 LW here. 
  
  %First, $R$ is a finitely generated $R^G$-module by \cite[5.4]{LW}. Further, there is an injective ring homomorphism $\rho \colon R \to R\#G$ defined by
  %\[ \rho(r) = \frac{1}{|G|} \sum_{g \in G} g(r) \otimes g \]
  %whose image is $\Img \rho = (R\#G)^G$. Thus, $R \cong (R\#G)^G$ as rings. It is straightforward to check that this implies $R \cong (R\#G)^G$ as $R^G$-modules. 

  %Observe that, by \cite[5.6]{LW}, $R\#G$ is a projective $R$-module. Whence, since the fixed point functor $(-)^G$ is exact, $(R\#G)^G$ is a projective $R^G$-module, and so the isomorphism $R \cong (R\#G)^G$ implies that $R$ is a projective $R^G$-module. Hence, since $R$ is finitely generated and projective, we may conclude $R \in \CM R^G$.
\end{proof}

\begin{lemma} \label{self inj local}
    Under \cref{quot setup}, let $\mathfrak{m} \in \maxspec S$. Then, $B_{\idlm}$ is either self-injective or has finite global dimension. 
\end{lemma}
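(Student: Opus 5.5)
The plan is to show that each localisation $B_{\idlm}$ is either zero or a direct factor of $B$ as a ring. Both properties in question are inherited by ring direct factors and hold trivially for the zero ring, so the claim will follow from the hypothesis in \cref{quot setup}.

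First I set up the $S$-algebra structure. The subring $S = R^G$, embedded as $s \mapsto s \otimes 1$, is central in $R\#G$: for $s \in S$ we have $(r \otimes g)(s \otimes 1) = r g(s) \otimes g = rs \otimes g = (s \otimes 1)(r \otimes g)$. Hence $p$ makes $B$ an $S$-algebra with $S$ acting centrally, and $B_{\idlm} = B \otimes_S S_{\idlm}$ is again a ring, with a ring map $B \to B_{\idlm}$. Since $B$ is finite dimensional over $\C$, it has finite length as an $S$-module. Its support is therefore a finite set $\{\idlm_1, \dots, \idlm_r\} \subset \maxspec S$, and the standard decomposition of finite length modules over a commutative noetherian ring gives $B \cong \bigoplus_{i} B_{\idlm_i}$ as $S$-modules.

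Next I upgrade this to a ring decomposition. Let $I = \prod_i \idlm_i^{N}$ for $N \gg 0$, so that $I$ annihilates $B$. The Chinese remainder theorem gives $S/I \cong \prod_i S/\idlm_i^N$, which yields orthogonal idempotents $\bar e_i \in S/I$ summing to $1$. Their images $e_i \in B$ are central orthogonal idempotents, so $B \cong \prod_i e_i B$ as rings. Moreover, localising at $\idlm_j$ kills $e_i$ for $i \neq j$ and inverts the action of $S$ on $e_j B$, so $B_{\idlm_j} \cong e_j B$ as rings. For $\idlm \notin \{\idlm_i\}$ we get $B_{\idlm} = 0$.

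Finally I transfer the properties to each factor $eB$ with $e$ a central idempotent. If $B$ has finite global dimension, then $\modu eB$ is the full subcategory of $B$-modules annihilated by $1-e$. Projectives over $eB$ are exactly the $e$-parts of projective $B$-modules, and $\Ext_{eB} = \Ext_B$ on this subcategory, so $\gldim eB \le \gldim B$. If $B$ is self-injective, then $B = eB \oplus (1-e)B$ as $B$-modules, so $eB$ is an injective $B$-module. For $eB$-modules $M$, $\Hom_{eB}(M, eB) = \Hom_B(M, eB)$, so $eB$ is injective over itself. The zero ring satisfies either condition trivially.

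The only delicate step is the second one, identifying $B_{\idlm}$ with a ring direct factor rather than merely an $S$-module summand. The centrality of $S$ in $R\#G$, together with the Chinese remainder idempotents, is what makes this identification work.
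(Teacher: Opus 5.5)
Your proof is correct, and it takes a different route from the paper.

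The paper never decomposes $B$. It uses two facts:
\begin{itemize}
\item every finitely generated $B_{\idlm}$-module is of the form $\tilde M_{\idlm}$ with $\tilde M \in \modu B$;
\item the Ext-localisation formula $\Ext^i_{B_{\idlm}}(\tilde M_{\idlm}, \tilde N_{\idlm}) \cong \Ext^i_B(\tilde M, \tilde N) \otimes_S S_{\idlm}$, which it quotes from Donovan--Wemyss.
\end{itemize}
With these, vanishing of $\Ext^1_B(-,B)$, respectively of $\Ext^n_B(-,-)$ for $n$ large, passes directly to $B_{\idlm}$.

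You instead use that $S$ is central in $R\#G$ and apply the Chinese remainder theorem to get $B \cong \prod_i B_{\idlm_i}$ as rings. So each $B_{\idlm}$ is either zero or a block $eB$ cut out by a central idempotent, and you check that both properties pass to such factors.

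Your route is more elementary: it needs no Ext-localisation lemma, only the finite-length structure of $B$ over $S$. It also yields more. It shows that $B_{\idlm}$ is itself a quotient ring of $B$, so its finite-dimensionality comes for free. It also shows that $B$ is self-injective (resp.\ of finite global dimension) if and only if every $B_{\idlm}$ is.

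The paper's route, by contrast, does not rely on $B$ having finite length over $S$. Combined with Baer's criterion to test self-injectivity on finitely generated modules, the same argument works for any module-finite algebra over a commutative noetherian ring. That is the generality in which the later local-to-global statements of that section, such as \ref{local tilting implies global} and \ref{c local implies z local pre RG}, are formulated.
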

\begin{proof}
    Note that since $B$ is finite dimensional, so is $B_{\idlm}$ (see e.g. \cite[2.13]{Ei}). Therefore, $B_{\idlm}$ is self-injective, if $\Ext^1_{B_{\idlm}}(M, B_{\idlm})$ vanishes for all $M \in \modu  B_{\idlm}$. On the other hand, $B_{\idlm}$ has finite global dimension, if there is an $n \in \N$ such that $\Ext^n_{B_{\idlm}}(M, N)$ vanishes for all $M, N \in \modu B_{\idlm}$.

    Next, observe that localisation induces an essentially surjective functor $\modu B \to \modu B_{\idlm}$ so that for any $M \in \modu  B_{\idlm}$, there exists an $\tilde{M} \in \modu B$ with $M = \tilde{M}_{\idlm}$. 

    Suppose first that $B$ is self-injective. Then, by \cite[2.15]{DW1},
    \[ \Ext^i_{B_{\idlm}}(M, B_{\idlm}) = \Ext^i_{B_{\idlm}}(\tilde{M}_{\idlm}, B_{\idlm}) = \Ext^i_B(\tilde{M}, B) \otimes R_{\idlm}. \]
    The right-hand side vanishes because $B$ is self-injective, and we may conclude that $B_{\idlm}$ is self-injective. 
    
    If $B$ is not self-injective then it must have finite global dimension. Again,  by \cite[2.15]{DW1}, 
    \[ \Ext^i_{B_{\idlm}}(M, N) = \Ext^i_{B_{\idlm}}(\tilde{M}_{\idlm}, \tilde{N}_{\idlm}) = \Ext^i_B(\tilde{M}, \tilde{N}) \otimes R_{\idlm}. \]
    Since $B$ has finite global dimension, there is an $n$ such that $\Ext^i_B(\tilde{M}, \tilde{N}) \otimes R_{\idlm} = 0$. Whence, $B_{\idlm}$ has finite global dimension.
\end{proof}

\begin{prop} \label{RG satisfies local setup}
     Assuming \cref{quot setup}, let $\mathfrak{m} \in \maxspec S$ and consider the surjective morphism $p_{\idlm} \colon (R\#G)_{\idlm} \to B_{\idlm}$ induced by localisation at $\idlm$. Then, 
    \begin{enumerate}
        \item $(R\#G)_{\idlm}$ has global dimension equal to $3$, \label{finite gdim local}
        \item $(R\#G)_{\idlm}$ satisfies the assumptions of \cref{module fin}, \label{setup satisfied local}
        \item $p_{\idlm} \colon (R\#G)_{\idlm} \to B_{\idlm}$ satisfies the assumptions of \cref{sym inj set up}, \label{gorder local}
        \item $\ker p_{\idlm} = (\ker p_{\idlm})^2$. \label{ideal cond local}
    \end{enumerate}
\end{prop}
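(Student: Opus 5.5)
We treat the four items in order, since each feeds into the next; localisation at $\idlm$ is exact and commutes with the relevant Ext groups (\cite[2.15]{DW1}), which is the main tool throughout.

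For \eqref{finite gdim local}, we would use that $R\#G$ has global dimension $3$: $R$ has global dimension $3$ and $|G|$ is invertible in $\C$, so a standard averaging argument shows that $\Ext$ over $R\#G$ is the $G$-invariants of $\Ext$ over $R$. Localisation at $\idlm \in \maxspec S$ can only lower global dimension, since every $(R\#G)_\idlm$-module is a localisation of an $R\#G$-module and Ext localises by \cite[2.15]{DW1}, as in the proof of \ref{self inj local}. For equality we would exhibit a module with nonvanishing $\Ext^3$, namely a simple module supported at $\idlm$. Concretely, $(R\#G)_\idlm \cong \End_{S_\idlm}(R_\idlm)$ by \ref{aus t}\eqref{aus 1}, and $R_\idlm \in \CM S_\idlm$ by \ref{R is CM}, so $(R\#G)_\idlm$ is a nonsingular order of dimension $3$ in the sense of \cite{IW1}. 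Its global dimension then equals $\dim S_\idlm = 3$.

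For \eqref{setup satisfied local}, we note that $S_\idlm$ is a noetherian local Gorenstein ring (\ref{gorenstein remark}), hence CM with canonical module $S_\idlm$. It has dimension $3$ because $R$ is finite over $S$. Its residue field is $\C$ by the Nullstellensatz. Finally, $(R\#G)_\idlm$ is module finite because $R$ is finite over $S$ and $G$ is finite.

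For \eqref{gorder local}, we would show the Gorenstein order property $\Rderived{\Hom}_{S_\idlm}((R\#G)_\idlm, S_\idlm) \cong (R\#G)_\idlm$ as bimodules, along the lines of \ref{nc twist first ex}. We would use $\Lambda=\End_{S_\idlm}(R_\idlm)$ with $R_\idlm$ CM and $\Lambda \in \CM S_\idlm$ (it is a sum of copies of $R_\idlm$ as an $S_\idlm$-module), and then apply \cite[2.22(2)]{IW1} and \cite[3.8]{IR} to obtain $\omega_\Lambda \cong \Lambda$, since $\omega_{S_\idlm}=S_\idlm$. The finite-dimensionality of $B_\idlm$ and the dichotomy self-injective or finite global dimension come from \ref{self inj local}. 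Perfection $B_\idlm \in \Kb(\proj (R\#G)_\idlm)$ is immediate from \eqref{finite gdim local}, since every finitely generated module has a finite projective resolution by finitely generated projectives over a noetherian ring of finite global dimension.

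For \eqref{ideal cond local}, we use exactness of localisation: $\ker p_\idlm = (\ker p)_\idlm$. The product $(\ker p)^2$ is the image of $\ker p \otimes \ker p \to R\#G$, and localisation (central, flat) commutes with this image. Hence $(\ker p_\idlm)^2 = ((\ker p)^2)_\idlm = (\ker p)_\idlm$.

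The main obstacle we expect is the bimodule Gorenstein isomorphism in \eqref{gorder local}, together with the exact value $3$ in \eqref{finite gdim local}, as opposed to merely $\le 3$. If the citation route for the Gorenstein property is awkward, we would instead directly verify that the trace pairing $R\#G \times R\#G \to S$, $(a,b)\mapsto$ the $e$-coefficient of $ab$ followed by the Reynolds operator, is perfect.
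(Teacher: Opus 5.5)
Your proposal is correct and follows the paper's proof item by item. It uses the global dimension of $R\#G$ plus localisation for (1), finiteness of $R$ over $S$ for (2), the identification $(R\#G)_\idlm \cong \End_{S_\idlm}(R_\idlm)$ with $R_\idlm$ Cohen--Macaulay together with \cite[2.22(2)]{IW1} and \cite[3.8]{IR} for (3), and exactness of localisation for (4). Two of your extra checks fill in points the paper passes over: your Auslander--Buchsbaum (non-singular order) argument for the lower bound in (1), and your verification that $S_\idlm$ is local CM with canonical module and residue field $\C$. In particular, exactness and essential surjectivity of localisation alone only show that $(R\#G)_\idlm$ has global dimension at most $3$.
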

\begin{proof}
    \eqref{finite gdim local} It is a classical fact (see e.g.\ \cite[5.6]{LW}) that $R\#G$ has global dimension equal to $3$. Since localisation is exact and essentially surjective, necessarily $(R\#G)_{\idlm}$ has global dimension equal to $3$.

    \eqref{setup satisfied local} It suffices to show that $(R\#G)_{\idlm}$ is a module finite $S_{\idlm}$-algebra. It is clear that the $R$-module $R\#G = R \otimes_\C \C G$ is a finite free $R$-module. Moreover, by \cite[5.4]{LW}, $R$ is a finite $S$-module, and so it follows that $R\#G$ is a finite $S$-module. Since localisation is exact, it must be that  $(R\#G)_{\idlm}$ is a module finite $S_{\idlm}$-algebra, as required.

    \eqref{gorder local} It follows from \ref{self inj local} that $B_{\idlm}$ is either self-injective or has finite global dimension. Moreover, \eqref{finite gdim local} implies that $B_{\idlm} \in \Kb(\proj (R\#G)_{\idlm})$. Thus, it suffices to prove that  $(R\#G)_{\idlm}$ is a Gorenstein $S_{\idlm}$-order. 
    
    By \ref{aus t}\eqref{aus 1}, $R\#G \cong \End_{S}(R)$. It thus follows from \cite[2.15]{DW1} that there is an isomorphism $(R\#G)_{\idlm} \cong \End_{S_{\idlm}}(R_{\idlm})$. Given this fact, and by the results \cite[2.22(2)]{IW1} and \cite[3.8 (1)$\Rightarrow$(3)]{IR}, then $(R\#G)_{\idlm} \cong \End_{S_{\idlm}}(R_{\idlm})$ is a Gorenstein $S_{\idlm}$-order if $R_{\idlm}$ is a reflexive $S_{\idlm}$-module and $\End_{S_{\idlm}}(R_{\idlm}) \in \CM S_{\idlm}$.

    Indeed, by \ref{R is CM}, $R_{\idlm} \in \CM S_{\idlm}$, and CM modules are reflexive. Thus, $R_{\idlm}$ is a reflexive $S_{\idlm}$-module. Moreover, by the arguments in the proof of \eqref{setup satisfied local}, $(R\#G)_{\idlm} \cong \End_{S_{\idlm}}(R_{\idlm})$ is a finite direct sum of copies of $R_{\idlm}$. Therefore, since $R_{\idlm} \in \CM S_{\idlm}$, then $\End_{S_{\idlm}}(R_{\idlm}) \in \CM S_{\idlm}$. Thus \cite[2.22(2)]{IW1} and \cite[3.8 (1)$\Rightarrow$(3)]{IR} imply \eqref{gorder local}.

    \eqref{ideal cond local} Since localisation is exact, $\ker p_{\idlm} = (\ker p)_{\idlm}$. For brevity, let $\ker p = I$. By assumption, $I = I^2$, so that that $I_{\idlm} = (I^2)_{\idlm}$. It follows straightforwardly from the definition of localisation of modules that $(I^2)_{\idlm} = (I_{\idlm})^2$, as required. 
\end{proof}

 Under \cref{quot setup}, with \ref{RG satisfies local setup} in hand, we are able to specify the twist and cotwist in the local setting. 

 \begin{notation} \label{beta not}
     Let $\idlm \in \maxspec R$. Observe that, as in \eqref{beta remark}, we may consider the morphisms $s \cdot \beta \colon  B \Dtensor_A B \to B$ in $\Dcat(\Mod B \otimes_\Z B^\op)$ and $s' \cdot \beta' \colon  B_{\idlm} \Dtensor_{A_{\idlm}} B_{\idlm} \to B_{\idlm}$ in $\Dcat(\Mod B_{\idlm} \otimes_\Z B_{\idlm}^\op)$.
 \end{notation}

\begin{cor} \label{local tilting RG}
    Under \cref{quot setup}, let $\mathfrak{m} \in \maxspec S$. Then,  the functor
    \[ F_{\idlm} \colon \Db(\modu B_\idlm) \to \Db(\modu (S\#G)_\idlm) \]
    is spherical. The twist and cotwist around $F_{\idlm}$ are
    \begin{align}
        & \cT_{\idlm} \colonequals \Rderived{\Hom}_{(R\#G)_{\idlm}}((\ker p)_{\idlm}, -) \colon \Db(\modu (R\#G)_{\idlm}) \to \Db(\modu (R\#G)_{\idlm}), \\
        & \cC_{\idlm} \colonequals  \Rderived{\Hom_B}( C(s' \cdot \beta'), -) \colon \Db(\modu B_{\idlm}) \to \Db(\modu B_{\idlm}),
    \end{align}
    respectively.
\end{cor}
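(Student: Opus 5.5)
The plan is to apply \ref{3d spherical} to the localised surjection $p_{\idlm} \colon (R\#G)_{\idlm} \to B_{\idlm}$. All of its hypotheses have been checked in \ref{RG satisfies local setup}. By \eqref{setup satisfied local} and \eqref{gorder local} there, $(R\#G)_{\idlm}$ is a module finite Gorenstein $S_{\idlm}$-order over the local Gorenstein (hence CM, with canonical module $S_{\idlm}$) ring $S_{\idlm}$. Moreover, $B_{\idlm}$ is perfect over $(R\#G)_{\idlm}$, and it is finite dimensional and either self-injective or of finite global dimension. By \eqref{ideal cond local}, $\ker p_{\idlm} = (\ker p_{\idlm})^2$.

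The one point not explicitly recorded is that $\dim S_{\idlm} \in \{2,3\}$, as \ref{3d spherical} requires. Since $R$ is finite over $S$, we have $\dim S = 3$, and $S$ is a finitely generated domain over $\C$, so every maximal ideal has height $3$. Hence $\dim S_{\idlm} = 3$. A second point is that $B_{\idlm}$ must be finite dimensional over the residue field. If $\idlm \notin \Supp_S B$ then $B_{\idlm}=0$ and everything is trivial. Otherwise $B$ is finite length over $S$, and $B_{\idlm}$ is a finite dimensional $\C$-algebra whose residue field is $\C$. I will simply note this, since the paper treats $K$-algebras with $K=\C$.

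With the hypotheses in place, \ref{3d spherical} gives that $F_{\idlm}$ is spherical. For the twist, \ref{twist for epi} applied to the surjection $p_{\idlm}$ identifies the twist with $\Rderived{\Hom}_{(R\#G)_{\idlm}}(\ker p_{\idlm}, -)$. Because localisation is exact, $\ker p_{\idlm} = (\ker p)_{\idlm}$, which gives the stated formula. For the cotwist, \ref{cotwist gen} applied to $p_{\idlm}$, with $s' \cdot \beta'$ as in \ref{beta not}, gives $\cC_{\idlm} = \Rderived{\Hom}_{B_{\idlm}}(C(s' \cdot \beta'), -)$. By \ref{3d cotwist} this is moreover $[-4]\cdot\cN$.

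The only delicate step is ensuring these functors restrict to $\Db(\modu -)$ rather than living on the unbounded categories. For this I will invoke \ref{spherical condition 4}, which shows that $F^{\RA}$ and $F^{\LA}$ preserve bounded complexes. I will also use \ref{C is bounded}, which shows that $C(s'\cdot\beta')$ is bounded, so the cotwist is a shift of the Nakayama equivalence on $\Db(\modu B_{\idlm})$. Finally, since $(R\#G)_{\idlm}$ has finite global dimension by \eqref{finite gdim local}, the twist preserves $\Db(\modu (R\#G)_{\idlm})$.
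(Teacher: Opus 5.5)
Your proposal is correct and follows the paper's argument. The paper likewise checks the hypotheses of \ref{3d spherical} for $p_{\idlm}$ via \ref{RG satisfies local setup} and then identifies $(\ker p)_{\idlm}$ with $\ker p_{\idlm}$ by exactness of localisation; your extra checks that $\dim S_{\idlm}=3$ and that $B_{\idlm}$ is finite dimensional over the residue field $\C$ fill in details the paper leaves implicit.
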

\begin{proof}
    It follows from \ref{RG satisfies local setup} that the surjection $p_{\idlm} \colon (R\#G)_{\idlm} \to B_{\idlm}$ satisfies the assumptions of \ref{3d spherical}. Hence, the statement follows by noting that $(\ker p)_\idlm = \ker p_\idlm$ since localisation is exact. 
\end{proof}

Since the cotwist around $F_{\idlm}$ is controlled by $C(s' \cdot \beta')$, our goal in the remainder of this subsection is to prove that $C(s' \cdot \beta') \cong C(s \cdot \beta)_{\idlm}$.

\begin{lemma} \label{localisation commutes with tensor}
    Let $\cR$ be a commutative noetherian ring, and suppose that $\Lambda$ and $\Gamma$ are  module finite $\cR$-algebras. Let $M \in \modu \Gamma \otimes_{\cR} \Lambda^\op $ and $N \in \modu \Lambda \otimes_{\cR} \Gamma^\op$. Then, for each $\idlm \in \maxspec \cR$, $(M \Dtensor_{\Gamma} N)_{\idlm} \cong M_{\idlm} \Dtensor_{\Gamma_{\idlm}} N_{\idlm}$ in $\Dcat(\Mod \Lambda_{\idlm} \otimes_{\cR} \Lambda_{\idlm}^\op)$. 
\end{lemma}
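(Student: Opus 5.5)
We prove the statement by computing both sides with a single resolution, and then using that localisation is exact and compatible with the tensor product. First we choose a resolution of $N$ suited to the derived tensor product. Since $\cR$ is noetherian and $\Lambda,\Gamma$ are module finite over $\cR$, the enveloping algebra $\Lambda \otimes_{\cR} \Gamma^\op$ is again a noetherian module finite $\cR$-algebra. Hence $N$ has a resolution $P \to N$ by finitely generated projective $\Lambda \otimes_{\cR} \Gamma^\op$-modules, bounded above. Each $P^i$ is a summand of a finite free bimodule $(\Lambda \otimes_\cR \Gamma^\op)^{n}$. It is therefore projective, hence flat, as a left $\Gamma$-module, because $\Lambda$ is projective over $\cR$. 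If $\Lambda$ is not $\cR$-projective, we instead take a resolution of $N$ that is K-flat over $\Gamma^\op$, e.g.\ the bar resolution; this is the flexibility allowed in \ref{derived tensor def}. A bounded-above complex of flat $\Gamma^\op$-modules is K-flat. So by \ref{derived tensor def}, $M \Dtensor_\Gamma N \cong \tot(M \otimes_\Gamma P)$.

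Next we localise the resolution. Localisation $(-)_\idlm = - \otimes_\cR \cR_\idlm$ is exact, so $P_\idlm \to N_\idlm$ is again a quasi-isomorphism of $\Lambda_\idlm$-$\Gamma_\idlm$-bimodules. Each $P^i_\idlm$ is flat over $\Gamma_\idlm^\op$, since $\Gamma_\idlm \otimes_\Gamma -$ of a flat $\Gamma^\op$-module is flat over $\Gamma_\idlm^\op$. Thus $M_\idlm \Dtensor_{\Gamma_\idlm} N_\idlm \cong \tot(M_\idlm \otimes_{\Gamma_\idlm} P_\idlm)$.

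It remains to compare the two complexes degreewise and then pass to cohomology. The key identity is the natural bimodule isomorphism
\[ (X \otimes_\Gamma Y)_\idlm \cong X_\idlm \otimes_{\Gamma_\idlm} Y_\idlm. \]
It follows from $X_\idlm \otimes_{\Gamma_\idlm} Y_\idlm \cong X \otimes_\Gamma \Gamma_\idlm \otimes_{\Gamma_\idlm} \Gamma_\idlm \otimes_\Gamma Y$ together with $\Gamma_\idlm \otimes_{\Gamma} Y \cong \cR_\idlm \otimes_\cR Y$. The latter holds because $\cR$ is central, so the $\cR$-actions on both sides agree. These isomorphisms commute with the differentials of the total complex and with finite direct sums; in each degree only finitely many terms are nonzero, given bounded-above $P$ and bounded $M$. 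Hence they assemble into an isomorphism of complexes
\[ \tot(M \otimes_\Gamma P)_\idlm \cong \tot(M_\idlm \otimes_{\Gamma_\idlm} P_\idlm). \]
Finally, since localisation is exact, applying $(-)_\idlm$ to the complex computes $(M \Dtensor_\Gamma N)_\idlm$ in the derived category. This gives the claimed isomorphism in $\Dcat(\Mod \Lambda_\idlm \otimes_\cR \Lambda_\idlm^\op)$.

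The main obstacle is the first step: making sure the chosen resolution is K-flat on the correct side after localisation, while still carrying the full bimodule structure. I would first try resolving by finitely generated projective bimodules and reducing to flatness of $\Lambda \otimes_\cR \Gamma^\op$ over $\Gamma^\op$. If that fails for non-flat $\Lambda$, the fallback is the bar resolution $\Gamma^{\otimes_\cR \bullet}$-type construction relative to $\cR$, or any K-flat-over-$\Gamma^\op$ resolution, whose flatness is visibly preserved by $-\otimes_\cR \cR_\idlm$.
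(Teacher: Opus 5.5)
Your proposal is correct and is essentially the paper's proof with the roles of the two factors swapped: the paper resolves $M$ rather than $N$ by projectives, identifies $(Q^{(i)}\otimes_\Gamma N)_\idlm \cong Q^{(i)}_\idlm \otimes_{\Gamma_\idlm} N_\idlm$ degreewise compatibly with the differentials, and uses exactness of localisation to see that $Q_\idlm \to M_\idlm$ is again a resolution, exactly as you do for $P \to N$. The paper also takes the one-sided flatness issue you flag for granted, but neither of your concrete justifications holds in general: $\Lambda$ need not be $\cR$-projective (e.g.\ $\Lambda = B$ is finite dimensional in the application), and the $\cR$-relative bar resolution has terms $\Gamma \otimes_\cR X$ that need not be $\Gamma$-flat. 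So you should instead just invoke a resolution as in \ref{derived tensor def}, and observe that K-flatness survives the base change $\Gamma_\idlm \otimes_\Gamma -$.
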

\begin{proof}
    This proof is similar to \cite[3.2.10]{wei}. Consider a projective resolution $q \colon Q \to M$ in $\Dcat(\Mod \Gamma_{\idlm} \otimes_{\cR} \Gamma_{\idlm}^\op)$, where $Q$ is a chain complex which at degree $i$ has differential $d^{(i)} \colon Q^{(i)} \to Q^{(i-1)}$. Then, $(M \Dtensor_{\Gamma} N)_{\idlm}$ is quasi-isomorphic to the complex $L$ given by
    \[  L \colon \hdots \to (Q^{(i)} \otimes_{\Gamma} N)_{\idlm} \tow{(d^{(i)} \otimes_{\Gamma} N)_{\idlm}} (Q^{(i-1)} \otimes_{\Gamma} N)_{\idlm} \tow{(d^{(i-1)}\otimes_{\Gamma} N)_{\idlm}} \to \hdots \to (Q^{(0)} \otimes_{\Gamma} N)_{\idlm} \to 0 \to \hdots \]
    Using the definition of localisation, it is easy to check that, for each $i$, there is an $\Lambda_{\idlm}$-bimodule isomorphism 
    \[ \phi^{(i)} \colon (Q^{(i)} \otimes_{\Gamma} N)_{\idlm} = Q^{(i)} \otimes_{\Gamma} N \otimes_{\cR} \cR_{\idlm} \tow{\sim} Q^{(i)}_{\idlm} \otimes_{\Gamma_{\idlm}} N_{\idlm} \]
    sending $x \otimes_{\Gamma} m \otimes_{\cR} \frac{1}{s}$ to $\frac{x}{s} \otimes_{\Gamma_{\idlm}} \frac{m}{1}$. Moreover, $(d^{(i)} \otimes_{\Gamma} N)_{\idlm} \cdot \phi^{(i+1)} = \phi^{(i)} \cdot d^{(i+1)}_{\idlm} \otimes_{\Gamma_{\idlm}} N_{\idlm}$. Thus, $L$ is isomorphic to the complex $L'$
     \[  L' \colon \hdots \to Q^{(i)}_{\idlm} \otimes_{\Gamma_{\idlm}} N_{\idlm} \tow{d^{(i)}_{\idlm} \otimes_{\Gamma_{\idlm}} N_{\idlm}} Q^{(i-1)}_{\idlm} \otimes_{\Gamma_{\idlm}} N_{\idlm} \tow{d^{(i-1)}_{\idlm} \otimes_{\Gamma_{\idlm}} N_{\idlm}} \to \hdots \to Q^{(0)}_{\idlm} \otimes_{\Gamma_{\idlm}} N_{\idlm} \to 0 \to \hdots \]

    Next, observe that since localisation is exact, the complex $Q_{\idlm}$ given by
    \[ \hdots \to Q^{(i)}_{\idlm} \tow{d^{(i)}_{\idlm}} Q^{(i-1)}_{\idlm} \tow{d^{(i-1)}_{\idlm}} \to \hdots \]
    induces a projective resolution $q_{\idlm} \colon Q_{\idlm} \to M_{\idlm}$. Thus,  $M_{\idlm} \Dtensor_{\Gamma_{\idlm}} N_{\idlm}$ is quasi-isomorphic to $L'$, and, therefore, the statement follows.
\end{proof}

\begin{lemma} \label{C of local is local of C}
    With notation as in \ref{beta not}, there is a commutative diagram 
    \[ \begin{tikzcd}
        (B \Dtensor_A B)_{\idlm} \arrow[d, "\sim"{anchor=north, rotate=90}, "\phi"'] \arrow[r, "{(s \cdot \beta)_{\idlm}}"] & B_{\idlm} \arrow[d, equals] \\
        B_{\idlm} \Dtensor_{A_{\idlm}} B_{\idlm} \arrow[r, "s' \cdot \beta'"] & B_{\idlm}.
    \end{tikzcd}
    \]
    Thus, $ C(s' \cdot \beta') \cong  C(s \cdot \beta)_{\idlm}$.
\end{lemma}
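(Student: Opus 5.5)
The plan is to build the vertical isomorphism $\phi$ explicitly at chain level, by localising a well-chosen bimodule resolution, and then check the square commutes degree by degree. Fix a projective resolution $q \colon Q \to {}_B B {}_A$ in $\Dcat(A \otimes_\Z B^\op)$ as in the discussion before \eqref{beta remark}. This means $Q$ is a bounded-above complex of projective bimodules, and $q$ is concentrated in degree zero, so $q^0 \colon Q^0 \to B$ is surjective. Then $B \Dtensor_A B$ is computed by $\tot(Q \otimes_A B)$. By \ref{localisation commutes with tensor}, in the form proved there, localising gives an isomorphism of complexes
\[ \phi \colon \tot(Q \otimes_A B)_{\idlm} \to \tot(Q_{\idlm} \otimes_{A_{\idlm}} B_{\idlm}), \]
defined termwise by $x \otimes b \otimes \tfrac{1}{s} \mapsto \tfrac{x}{s} \otimes \tfrac{b}{1}$. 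Since localisation is exact and sends projective bimodules to projective $A_{\idlm}$-$B_{\idlm}$-bimodules (summands of localised free modules), the complex $Q_{\idlm}$ with $q_{\idlm}$ is a legitimate resolution of $B_{\idlm}$ of the same shape. It therefore computes $B_{\idlm} \Dtensor_{A_{\idlm}} B_{\idlm}$, and it is the resolution I will use to define $s' \cdot \beta'$.

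Next I would check commutativity. Both $(s\cdot\beta)_{\idlm}$ and $s' \cdot \beta'$ vanish outside degree zero, so only degree zero matters. There $(s\cdot\beta)_{\idlm}$ sends $x \otimes b \otimes \tfrac1s$ to $\tfrac{q^0(x) b}{s}$, via $(m \cdot (q^0 \otimes_A 1))_{\idlm}$. On the other side, $\beta'^0 \cdot \phi^0$ sends it to $q^0_{\idlm}(\tfrac{x}{s}) \cdot \tfrac{b}{1} = \tfrac{q^0(x)b}{s}$. The two agree, so the square commutes on the nose as chain maps, hence in $\Dcat(\Mod B_{\idlm} \otimes B_{\idlm}^\op)$. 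The one point to verify is that $s'$ may be taken relative to $Q_{\idlm}$: the morphism $s'\cdot\beta'$ in the derived category is independent of the chosen resolution, since any two are related by a homotopy equivalence compatible with the augmentations.

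Finally, localisation is an exact functor on bimodule complexes and commutes with mapping cones, so $C(s\cdot\beta)_{\idlm} \cong \cone((s\cdot\beta)_{\idlm})$. Because the square commutes and $\phi$ is an isomorphism while the right-hand vertical map is the identity, the two cones are isomorphic, giving $C(s'\cdot\beta') \cong C(s\cdot\beta)_{\idlm}$.

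I expect the main obstacle to be justifying the well-definedness issue above: showing that $\phi$ respects the $B_{\idlm}$-bimodule structures and that the choice of resolution does not affect $s'\cdot\beta'$ up to the identifications used. The underlying arithmetic is trivial; this compatibility is the part that needs care.
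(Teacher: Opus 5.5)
Your proposal is correct and follows the paper's proof. The paper also takes $\phi$ from \ref{localisation commutes with tensor} (built from the localised resolution $Q_{\idlm}$), verifies commutativity of the square, and deduces $C(s' \cdot \beta') \cong C(s \cdot \beta)_{\idlm}$ from exactness of localisation; your degree-zero computation and your remark that $s' \cdot \beta'$ does not depend on the chosen resolution make explicit the ``straightforward diagram chase'' that the paper leaves unstated.
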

\begin{proof}
    We let $\phi \colon (B \Dtensor_A B)_{\idlm} \to B_{\idlm} \Dtensor_{A_{\idlm}} B_{\idlm}$ be the isomorphism of \ref{localisation commutes with tensor}. It is then straightforward to check by diagram chasing that the diagram in the statement commutes.

    Therefore, 
    \[ C(s' \cdot \beta') = \cone(s' \cdot \beta') \cong \cone(s \cdot \beta)_{\idlm}. \]
    Moreover,  the $B$-bimodule $C(s \cdot \beta)$ is defined by the triangle
    \[  B \Dtensor_A B \tow{s \cdot \beta} B \to C(s \cdot \beta) \to^+ \]
    which, by exactness of localisation, induces the triangle of $B_{\idlm}$-bimodules
    \[ (B \Dtensor_A B )_{\idlm} \tow{(s \cdot \beta)_{\idlm}} B_{\idlm} \to C(s \cdot \beta)_{\idlm} \to^+. \]
    Hence, $\cone(s \cdot \beta)_{\idlm} \cong  C(s \cdot \beta)_{\idlm}$, and the statement follows.
\end{proof}

\subsubsection{Derived autoequivalence of $R\#G$}

Under \cref{quot setup}, $R\#G$ has finite global dimension (Here, since $R\#G$ is noetherian, there is no need distinguish between right and left global dimension). Hence, the algebra $B$ is perfect as a right and left $R\#G$-module. Thus, the restriction of scalars functor $F = - \otimes_B B \colon \Dcat(\Mod B) \to \Dcat(\Mod R\#G)$, as well as its right and left adjoints, preserve bounded complexes of finitely generated modules. Thus, the twist and cotwist around $F \colon \Db(\modu B) \to \Db(\modu R\#G)$ can be readily specified as a corollary of \ref{twist for epi} and \ref{cotwist gen}.

\begin{cor} \label{tw ctw RG}
    Under \cref{quot setup}, the twist and cotwist around $F \colon \Db(\modu B) \to \Db(\modu R\#G)$ are
\begin{align*}
    & \cT = \Rderived{\Hom_{R\#G}}(\ker p, -) \colon \Db(\modu R\#G) \to \Db(\modu R\#G), \\
    & \cC = \Rderived{\Hom_{B}}(C(s \cdot \beta), -) \colon \Db(\modu B) \to \Db(\modu B).
\end{align*}
\end{cor}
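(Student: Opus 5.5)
This should be a direct specialisation of \ref{twist for epi} and \ref{cotwist gen} to the surjection $p \colon R\#G \to B$. The only real work is checking that the standing hypothesis of \S\ref{twist res scalars} holds, and that all the functors involved restrict to bounded derived categories of finitely generated modules.

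First, I will verify that $B$ is perfect over $R\#G$ on both sides. By \ref{RG satisfies local setup}\eqref{finite gdim local}, or directly by \cite[5.6]{LW}, $R\#G$ is noetherian of global dimension $3$. Since $B$ is a quotient of $R\#G$, it is finitely generated as a right and as a left $R\#G$-module. A finitely generated module over a noetherian ring of finite global dimension has a finite resolution by finitely generated projectives, so $B \in \Kb(\proj R\#G)$ and $B \in \Kb(\proj (R\#G)^\op)$. This places us in the setting of \eqref{res-ext diagram}, so the adjoint triple $(F^\LA, F, F^\RA)$ exists on unbounded derived categories.

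Next, I will check that everything restricts to $\Db(\modu -)$. The functor $F$ is exact restriction of scalars and clearly preserves bounded complexes of finitely generated modules. The functor $F^\LA = - \Dtensor_{R\#G} B$ preserves $\Db(\modu)$ because $B$ has a finite resolution by finitely generated projective left modules. The functor $F^\RA = \Rderived{\Hom_{R\#G}}(B, -)$ preserves $\Db(\modu)$ because $B$ has a finite resolution by finitely generated projective right modules. Since $\ker p$ is a finitely generated ideal of the noetherian ring $R\#G$, it is also perfect, so $\Rderived{\Hom_{R\#G}}(\ker p,-)$ preserves $\Db(\modu R\#G)$.

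Then \ref{twist for epi}, applied to the surjection $p$, gives $\cT = \Rderived{\Hom_{R\#G}}(\ker p, -)$. Likewise \ref{cotwist gen} gives $\cC = \Rderived{\Hom_B}(C(s\cdot\beta), -)$ on $\Dcat(B)$. Both are defined by functorial triangles built from the unit and counit, as in \ref{funct cones}, so restricting these triangles to the bounded subcategories gives the twist and cotwist of the restricted $F$.

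The main obstacle is showing that $\cC$ preserves $\Db(\modu B)$, i.e.\ that $C(s\cdot\beta)$ is suitably bounded. The cone of $B \Dtensor_{R\#G} B \to B$ is a bounded complex of finitely generated bimodules, since $B \Dtensor_{R\#G} B$ has cohomology only in degrees $-3,\dots,0$. When $B$ has finite global dimension, $\Rderived{\Hom_B}$ out of it therefore preserves $\Db(\modu B)$. In the self-injective case I would argue locally instead. By \ref{C of local is local of C} and \ref{ctw res cohom in 2 degrees}, applied at each $\idlm$ via \ref{RG satisfies local setup}, $C(s\cdot\beta)_\idlm$ is concentrated in a single degree. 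Since $B$ is finite dimensional, its support is a finite set of maximal ideals, so $C(s\cdot\beta)$ is itself concentrated in one degree and is a finitely generated bimodule. This is the analogue of \ref{C is bounded}, and it completes the argument.
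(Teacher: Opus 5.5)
Your main line is the paper's own. Finite global dimension of $R\#G$ makes $B$ perfect on both sides, so $F$, $F^{\LA}$ and $F^{\RA}$ preserve bounded complexes of finitely generated modules, and \ref{twist for epi} and \ref{cotwist gen} then give $\cT$ and $\cC$ directly. That part is correct and complete.

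Your final paragraph, however, treats a non-issue as the main obstacle, and its self-injective case does not prove what you claim. Since $\cC$ sits in the functorial triangle $\cC \to 1 \to F^{\RA}F \to^+$, and $\Db(\modu B)$ is a thick subcategory of $\Dcat(\Mod B)$, $\cC$ automatically preserves $\Db(\modu B)$ once $F$ and $F^{\RA}$ do. This is already contained in your preceding sentence about restricting the triangles.

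Knowing only that $C(s\cdot\beta)$ is a finitely generated bimodule concentrated in one degree would not suffice when $B$ is self-injective. Over such $B$, a module of finite projective dimension is projective. So boundedness of $\Rderived{\Hom_B}(C(s\cdot\beta), -)$ on $\Db(\modu B)$ would require $\Tor_3^{R\#G}(B,B)$ to be projective as a $B$-module. A simple module of infinite projective dimension shows the one-degree condition alone fails. The paper does establish that projectivity, via $\Tor_3^{(R\#G)_\idlm}(B_\idlm,B_\idlm) \cong \Hom_{B_\idlm}(DB_\idlm, B_\idlm)$, but only later, in the proof of \ref{spherical RG}, where biperfectness is actually needed. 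You should replace that paragraph with the one-line triangle argument.
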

\begin{proof}
    This is an immediate consequence of \ref{twist for epi}, \ref{cotwist gen}, and the fact that $F$ (as well as its right and left adjoints) preserve bounded complexes of finitely generated modules. 
\end{proof}

In what follows, we will show that $\cT$ and $\cC$ must be equivalences. For this, we will need two key propositions.

\begin{prop} \label{local tilting implies global}
    Consider a field $\mathbb{K}$, and let $\cR$ be a commutative noetherian $\mathbb{K}$-algebra. Suppose that $\Gamma$ is a module finite $\cR$-algebra, and let $M$ be a biperfect complex of finitely generated $\Gamma$-bimodules. If $M_\idlm$ is tilting for all $\idlm \in \maxspec \cR$, then $M$ is tilting.
\end{prop}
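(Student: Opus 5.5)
We have to pin down what "tilting" means for a biperfect bimodule complex $M$. We take it in Rickard's sense: $M$ is tilting if there is a biperfect complex $N$ with $M \Dtensor_\Gamma N \cong \Gamma \cong N \Dtensor_\Gamma M$ as bimodule complexes. Equivalently, $M$ is tilting if the evaluation maps
\[ \Gamma \to \Rderived{\Hom_\Gamma}(M,M) \quad \text{(for both the left and right structures)} \]
are isomorphisms and $M$ generates $\Kb(\proj \Gamma)$ on each side. In that case the inverse is $N = \Rderived{\Hom_\Gamma}(M, \Gamma)$.

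The plan is to construct the candidate inverse $N := \Rderived{\Hom_\Gamma}(M,\Gamma)$ globally. Because $M$ is biperfect, $N$ is again a biperfect complex of finitely generated bimodules. We then check that the canonical maps
\[ \mathrm{ev} \colon M \Dtensor_\Gamma N \to \Gamma \qquad\text{and}\qquad \Gamma \to N \Dtensor_\Gamma M \cong \Rderived{\Hom_\Gamma}(M,M) \]
are isomorphisms in $\Dcat(\Gamma \otimes_{\cR} \Gamma^\op)$. We use the canonical maps rather than arbitrary local isomorphisms, since isomorphisms chosen separately at each $\idlm$ need not glue.

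The key steps are as follows.

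\begin{enumerate}
\item Check compatibility with localisation. By \ref{localisation commutes with tensor} we have $(M \Dtensor_\Gamma N)_\idlm \cong M_\idlm \Dtensor_{\Gamma_\idlm} N_\idlm$. Since $M$ is perfect with finitely generated terms over a noetherian ring, $\Rderived{\Hom}$ commutes with localisation, giving $N_\idlm \cong \Rderived{\Hom_{\Gamma_\idlm}}(M_\idlm, \Gamma_\idlm)$. This is the same flat base change used in \cite[2.15]{DW1}, applied termwise to a bounded complex of finitely generated projectives. Under these identifications, the localisation of each canonical map is the corresponding canonical map for $M_\idlm$.

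\item Use the local hypothesis. Since $M_\idlm$ is tilting, its inverse is necessarily $\Rderived{\Hom_{\Gamma_\idlm}}(M_\idlm,\Gamma_\idlm)$, by the uniqueness of inverse bimodule complexes argued in \ref{inverse bimod}. Moreover, for a tilting complex the canonical evaluation and coevaluation maps are isomorphisms \cite[4.1, 4.2]{Ric}. Hence $(\mathrm{ev})_\idlm$ and the localised coevaluation are isomorphisms for every $\idlm$.

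\item Conclude globally. Let $Z$ be the cone of either canonical map. Its cohomology modules are finitely generated $\cR$-modules, since all terms are finitely generated over the module finite algebra $\Gamma$. By exactness of localisation, $\cohom{k}(Z)_\idlm = \cohom{k}(Z_\idlm) = 0$ for all maximal ideals $\idlm$. A module vanishing at all maximal ideals is zero, so $Z$ is acyclic and both canonical maps are isomorphisms. Therefore $M$ is tilting with inverse $N$.
\end{enumerate}

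The main obstacle is step 2, specifically identifying the abstract local tilting property with the specific canonical maps being isomorphisms. The plan is to invoke Rickard's theorem that a two-sided tilting complex $X$ satisfies $X \Dtensor X^{\vee} \cong \Gamma$ via evaluation.

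If the naturality bookkeeping there proves delicate, the fallback is to verify one-sided conditions locally. These are $\Hom_{\Dcat}(M, M[i]) = 0$ for $i \neq 0$, $\Gamma \cong \End(M)$ via the left action, and generation. The first two are cohomology of $\Rderived{\Hom_\Gamma}(M,M)$, so they localise and are detected by the same support argument as in step 3.

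The generation condition then follows from $M \Dtensor_\Gamma N \cong \Gamma$, since this isomorphism exhibits $\Gamma$ as built from $M$ tensored with a perfect complex.
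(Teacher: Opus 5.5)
Your proof is correct, but it takes a different route from the paper. The paper never writes down an inverse. It cites Miyachi's criterion \cite[1.8]{mi}: a biperfect bimodule complex is tilting once the left and right multiplication maps $\Gamma \to \Rderived{\Hom_\Gamma}(M,M)$ and $\Gamma \to \Rderived{\Hom_{\Gamma^\op}}(M,M)$ are isomorphisms. It then verifies these maps by localising: nonzero-degree cohomology vanishes at every $\idlm$, and $\cohom{0}$ of each map localises to the corresponding local isomorphism.

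You instead build $N=\Rderived{\Hom_\Gamma}(M,\Gamma)$ and show, by the same support argument, that evaluation and coevaluation are isomorphisms. The conclusion then comes straight from Rickard's definition of a two-sided tilting complex. Your coevaluation is one of the paper's multiplication maps. Your evaluation $N \Dtensor_\Gamma M \to \Gamma$ (or $M \Dtensor_\Gamma N \to \Gamma$ in your left-module ordering) takes the place of the other.

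What you pay is the extra check that $\Rderived{\Hom_\Gamma}(M,\Gamma)$ and the evaluation map commute with localisation. What you gain is:
\begin{itemize}
\item independence from Miyachi's theorem;
\item an explicit inverse;
\item using perfectness of $M$ only on the side over which $N$ is formed, whereas Miyachi's criterion needs biperfectness as an input.
\end{itemize}

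The step you flag as the main obstacle is settled as follows. Since $-\Dtensor_{\Gamma_\idlm} M_\idlm$ is an equivalence, the unit and counit of its adjunction with $\Rderived{\Hom_{\Gamma_\idlm}}(M_\idlm,-)$ are isomorphisms. Evaluated at $\Gamma_\idlm$, they are exactly your two canonical maps. These are bimodule maps, so being quasi-isomorphisms on underlying complexes suffices.

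One claim should be removed: $N$ is not biperfect merely because $M$ is. It is perfect on the side coming from $\Gamma$, but not necessarily on the other. For example, take $M=\Gamma e\otimes_{\mathbb{K}} e'\Gamma$ with $\Gamma$ finite dimensional. On that side $N$ is a sum of copies of the injective module $D(\Gamma e)$, which need not have finite projective dimension. You never use this claim, and once both canonical maps are isomorphisms, $N$ is a two-sided inverse of $M$ and hence automatically biperfect. Similarly, the fallback's generation argument presupposes the main step, so it adds nothing.
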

\begin{proof}
   We will show that $M$ is tilting because it satisfies (a)-(c) of \cite[1.8]{mi}. Namely, 
    \begin{enumerate}[label=(\alph*)]
        \item The bimodule complex $M$ is biperfect. 
        \item The left multiplication map $\lambda \colon \Gamma \to \Rderived{\Hom_{\Gamma}}(M, M)$ is an isomorphism in $\Dcat(\Mod \Gamma \otimes_k \Gamma^\op)$
        \item Right multiplication $\rho \colon \Gamma \to \Rderived{\Hom_{\Gamma^\op}}(M, M)$is an isomorphism in $\Dcat(\Mod \Gamma \otimes_k \Gamma^\op)$.
    \end{enumerate}

   Observe that (a) is true by assumption. To prove (b), we aim to show that the natural left multiplication map $\lambda \colon \Gamma \to \Rderived{\Hom_\Gamma}(M, M)$ is a bimodule isomorphism in the $\Dcat(\Mod \Gamma \otimes_k \Gamma^\op)$. First, since $M_\idlm$ is tilting for all ideals $\idlm \in \maxspec R$, then the natural left multiplication map $\Gamma_\idlm \to \Rderived{\Hom_{\Gamma_\idlm}}(M_\idlm, M_\idlm)$ is a quasi-isomorphism. That is, $\Rderived{\Hom_{\Gamma_\idlm}}(M_\idlm, M_\idlm)$ is concentrated in degree zero. By  \cite[2.15]{DW1}, and because localisation is exact, for $i \neq 0$,
   \[0 = \cohom{i} \left( \Rderived{\Hom_{\Gamma_\idlm}}(M_\idlm, M_\idlm) \right) \cong \cohom{i} \left( \Rderived{\Hom_{\Gamma}}(M, M) \right) \otimes_{\cR} \cR_{\idlm}. \]
   Since the module $\cohom{i} \left( \Rderived{\Hom_{\Gamma}}(M, M) \right) \otimes_\cR \cR_\idlm$ vanishes for all maximal ideals $\idlm \in \maxspec R$, necessarily $\cohom{i} \left( \Rderived{\Hom_{\Gamma}}(M, M) \right) = 0$ for $i \neq 0$. Therefore, it suffices to show that 
   \[ \cohom{0}(\lambda) \colon \Gamma \to \cohom{0} \left( \Rderived{\Hom_{\Gamma}}(M, M) \right) \cong \Hom_{\Dcat(\Gamma)}(M, M) \]
   is an isomorphism. 
   
   Well, since $M_\idlm$ is tilting, the natural multiplication map 
   \[ \gamma_\idlm \colon \Gamma_\idlm \to  \cohom{0} \left( \Rderived{\Hom_{\Gamma_\idlm}}(M_\idlm, M_\idlm) \right) \cong \Hom_{\Dcat(\Gamma_\idlm)}(M_\idlm, M_\idlm)\]
   is an isomorphism. Moreover, it straightforward to check that, under the identification, 
   \[ \Hom_{\Dcat(\Gamma)}(M, M) \otimes \cR_{\idlm} \cong \cohom{0} \left( \Rderived{\Hom_{\Gamma_\idlm}}(M_\idlm, M_\idlm) \right) \cong \Hom_{\Dcat(\Gamma_\idlm)}(M_\idlm, M_\idlm)\]
   then $\cohom{0}(\lambda) \otimes_\cR \cR_\idlm = \gamma_\idlm$. Hence, $\cohom{0}(\lambda) \otimes_\cR \cR_\idlm$ is an isomorphism for all $\idlm \in \maxspec \cR$. Thus, $\cohom{0}(\lambda)$ must be an isomorphism, as required. 
   
   The dual argument works to show that (c) holds. That is, that the natural left multiplication map is an isomorphism. 
\end{proof}

\begin{prop} \label{c local implies z local pre RG}
    Consider a field $\mathbb{K}$, and let $\cR$ be a commutative noetherian $\mathbb{K}$-algebra. Suppose that $\Gamma$ is a module finite $\cR$-algebra, and let $M$ be a finitely generated $\Gamma$-bimodule.  If there is an $n$ such that for all $\idlm \in \maxspec \cR$, $\pdim_{\Gamma_\idlm}(M_\idlm) \leq n$, $\pdim_{\Gamma_\idlm^\op}(M_\idlm) \leq n$, and $M_\idlm$ is tilting, then $M$ is tilting.
\end{prop}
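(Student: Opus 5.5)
The plan is to reduce the statement to \ref{local tilting implies global}. Since $M_\idlm$ is tilting for every $\idlm$ by hypothesis, it only remains to show that $M$ is a biperfect complex. That is, we need $M \in \Kb(\proj \Gamma)$ and $M \in \Kb(\proj \Gamma^\op)$. By symmetry it suffices to treat the right module structure, since the left structure is handled by the same argument applied to $\Gamma^\op$. Because $\Gamma$ is module finite over the noetherian ring $\cR$, it is noetherian, so $M$ has a resolution $P \to M$ by finitely generated projective $\Gamma$-modules. It therefore suffices to prove $\pdim_\Gamma M \leq n$.

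To prove this bound, truncate the resolution at stage $n$. Let $K := \ker(P^{(n-1)} \to P^{(n-2)})$ be the $n$-th syzygy, which is finitely generated because $\Gamma$ is noetherian. Since localisation is exact, $P_\idlm \to M_\idlm$ is a resolution by finitely generated projective $\Gamma_\idlm$-modules, and $K_\idlm$ is its $n$-th syzygy. The hypothesis $\pdim_{\Gamma_\idlm}(M_\idlm) \leq n$ then forces $K_\idlm$ to be projective over $\Gamma_\idlm$, for every $\idlm \in \maxspec \cR$.

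Next we show that $K$ itself is projective. Since $K$ is finitely presented, \cite[2.15]{DW1} gives
\[ \Ext^1_\Gamma(K, N) \otimes_\cR \cR_\idlm \cong \Ext^1_{\Gamma_\idlm}(K_\idlm, N_\idlm) = 0 \]
for every finitely generated $N$ and every $\idlm$. Since $\Ext^1_\Gamma(K,N)$ is a finitely generated $\cR$-module that vanishes at all maximal ideals, $\Ext^1_\Gamma(K, N) = 0$. Taking $N$ to be the first syzygy of $K$ splits a surjection from a finitely generated free module onto $K$, so $K$ is projective. Hence $0 \to K \to P^{(n-1)} \to \cdots \to P^{(0)} \to M \to 0$ is a finite resolution by finitely generated projectives, and $M \in \Kb(\proj \Gamma)$.

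The dual argument gives $M \in \Kb(\proj \Gamma^\op)$, so $M$ is biperfect, and \ref{local tilting implies global} finishes the proof. The main subtlety is the uniformity of $n$. Mere pointwise finiteness of $\pdim_{\Gamma_\idlm} M_\idlm$ would not let us truncate a single global resolution at a fixed stage, which is why the hypothesis is stated with a common bound $n$. One should also check that \cite[2.15]{DW1} applies to $\Gamma$-modules over a module finite algebra. It does, because $\Ext$ of finitely generated modules over a noetherian algebra commutes with flat base change $\cR \to \cR_\idlm$.
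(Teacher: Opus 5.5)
Your proposal is correct and follows the paper's proof. You reduce to \ref{local tilting implies global}, then use \cite[2.15]{DW1} to pass from the local bounds $\pdim_{\Gamma_\idlm}(M_\idlm)\leq n$ to $\pdim_\Gamma M\leq n$, and dually over $\Gamma^\op$. Your syzygy-splitting step only spells out the passage from $\Ext^{i}_\Gamma(M,N)=0$ for $i>n$ to the projective dimension bound, which the paper leaves implicit.

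One small correction to your closing remark: the common bound $n$ is a convenience, not a necessity. For a single finitely generated module, the locus where its $k$-th syzygy is projective is open. So pointwise finiteness at all maximal ideals, together with quasi-compactness of $\spec \cR$, already gives a uniform bound.
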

\begin{proof}
    In view of \ref{local tilting implies global}, it suffices to show that $M$ is perfect as a right and a left $\Gamma$-module. Indeed, consider a finitely generated $\Gamma$-module $N$. Then, for any $\idlm \in \maxspec \cR$, it follows from \cite[2.15]{DW1} that
   \[ \Ext^i_{\Gamma}(M, N) \otimes_{\cR} \cR_{\idlm} = \Ext^i_{\Gamma_\idlm}(M_\idlm, N_\idlm). \]
   Therefore, since $\pdim_{\Gamma_\idlm}(M_\idlm) \leq n$, necessarily  $\Ext^i_{\Gamma}(M, N) \otimes_{\cR} \cR_{\idlm} = 0$ for all $i > n$ and maximal ideals $\idlm \in \maxspec \cR$. Whence, $\Ext^i_{\Gamma}(M, N) = 0$ for all $i > n$, so that $\pdim_\Gamma M \leq n$, as required. A similar same argument works to show that $\pdim_{\Gamma^\op} M \leq n$.
\end{proof}

In light of these propositions, we are able to prove that the twist and cotwist in \ref{tw ctw RG} are equivalences, and so the functor $F$ is spherical 

\begin{theorem} \label{spherical RG}
    Under \cref{quot setup}, the functor
    \[ F = - \Dtensor_B B \colon \Db(\modu B) \to \Db(\modu S\#G) \]
    is spherical. 
\end{theorem}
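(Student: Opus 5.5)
We prove that the twist $\cT$ and the cotwist $\cC$ of \ref{tw ctw RG} are both equivalences. By the $2$ out of $4$ lemma \ref{spherical criteria}, $F$ is then spherical. Each functor is given by $\Rderived{\Hom}$ out of a bimodule complex. It therefore suffices to show that the relevant bimodules are tilting in the sense of \cite[1.8]{mi}, since $\Rderived{\Hom}$ out of a tilting bimodule is an equivalence. We deduce this from the local statement \ref{local tilting RG}, using the local-to-global criteria \ref{local tilting implies global} and \ref{c local implies z local pre RG} over $\cR = S = R^G$.

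\textbf{Twist.} Take $\Gamma = R\#G$ and $M = \ker p$, a finitely generated $\Gamma$-bimodule. First we show that $M$ is biperfect. Since $R\#G$ has global dimension $3$, $\ker p$ has projective dimension at most $3$ on each side. Moreover, each localisation $(R\#G)_\idlm$ also has global dimension $3$ by \ref{RG satisfies local setup}\eqref{finite gdim local}. This gives the uniform bound $n = 3$ required in \ref{c local implies z local pre RG}.

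Next we check the local condition. Localisation is exact, so $(\ker p)_\idlm = \ker p_\idlm$. By \ref{local tilting RG}, $\cT_\idlm = \Rderived{\Hom}_{(R\#G)_\idlm}(\ker p_\idlm, -)$ is an equivalence. We want to conclude that $\ker p_\idlm$ is tilting. This follows from Rickard's theory \cite{Ric}: a bimodule complex whose $\Rderived{\Hom}$ functor is an equivalence is two-sided tilting. Alternatively, one can check (b) and (c) of \cite[1.8]{mi} directly, since these are exactly full faithfulness on the regular bimodule. Applying \ref{c local implies z local pre RG} then shows that $\ker p$ is tilting, so $\cT$ is an equivalence.

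\textbf{Cotwist.} Take $\cR = S$, $\Gamma = B$ and $M = C(s \cdot \beta)$. This is a complex rather than a module, so we use \ref{local tilting implies global} instead. Its hypotheses require that $C(s\cdot\beta)$ be biperfect over $B$ with finitely generated cohomology. Boundedness is \ref{C is bounded}. Finite generation follows because $B$ is finite dimensional and $B \Dtensor_A B$ has finite dimensional cohomology.

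For perfectness there are two cases. If $B$ has finite global dimension, boundedness already gives perfectness. If $B$ is self-injective, we need to know that $C(s\cdot\beta)$ is perfect. We would handle this through the local pieces. By \ref{C of local is local of C}, $C(s\cdot\beta)_\idlm \cong C(s'\cdot\beta')$. By \ref{inverse bimod}, applied locally via \ref{RG satisfies local setup} and \ref{tor vanishing}, this is a shift of the inverse of $DB_\idlm$, and hence invertible, hence tilting.

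Since $B$ is finite dimensional, it is supported at finitely many maximal ideals. Therefore $B \cong \prod_\idlm B_\idlm$, and $C(s\cdot\beta)$ decomposes as the corresponding product of the $C(s\cdot\beta)_\idlm$. Perfectness and the tilting property can then be checked factorwise, so we may invoke \ref{local tilting implies global} with only finitely many $\idlm$ relevant.

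\textbf{Main obstacle.} The main difficulty is exactly this last point. We must ensure biperfectness of $C(s\cdot\beta)$ when $B$ is self-injective of infinite global dimension. We must also justify that the local equivalences $\cT_\idlm$ and $\cC_\idlm$ correspond to tilting bimodules with the \emph{natural} multiplication maps, since that is what \cite[1.8]{mi} requires. We would first try the finite-support decomposition of $B$ described above, which reduces the question to finitely many local Gorenstein orders. Once $\cT$ and $\cC$ are both shown to be equivalences, \ref{spherical criteria} finishes the proof.
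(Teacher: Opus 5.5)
Your proposal is correct, and the finite-support decomposition you flag as the main obstacle does go through. The twist half is the paper's argument: $(\ker p)_\idlm$ is tilting by \ref{local tilting RG}, projective dimensions are uniformly bounded by $3$ by \ref{RG satisfies local setup}, and \ref{c local implies z local pre RG} globalises.

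For the cotwist with $B$ self-injective you take a genuinely different route. The paper stays inside the localisation machinery:
\begin{enumerate}
\item it globalises the vanishing of $\cohom{k}(B\Dtensor_{R\#G}DB)$ for $k\neq 0,-3$;
\item it uses \ref{ctw res cohom in 2 degrees} to identify $C(s\cdot\beta)$ with a single shifted bimodule $\Tor_3^{R\#G}(B,B)$;
\item it shows this bimodule is projective on each side, because its localisations are $\Hom_{B_\idlm}(DB_\idlm,B_\idlm)$ (via \ref{tor ext duality}) and projective dimension is detected locally;
\item it then applies \ref{local tilting implies global}.
\end{enumerate}

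You instead use that $S=R^G$ is central in $R\#G$ and $B$ is finite dimensional. So $B\cong\prod_\idlm B_\idlm$ is a finite product of algebras, and $C(s\cdot\beta)$ is block diagonal with blocks $C(s\cdot\beta)_\idlm\cong C(s'\cdot\beta')$. Each block is a shift of the inverse of $DB_\idlm$ by \ref{inverse bimod} applied locally. Biperfectness and the conditions of \cite[1.8]{mi} are then immediate blockwise. Your argument therefore needs neither the case split between finite global dimension and self-injectivity, nor the identification with $\Tor_3$, nor in fact \ref{local tilting implies global}.

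The one step you should make explicit is why the off-diagonal blocks $e_\idlm C(s\cdot\beta)e_{\idlm'}$ vanish for $\idlm\neq\idlm'$. Since $S$ is central, the left and right $S$-actions agree on $B$ and on $\Tor^{R\#G}_*(B,B)$, so $e_\idlm(-)e_{\idlm'}$ kills both. Since $e_\idlm(-)e_{\idlm'}$ is exact, it also kills the cohomology of the cone.

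What the paper's route buys in exchange is a sharper global statement: the cotwist bimodule is a single shifted bimodule, projective on both sides. It obtains this without invoking the semilocal splitting of $B$.
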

\begin{proof}
    Due to \ref{tw ctw RG}, the twist and cotwist around $F \colon \Db(\modu B) \to \Db(\modu R\#G)$ are
\begin{align*}
    & \cT = \Rderived{\Hom_{R\#G}}(\ker p, -) \colon \Db(\modu R\#G) \to \Db(\modu R\#G), \\
    & \cC = \Rderived{\Hom_{B}}(C(s \cdot \beta), -) \colon \Db(\modu B) \to \Db(\modu B).
\end{align*}
    We will show that $F$ is spherical by proving that both $\cT$ and $\cC$ must be equivalences. 
    
    To see that $\cT$ is an equivalence, note that \ref{local tilting RG} implies that $(\ker p)_\idlm$ is tilting for all ideals $\mathfrak{m} \in \maxspec S$. Moreover, \ref{RG satisfies local setup} \eqref{finite gdim local} and its dual implies that $\pdim_{(R\#G)_\idlm}((\ker p)_\idlm) \leq 3$ and $\pdim_{(R\#G)_\idlm^\op}((\ker p)_\idlm) \leq 3$. Hence, $\ker p$ is tilting by \ref{c local implies z local pre RG}, as required.

    Similarly, to see that $\cC$ is an equivalence, note that \ref{local tilting RG} and \ref{C of local is local of C} imply that $C(s \cdot \beta)_\idlm$ is a bimodule tilting complex for all $\mathfrak{m} \in \maxspec S$. Thus, in light of \ref{local tilting implies global}, it suffices to show that $C(s \cdot \beta)$ is biperfect.
    
    Suppose first that $B$ has finite global dimension. Then, since $C(s \cdot \beta)$ is bounded as a complex of right and a left $B$-modules by \ref{C is bounded}, necessarily it is biperfect. We may thus conclude from \ref{local tilting implies global} that $C(s \cdot \beta)$ is a bimodule tilting complex. 
    
    If, on the other hand, $B$ is self-injective, then we claim that $C(s \cdot \beta)$ seen as a right or left $B$-module is a shifted projective module and, thus, it is biperfect. 
    
    We first show that there is an isomorphism $C(s \cdot \beta) \cong \Tor^{R\#G}_{3}(B, B)[-4]$. Due to \ref{ctw res cohom in 2 degrees}, it suffices to show that $\cohom{k}(B \Dtensor_{R\#G} DB) = 0$ for all $k \neq 0, -3$. Well, by \ref{localisation commutes with tensor} and the fact that localisation is exact,
    \[ \cohom{k}(B \Dtensor_{R\#G} DB) \otimes_{\cR} \cR_{\idlm} \cong \cohom{k}(B_{\idlm} \Dtensor_{{R\#G}_{\idlm}} DB_{\idlm}) \]
    for all $\idlm \in \maxspec \cR$. Moreover, by assumption, $\ker p_{\idlm} = (\ker p_{\idlm})^2$ and so \ref{CMness} and \ref{tor vanishing} imply that $\cohom{k}(B_{\idlm} \Dtensor_{{R\#G}_{\idlm}} DB_{\idlm})$ vanishes for all $k \neq 0, -3$. Hence, $\cohom{k}(B \Dtensor_{R\#G} DB) \otimes_{\cR} \cR_{\idlm}$ vanishes on all  $\idlm \in \maxspec \cR$, whence $\cohom{k}(B \Dtensor_{R\#G} DB) = 0$, as required. 

    Since $C(s \cdot \beta) \cong \Tor^{R\#G}_{3}(B, B)[-4]$, consider the following chain of $B_{\idlm}$-bimodule isomorphisms
     \begin{align*}
         \Tor^{R\#G}_{3}(B, B) \otimes_{\cR} \cR_{\idlm} & \cong \Tor^{(R\#G)_{\idlm}}_{3}(B_{\idlm}, B_{\idlm}) \tag{By \ref{localisation commutes with tensor}} \\
          & \cong \Hom_{(R\#G)_{\idlm}}(DB_{\idlm}, B_{\idlm}) \tag{By \ref{tor ext duality}} \\
          & \cong \Hom_{B_{\idlm}}(DB_{\idlm}, B_{\idlm}) 
     \end{align*}
     Now, since $B_{\idlm}$ is self-injective, then $DB_{\idlm}$ is a projective left and right $B_{\idlm}$-module. Whence, $\Hom_{B_{\idlm}}(DB_{\idlm}, B_{\idlm})$ is projective on either side. Thus, the proof of \ref{c local implies z local pre RG} implies that $C(s \cdot \beta)$ is projective on either side and, thus, is biperfect.  Consequently, \ref{local tilting implies global} implies that $C(s \cdot \beta)$ is a bimodule tilting complex. 
    \end{proof}

\subsection{New derived autoequivalences of $G$-$\Hilb(\C^3)$} \label{sec: GHilb}

In order to construct a spherical twists for the variety $G$-$\Hilb(\C^3)$, the following lemma will be important. 

\begin{lemma} \label{spherical composed with equiv}
    Let $S \colon \cA \to \cB$ be a functor between triangulated categories which admits right adjoint $R$ and left adjoint $L$, and let $E \colon \cB \to \cB'$ be an equivalence of triangulated categories. If $T \colon \cB \to \cB$ is the twist around $S$, then $E \cdot T \cdot E^{-1} \colon \cB' \to \cB'$ is the twist around $E \cdot S \colon \cA \to \cB'$. Moreover, if $C \colon \cA \to \cA$ is the cotwist around $S$, then it is also the cotwist around $E \cdot S$. Hence, if $S$ is spherical, so is $E \cdot S$.
\end{lemma}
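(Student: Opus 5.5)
The plan is to compute the adjoints of $E \cdot S$ and then compare the defining triangles of \ref{twist def} directly. Since $E$ is an equivalence, choose a quasi-inverse $E^{-1}$ and fix unit and counit isomorphisms, upgraded to an adjoint equivalence, so that $E^{-1}$ is both left and right adjoint to $E$. Composing adjunctions, $E\cdot S$ then has right adjoint $R \cdot E^{-1}$ and left adjoint $L \cdot E^{-1}$. I will write the counit of $(E\cdot S, R\cdot E^{-1})$ explicitly as the composite
\[ E S R E^{-1} \xrightarrow{E\,\epsilon^\rmR_{E^{-1}}} E E^{-1} \xrightarrow{\sim} 1_{\cB'}, \]
and its unit as
\[ 1_\cA \xrightarrow{\eta^\rmR} RS \xrightarrow{\sim} R E^{-1} E S, \]
using the standard formula for the unit and counit of a composite adjunction.

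For the twist, apply the exact functor $E(-)E^{-1}$ to the functorial triangle $SR \to 1_\cB \to T \to^+$. This gives a triangle $ESRE^{-1} \to EE^{-1} \to ETE^{-1} \to^+$. Composing with the isomorphism $EE^{-1}\cong 1_{\cB'}$ turns the first map into exactly the counit of $(E\cdot S, R\cdot E^{-1})$ described above. Since cones are unique up to isomorphism, $E\cdot T\cdot E^{-1}$ is the twist around $E\cdot S$. In the bimodule setting of \cref{funct cones}, where $E$ is given by a kernel, this is an honest isomorphism of cone functors and not just an objectwise one.

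For the cotwist, the unit of the composite adjunction is $\eta^\rmR$ followed by an isomorphism. So the triangle $C \to 1_\cA \to RS \to^+$ is isomorphic to $C \to 1_\cA \to RE^{-1}ES \to^+$, and $C$ is also the cotwist around $E\cdot S$.

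Finally, assume $S$ is spherical, so $T$ and $C$ are equivalences by \ref{spherical criteria} and \ref{spherical def}. Then $ETE^{-1}$ is an equivalence, being a composite of equivalences, and the cotwist $C$ is unchanged. Hence conditions (1) and (2) of \ref{spherical criteria} hold for $E\cdot S$, which makes it spherical. The only delicate point is the compatibility of units and counits in the composite adjunction; I expect this to be routine once $E$ is promoted to an adjoint equivalence.
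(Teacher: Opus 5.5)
Your proposal is correct and follows the same route as the paper. You identify $R \cdot E^{-1}$ as the right adjoint of $E \cdot S$, with counit $E\epsilon^{\rmR}_{E^{-1}}$ and unit $\eta^{\rmR}$ up to isomorphism, apply $E(-)E^{-1}$ to the twist triangle, and conclude sphericity because both the twist and the cotwist remain equivalences. Your version is, if anything, slightly more careful: you track the isomorphisms $E E^{-1} \cong 1_{\cB'}$ and $RS \cong R E^{-1} E S$, and you also note the left adjoint $L \cdot E^{-1}$, whereas the paper's Hom-set computation silently treats those isomorphisms as identities.
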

\begin{proof}
    In this proof we will use the less technical adapted definition of twist and cotwist as in \ref{twist def}. For a more rigorous treatment, see e.g.\ \cite[6.8]{Go}.
    
    For each $a \in \cA$ and $b \in \cB$, write the Hom-set isomorphism induced by the adjoint pair $(S, R)$ as
    \[ \Phi_{(a, b)} \colon \Hom_{\cA}(a, R(b)) \to \Hom_\cB(S(a), b). \]
    Let $\eta$ and $\epsilon$ be the unit and counit of this adjunction, respectively.
    
    It is easy to check that the composition $E \cdot S$ admits right adjoint $R \cdot E^{-1}$ since for any $a \in \cA$ and $c \in \cB'$, there are natural isomorphisms
    \[ \Psi_{a, c} \colon \Hom_\cA(a, R \cdot E^{-1}(c) ) \tow{E \Phi_{(a, E^{-1}(c))} } \Hom_{\cB'}(E \cdot S(a), c). \] 
   Hence, the unit $\eta^{R \cdot E^{-1}}$ of the adjunction $(E \cdot S, R \cdot E^{-1})$ is specified by
    \[ \eta^{R \cdot E^{-1}}_{a} = \Psi^{-1}_{(a, E \cdot S(a))}(1_{E \cdot S(a)}) = \Phi^{-1}_{(a, E^{-1} \cdot E \cdot Sa)} \cdot E^{-1}(1_{E \cdot S(a)}) = \Phi_{(a, Sa)}(1_{Sa}) = \eta_{a} \]
    for all $a \in \cA$. Similarly, the counit $\epsilon^{R \cdot E^{-1}}$ is specified by 
    \[ \epsilon^{R \cdot E^{-1}}_{c} = \Psi_{(R \cdot E^{-1}(c), c)}(1_{R \cdot E^{-1}(c)}) = E \Phi_{(R \cdot E^{-1}(c), E^{-1}(c))}(1_{R \cdot E^{-1}(c)}) = E \epsilon_{ E^{-1}(c)} \]
    for all $c \in \cB'$. 
    
    Now, since $T$ is the twist around $S$, there is a functorial triangle
    \[ S \cdot R \tow{\epsilon} 1_\cB \to T \to^+ \]
    in $\cB$ which induces the triangle
    \[ E \cdot S \cdot R \cdot (E)^{-1} \tow{ E \epsilon_{E^{-1}} } 1_{\cB'} \to E \cdot T \cdot E^{-1} \to^+ \]
    in $\cB'$. By the previous argument, $E \epsilon_{E^{-1}}$ is the counit of the adjoint pair $(E \cdot S, R \cdot E^{-1})$. 

    The fact that the cotwist is the same follows from the equality $ \eta^{R \cdot E^{-1}} =  \eta$.
    \end{proof}

\begin{theorem} \label{quot them}
    Assume \cref{quot setup}. Then, the functor
    \[ F_G = \Phi_G \cdot - \Dtensor_B B \colon \Db(\modu B) \to \Db(\coh G \text{-} \Hilb(\C^3)) \]
    is spherical. The twist and cotwist around $F_G$ are
    \begin{align*}
        & \cT_G =  \Phi_G \cdot \Rderived{\Hom_{R\#G}(\ker p, -)} \cdot \Phi_G^{-1} \colon \Db(\coh G \text{-} \Hilb(\C^3)) \to \Db(\coh G \text{-} \Hilb(\C^3)), \\
        & \cC_G = \Rderived{\Hom_B}(C(s \cdot \beta), -) \colon \Db(\modu B) \to \Db(\modu B),
    \end{align*}
    respectively. 
\end{theorem}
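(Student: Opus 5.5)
The plan is to deduce the statement directly from \ref{spherical RG}, \ref{tw ctw RG} and \ref{spherical composed with equiv}, with $\Phi_G$ the BKR equivalence. First, \cref{quot setup} includes \cref{sl mckay setup}, so the derived equivalence
\[ \Phi_G \colon \Db(\modu R\#G) \tow{\sim} \Db(\coh G\text{-}\Hilb(\C^3)) \]
of \cite{BKR} is available. By \ref{spherical RG}, the functor $F = - \Dtensor_B B \colon \Db(\modu B) \to \Db(\modu R\#G)$ is spherical. By \ref{tw ctw RG}, its twist and cotwist are $\cT = \Rderived{\Hom_{R\#G}}(\ker p, -)$ and $\cC = \Rderived{\Hom_B}(C(s \cdot \beta), -)$.

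Next, apply \ref{spherical composed with equiv} with $S = F$, $\cA = \Db(\modu B)$, $\cB = \Db(\modu R\#G)$, $\cB' = \Db(\coh G\text{-}\Hilb(\C^3))$ and $E = \Phi_G$. The hypotheses of that lemma are:
\begin{enumerate}
    \item $F$ has both adjoints on bounded derived categories of finitely generated modules. This was observed just before \ref{tw ctw RG}: $R\#G$ has finite global dimension, so $B$ is perfect on both sides and $F^\LA$, $F^\RA$ preserve $\Db(\modu)$.
    \item $\Phi_G$ is an exact equivalence.
\end{enumerate}
The lemma then gives the following.
\begin{enumerate}
    \item The twist around $F_G = \Phi_G \cdot F$ is $\Phi_G \cdot \cT \cdot \Phi_G^{-1}$, which is precisely $\cT_G$.
    \item The cotwist around $F_G$ is unchanged, so it is $\cC$, which is $\cC_G$.
    \item Since $F$ is spherical, so is $F_G$.
\end{enumerate}
One can also see sphericity directly: $\cT_G$ is a conjugate of the equivalence $\cT$ and $\cC_G = \cC$ is an equivalence, so conditions (1) and (2) of \ref{spherical criteria} hold.

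The main obstacle is whether \ref{spherical composed with equiv} is legitimate at the enhanced level, since twists are defined via functorial cones. I would handle it by noting that $\Phi_G$ is a Fourier--Mukai type functor, so it has a DG enhancement. Conjugating the DG bimodule cone defining the twist (\cref{funct cones}) by this enhancement commutes with taking cones. As the lemma's proof indicates, a rigorous treatment of this point is available in \cite[6.8]{Go}, and I would cite it there rather than redo the argument.
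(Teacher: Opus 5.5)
Your proposal is correct and matches the paper's proof. The paper also obtains the result by combining \ref{spherical RG}, with the explicit twist and cotwist supplied by \ref{tw ctw RG}, and \ref{spherical composed with equiv} applied with $E = \Phi_G$. Your caveat about working at the enhanced level is the same point the paper handles by deferring to the rigorous treatment cited in the proof of \ref{spherical composed with equiv}.
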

\begin{proof}
    This follows by combining \ref{spherical RG} and \ref{spherical composed with equiv}.
\end{proof}

\begin{example} \label{example ghilb}
    As a first example, consider the group $G \subset \SL(3, \C)$ 
    \[ G = \left \langle \begin{pmatrix} \epsilon_3 & 0 & 0 \\ 0 & \epsilon_3 & 0 \\ 0 & 0 & \epsilon_3 \end{pmatrix}\right \rangle \]
    as in \ref{mckay example}. Then, the skew group algebra $R\#G$ can be presented as the path algebra of the quiver $Q$ subject to some relations $I$ as in \ref{mckay example}.

    Let $e_i$ denote the idempotent of $R \#G$ corresponding to vertex $i$ in $Q$, and consider the two-sided ideal $\langle e_i \rangle$ of $R\#G$. Then, the quotients $B_i := R\#G / \langle e_i \rangle$ can be presented as the path algebra of the 3-Kronecker quiver and thus have finite global dimension. 

Therefore, it follows from \ref{quot them} that the functor
    \[\Phi_G \cdot \Rderived{\Hom_{R\#G}(\langle e_i \rangle, -)} \cdot \Phi_G^{-1} \colon \Db(\coh G \text{-} \Hilb(\C^3)) \to \Db( \coh G \text{-} \Hilb(\C^3)) \]
    is the spherical twist around $F_G = \Phi_G \cdot - \Dtensor_B B \colon \Db(\modu B) \to \Db(\coh G \text{-} \Hilb(\C^3))$.
\end{example}

\subsection{Cyclic groups of odd order.} \label{sec: example}

More generally, let $n>3$ be an odd number, $\epsilon_n$ a primitive $n$th root of unity and consider the group
\[ G = \left \langle \begin{pmatrix} \epsilon_n & 0 & 0 \\ 0 & \epsilon_n & 0 \\ 0 & 0 & \epsilon_n^{n -2} \end{pmatrix}\right \rangle \]
acting on $R = \C[x, y, z]$ as in previous examples. Then, $R\#G$ can be presented as the path algebra of a quiver $Q$ with vertices $Q_0 = \{0, 1, \ldots n-1\}$, arrows
    \begin{align*}
        &  k+1 \tow{x_{k} } k, \\
        &  k+1 \tow{y_{k} } k, \\
        &  k-2 \tow{z_{k} } k. 
    \end{align*}
    for all $0 \leq k \leq n-1$ and relations
\begin{align*} I = \langle & x_i y_{i-1} - y_i x_{i-1}, x_i z_{i+2} - z_{i+3} x_{i+2}, y_i z_{i+2} - z_{i+3} y_{i+1} \rangle_{i \in \Z_n}.
\end{align*}
Note that these relations intuitively mean that the arrows corresponding to $x, y, z$ "commute". The case $n = 7$ is \ref{mckay example 2}. 

We will specify precisely which quotients of $R\#G$ are finite dimensional algebras with finite global dimension. To do so, we will need a handful of lemmas.

\begin{lemma} \label{cycle types}
    A cycle at vertex $i$ in $\C Q/I$ must factor through at least one of the following families 
    \begin{align}
        & \alpha_{i-1} \ldots \alpha_{i-n}, \label{f1} \\
        & \alpha_{i-1} \alpha_{i-2} z_{i}, \label{f2} \\
        & \alpha_{i-1} z_{i + 1} z_{i+3} \ldots z_{i+n}, \label{f3} \\
        & z_{i+2} z_{i+4} \ldots z_{i + 2n} \label{f4}
    \end{align}
    for all $i \in \Z_n$ and $\alpha_i \in \{x_i, y_i\}$.
\end{lemma}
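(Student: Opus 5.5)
The plan is to reduce everything to a count of arrow types, using the fact that the relations in $I$ are exactly the commutativity relations between the three kinds of arrows. A cycle at vertex $i$ in $\C Q/I$ is a linear combination of paths from $i$ to $i$. Each summand contains some path of $Q$ from $i$ to $i$, so it suffices to treat a single path $\gamma$. Record its arrows as a word in the letters $x,y,z$, where $\alpha\in\{x,y\}$ decreases the vertex index by $1$ and $z$ increases it by $2$. Let $a$ be the number of $x$- and $y$-arrows in $\gamma$ and $b$ the number of $z$-arrows. Since $\gamma$ is closed we get the congruence $-a+2b\equiv 0 \pmod n$.

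\emph{Step 1 (reordering).} I will show that in $\C Q/I$, $\gamma$ equals every path starting at $i$ whose word is a rearrangement of the word of $\gamma$.
\begin{itemize}
\item Every vertex has exactly one outgoing arrow of each type, so any word in $x,y,z$ determines a unique path from a given starting vertex. Hence every rearrangement really is a path from $i$, and it ends at $i$ because the endpoint depends only on $a$ and $b$.
\item It then suffices to treat adjacent transpositions of two distinct letters. Swapping $xy\leftrightarrow yx$, $xz\leftrightarrow zx$ or $yz\leftrightarrow zy$ at a given position is precisely one of the generators $x_iy_{i-1}-y_ix_{i-1}$, $x_iz_{i+2}-z_{i+3}x_{i+2}$, $y_iz_{i+2}-z_{i+3}y_{i+1}$.
\item I expect the only real bookkeeping obstacle to be this index check. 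For each of the three swaps I would compute the vertices visited by the two length-two paths from a common start, with the convention that $\alpha\beta$ means ``$\alpha$ then $\beta$'', and confirm that they agree with the indices in $I$.
\end{itemize}

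\emph{Step 2 (case analysis on $(a,b)$).} Using Step 1, I will choose a convenient ordering of the letters of $\gamma$ so that it begins with one of the families \eqref{f1}--\eqref{f4} at $i$.
\begin{itemize}
\item If $a\ge 2$ and $b\ge 1$, reorder $\gamma$ to begin with $\alpha\alpha z$. Starting at $i$ this path is $\alpha_{i-1}\alpha_{i-2}z_i$, which is \eqref{f2}.
\item If $b=0$, the congruence gives $a\equiv 0$, so $a\ge n$ and $\gamma$ itself begins with $n$ consecutive $x$/$y$-arrows. This is \eqref{f1}.
\item If $a=0$, then $2b\equiv 0$, and since $n$ is odd, $b\equiv 0$. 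Thus $b\ge n$ and $\gamma$ begins with $n$ $z$-arrows, which is \eqref{f4}.
\item If $a=1$, then $2b\equiv 1$, and since $2$ is invertible mod $n$ this forces $b\equiv (n+1)/2$, so $b\ge (n+1)/2$. Reorder $\gamma$ to begin with the single $\alpha$ followed by $(n+1)/2$ $z$-arrows. From $i$ this is $\alpha_{i-1}z_{i+1}z_{i+3}\cdots z_{i+n}$, which is \eqref{f3}.
\end{itemize}
These four cases exhaust all possibilities for $(a,b)$ with $a+b\ge 1$ (the only other case, $a=b=0$, is the trivial path $e_i$ rather than a cycle).

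\emph{Step 3 (conclusion).} In every case the rearranged path, which equals $\gamma$ in $\C Q/I$, has one of \eqref{f1}--\eqref{f4} at $i$ as an initial subpath. Hence $\gamma$ factors through that family, and applying this to each summand proves the lemma.
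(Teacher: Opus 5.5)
Your proposal is correct and uses essentially the same case split as the paper: no $z$-arrows gives \eqref{f1}, no $x/y$-arrows gives \eqref{f4}, exactly one $x/y$-arrow gives \eqref{f3}, and at least two $x/y$-arrows together with a $z$-arrow gives \eqref{f2}. Your congruence $-a+2b\equiv 0 \pmod n$ makes explicit, including where $n$ odd is needed, what the paper leaves implicit when it writes the pure-$z$ and single-$\alpha$ cycles as powers. One small point for your Step~1 index check: the third generator of $I$ should read $y_iz_{i+2}-z_{i+3}y_{i+2}$, because $z_{i+3}y_{i+1}$ as printed is not composable.
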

\begin{proof}
    Suppose $p$ is a cycle at vertex $i$ in $Q$. If $p$ does not factor through $z_j$ for any $j$, then $p$ is a composition of $x_j$'s and $y_j$'s. Hence, since $p$ must start and end at $i$, it must be that $p = (\alpha_{i-1} \ldots \alpha_{i-n})^r$ for some positive integer $r$ and $\alpha_i \in \{x_i, y_i\}$. Clearly, $p$ factors through \eqref{f1}. 

    On the other hand, if $p$ does not factor through $x_j$ or $y_j$ for any $j$, then $p$ is a composition of $z_j$'s and so necessarily $p = (z_{i+2} z_{i+4} \ldots z_{i + 2n})^r$, otherwise $p$ would not start and end at $i$. Hence, $p$ factors through \eqref{f4}.

   It remains to consider the case where $p$ factors through $z_s$ and through $\alpha_t \in \{x_t. y_t\}$ for some $s$ and $t$. Since the "variables" $x, y, z$ "commute" via the relations in $I$, then we may write the path $p = \alpha_{i-1} z_{i+1} p'$ where $p'$ is a path $p' \colon i+1 \to i$. 
   
   Now, if $p'$ does not factor through $x_j$ or $y_j$ then it is a composition of $z_j's$ and we may write $p' = (z_{i+3} z_{i+ 5} \ldots z_{i + n})(z_{i+2} z_{i+4} \ldots z_{i + 2n})^r$. Thus, $p$ factors through \eqref{f3}. 
    
    If, however, $p'$ does factor through $\alpha_m \in \{x_m, y_m\}$ for some $m$, then we may write $p' = \alpha_i p^{''}$. Thus, $p = \alpha_{i-1} z_{i+1} \alpha_i p^{''}$ which is equivalent to $\alpha_{i-1} \alpha_{i-2} z_{i} p^{''}$ and thus factors through \eqref{f2}.
\end{proof}

\begin{notation}
    Let $e \in R\#G$ be an idempotent not equal to $0$. Since the primitive idempotents of $R\#G$ are the vertex idempotents $e_i$ for $i \in Q_0$, then there is a set $V \subset Q_0$ such that  $e = \sum_{i \in V} e_i$.

   Let $Q_{\hat{V}}$ denote the subquiver of $Q$ obtained by deleting the vertices in $V$ and any arrows which enter or leave the deleted vertices. Given this notation, it is straightforward to show that $\C Q = \C Q_{\hat{V}} \oplus \C Q \, e \, \C Q$. Hence, for any path $\alpha \in \C Q$, we may write $\alpha = \alpha_{\hat{V}} + \alpha_V$. Therefore, let 
    \begin{align*}
       I_{\hat{V}} & = \langle \{ \alpha_{\hat{V}} \mid \alpha \in I \} \rangle.
   \end{align*} 
   It is straightforward to check that $R\#G/ \langle e \rangle \cong \C Q_{\hat{V}}/ I_{\hat{V}}$. 
\end{notation}

\begin{lemma} \label{quot is fd}
    Let $e \in R\#G$ be an idempotent not equal to $0$. Then, $R\#G/ \langle e \rangle$ is finite dimensional.
\end{lemma}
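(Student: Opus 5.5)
Since $e \neq 0$, there is some $i_0 \in V$, and I will show that $B := R\#G/\langle e \rangle \cong \C Q_{\hat{V}}/I_{\hat{V}}$ is spanned by finitely many paths. The approach is to bound the length of nonzero paths in $B$. $Q$ has finitely many vertices and arrows, so there are only finitely many paths of any bounded length. It therefore suffices to find $N$ such that every path of length $\geq N$ in $Q$ either lies in $\langle e \rangle$ or is equal, modulo $I$, to a path passing through a vertex of $V$. Modulo $I$, a path through a vertex $v$ factors as $\gamma e_v \delta \in \langle e \rangle$.

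The first step is to reduce to cycles. A path of length $\geq n+1$ in a quiver with $n$ vertices revisits some vertex, so it contains a subpath that is a cycle at some vertex $i$. By \ref{cycle types}, this cycle factors (modulo $I$) through one of the families \eqref{f1}--\eqref{f4} based at $i$.

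The second step is to check that each family, up to the commutation relations, visits every vertex or can be rewritten to pass through $i_0$.
\begin{itemize}
\item Family \eqref{f1}, $\alpha_{i-1}\cdots\alpha_{i-n}$, visits all $n$ vertices, so it passes through $i_0$.
\item Family \eqref{f4} is a product of $n$ arrows $z$, each shifting the vertex by $2$. Since $n$ is odd, $2$ generates $\Z_n$, so this family also visits every vertex.
\item Family \eqref{f3} is $\alpha_{i-1}$ followed by $z$-steps of size $2$ covering $(n+1)/2$ steps. Here I would use the relations $x z = z x$ and $y z = z y$ to move $\alpha_{i-1}$ past the $z$'s. This shows the path can be rewritten so that its intermediate vertex is any prescribed one; equivalently, the vertices visited by all commuted rewritings cover $\Z_n$.
\end{itemize}

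The main obstacle is family \eqref{f2}, $\alpha_{i-1}\alpha_{i-2}z_i$, which is a short cycle of length $3$ touching only three vertices. For this family I would argue directly on long paths rather than on the cycle itself. A path containing many copies of \eqref{f2}-type cycles can be rewritten, using commutativity of $x, y, z$, as a monomial $x^a y^b z^c$ from its start vertex $j$. The intermediate vertices can be chosen along any ordering of the letters, with the $\alpha$-steps moving the vertex by $-1$ and the $z$-steps by $+2$. Once $a+b \geq n$ or $c \geq n$, I can order the letters so that all the $\alpha$'s come first (or all the $z$'s come first), which sweeps through all residues and hence through $i_0$. So with $N$ roughly $3n^2$, any path of length at least $N$ has $a+b \geq n$ or $c \geq n$ and thus lies in $\langle e \rangle$. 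This shows that $B$ is spanned by paths of bounded length and is therefore finite dimensional.
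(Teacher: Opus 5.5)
Your proof is correct, but its decisive step differs from the paper's.

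The paper uses \ref{cycle types} to show that a cycle avoiding $V$ cannot be of type \eqref{f1} or \eqref{f4}. Such a cycle therefore contains at least one $x$- or $y$-arrow. A composite of $n$ such cycles then contains $n$ of these arrows, and the commutativity relations move them to the front to form $\alpha_{i-1}\cdots\alpha_{i-n}$. That path runs through every vertex, hence through $V$.

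Your final paragraph instead treats an arbitrary path as a monomial $x^a y^b z^c$ and applies pigeonhole to both kinds of letters. If the length is at least $2n-1$, then $a+b\geq n$ or $c\geq n$. Moving $n$ letters of one kind to the front then sweeps through all of $\Z_n$; for the $z$-arrows this works because $2$ is invertible modulo the odd number $n$.

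That argument is self-contained. Your reduction to cycles and the family-by-family check, including the rewriting of \eqref{f3}, can be deleted. Your bound $N\approx 3n^2$ can also be replaced by $N=2n-1$.

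What your route buys is independence from the cycle classification and an explicit length bound. It also makes transparent the step from the vanishing of long products to finite dimensionality, which the paper leaves implicit since it only treats consecutive products of $n$ cycles. The paper's route, in exchange, only has to track the $x$- and $y$-arrows, because it uses \ref{cycle types} to exclude pure $z$-cycles.
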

\begin{proof}
    We will show that for any choice of cycles $p_1, \ldots, p_n$ in $\C Q_{\hat{V}}/ I_{\hat{V}}$ then their composition is in $I_{\hat{V}}$. Thus, $R\#G/ \langle e \rangle$ is finite dimensional. 
    
    Let $p$ be a cycle in $\C Q_{\hat{V}}/I_{\hat{V}}$, then it is a cycle in $\C Q/I$. By \ref{cycle types}, $p$ must factor through at least one of the following families of cycles
    \begin{align*}
        & \alpha_{i-1} \ldots \alpha_{i-n},  \\
        & \alpha_{i-1} \alpha_{i-2} z_{i}, \\
        & \alpha_{i-1} z_{i + 1} z_{i+3} \ldots z_{i+n}, \\
        & z_{i+2} z_{i+4} \ldots z_{i + 2n}.
    \end{align*}
    However, note that $z_{i+2} z_{i+4} \ldots z_{i + 2n}$ and $\alpha_{i-1} \ldots \alpha_{i-n}$ factor through all vertices in $Q_0$ and hence factor through vertices in $V$. Therefore, $p$ cannot factor through these two families and must either factor through $\alpha_{i-1} \alpha_{i-2} z_{i}$ or $\alpha_{i-1} z_{i + 1} z_{i+3} \ldots z_{i+n}$. In either case, necessarily $p$ factors through at least one $\alpha_i$. 

    Therefore, since the variables $x, y, z$ "commute" via the relations in $I$, given any choice of cycles $p_1, \ldots, p_n$ then the composition 
    \begin{align*}
        p_1 \ldots p_n - \alpha_{i-1} \alpha_{i-2} \ldots \alpha_{i-n} p^{'} \in I.
    \end{align*}
    Now $p_1 \ldots p_n \in \C Q_{\hat{V}}$ by construction and so $(p_1 \ldots p_n)_{\hat{V}} =  p_1 \ldots p_n$. Moreover, $\alpha_{i-1} \ldots \alpha_{i-n}$ factors through vertices in $V$ and so $(\alpha_{i-1} \alpha_{i-2} \ldots \alpha_{i-n} p^{'})_{\hat{V}} = 0$. Thus,
        \begin{align*}
        p_1 \ldots p_n = (p_1 \ldots p_n - \alpha_{i-1} \alpha_{i-2} \ldots \alpha_{i-n} p^{'})_{\hat{V}} \in I_{\hat{V}}.
    \end{align*}
\end{proof}

\begin{prop} \label{fin gdim}
    Let $e \in R\#G$ be an idempotent not equal to $0$. If $Q_{\hat{V}}$ seen as a subquiver of $Q$ does not contain four successive vertices, then $R\#G/ \langle e \rangle$ has finite global dimension.
\end{prop}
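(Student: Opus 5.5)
The plan is to show that $B := R\#G/\langle e\rangle \cong \C Q_{\hat{V}}/I_{\hat{V}}$ is a finite dimensional algebra whose quiver $Q_{\hat{V}}$ has no oriented cycles. For such an algebra the global dimension is bounded by the length of the longest path plus one, or by the number of vertices. Finite dimensionality is already \ref{quot is fd}, so the work is combinatorial: use the hypothesis that $Q_{\hat{V}}$ contains no four successive vertices $j, j+1, j+2, j+3 \pmod n$ to rule out cycles in $Q_{\hat{V}}$.

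\textbf{Step 1 (no cycles).} By \ref{cycle types}, any nonzero cycle at a vertex $i$ of $\C Q/I$ factors through one of the families \eqref{f1}--\eqref{f4}. Families \eqref{f1} and \eqref{f4} pass through every vertex of $Q$, so they cannot survive in $Q_{\hat{V}}$ since $V \neq \emptyset$. This is exactly the argument of \ref{quot is fd}.

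The work is in \eqref{f2} and \eqref{f3}. The path $\alpha_{i-1}\alpha_{i-2}z_i$ visits the vertices $i, i-1, i-2$ and then returns along $z_i\colon i-2\to i$. The path \eqref{f3} visits $i, i-1$ and then the vertices $i+1, i+3, \ldots$ stepping by $2$, which eventually covers all residues. However, we must work with the honest quiver $Q_{\hat{V}}$, not with $\C Q/I$ up to commutation relations. An oriented cycle in the quiver $Q_{\hat{V}}$ is a sequence of arrows with steps $-1$ (from $x,y$) and $+2$ (from $z$) summing to a multiple of $n$. Since every arrow changes the vertex by at most $2$ and the vertices visited are consecutive modulo $n$ up to gaps of one, I would argue as follows. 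A cycle using only $-1$ steps needs all $n$ vertices. A cycle using only $+2$ steps also needs all $n$ vertices, since $n$ is odd. A mixed closed walk with $a$ steps of $-1$ and $b$ steps of $+2$ satisfies $2b-a\equiv 0 \pmod n$. The minimal nontrivial case $2b=a$ is a walk whose vertex set is an interval; a $+2$ step followed by $-1$ steps back requires at least three consecutive vertices, and to close up a walk alternating direction one needs the vertices $j, j+1, j+2$ together with a neighbour, giving four successive vertices.

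I expect this step to be the main obstacle: showing precisely that any closed walk in $Q_{\hat{V}}$ forces four successive vertices (or all vertices) to lie in $Q_{\hat{V}}$. I would first try the following. Consider the set of vertices of $Q_{\hat{V}}$ as a union of maximal runs of consecutive residues, each of length at most $3$ and separated by gaps. A $+2$ arrow can jump a gap of size exactly one, but a $-1$ arrow cannot cross any gap. I would then track a potential function along the walk to show that a closed walk must either stay in one run (too short to close with $-1$ and $+2$ steps unless of length at least $4$, since $-1,-1,+2$ needs vertices $i-2, i-1, i$, which is three vertices) or wrap around the whole circle. Since the three-vertex cycle $\alpha_{i-1}\alpha_{i-2}z_i$ would live on only three vertices, I should double-check whether that cycle is actually killed in $B$. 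If it is not, I would instead aim to show that such cycles lie in the radical square and are nilpotent with paths of bounded length, and then switch to the strategy below.

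\textbf{Step 2 (finite global dimension).} If Step 1 yields that $Q_{\hat{V}}$ is acyclic, then $B$ is a finite dimensional quotient of a path algebra of an acyclic quiver, hence triangular. Ordering the vertices so that arrows increase, the simples admit finite projective resolutions by induction along this order, so $\gldim B \leq |Q_0\setminus V| - 1$. If three-vertex cycles do survive, the fallback is to compute the projective resolutions of the simples $S(j)$ directly for each possible run type (runs of length $1$, $2$, $3$ with the induced $x,y,z$ relations) and check that each resolution terminates.
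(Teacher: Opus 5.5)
Your Step~1 is false, and the fallback cannot be completed either: the statement as written fails whenever $Q_{\hat{V}}$ has a maximal run of exactly three successive vertices. Three successive vertices already carry an oriented cycle. If $i-2,i-1,i\notin V$, then $x_{i-1}\colon i\to i-1$, $x_{i-2}\colon i-1\to i-2$ and $z_i\colon i-2\to i$ all lie in $Q_{\hat V}$, so the extra neighbour you require to close a walk is not needed. Since $I_{\hat V}$ lies in the square of the arrow ideal, $Q_{\hat V}$ is the Gabriel quiver of $B$, and the triangular argument of Step~2 is unavailable. You are right that this cycle dies in $B$. If $i+1,i-3\in V$, the commuting partners of $\alpha_{i-2}z_i$ and $z_i\alpha_{i-1}$ pass through $V$, so both paths are zero. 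But this does not give finite global dimension. The commutativity relation $x_{i-1}y_{i-2}-y_{i-1}x_{i-2}$ involves only the vertices $i,i-1,i-2$, so it survives in $I_{\hat V}$, and it makes the syzygies periodic.

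Concretely, take $n=7$ and $V=\{3,4,5,6\}$, which is algebra (f) of Figure~\ref{fig:quot idemp}.
\begin{itemize}
\item The relations are $I_{\hat V}=\langle x_0z_2,\,y_0z_2,\,z_2x_1,\,z_2y_1,\,x_1y_0-y_1x_0\rangle$; the figure omits the last generator.
\item Hence $P(0)=\langle e_0,z_2\rangle$, $P(1)=\langle e_1,x_0,y_0\rangle$ and $P(2)=\langle e_2,x_1,y_1,x_1x_0,x_1y_0,y_1y_0\rangle$.
\item So $\Omega S(0)=\radd P(0)\cong S(2)$, and $\Omega^2 S(0)=\radd P(2)$ is $5$-dimensional.
\item Its projective cover $P(1)^{\oplus 2}\xrightarrow{(x_1\cdot,\,y_1\cdot)}\radd P(2)$ is $6$-dimensional, with kernel spanned by $(y_0,-x_0)$.
\item Therefore $\Omega^3S(0)\cong S(0)$ and $\pdim S(0)=\infty$.
\end{itemize}
Equivalently, the Cartan matrix $(\dim e_iBe_j)=\left(\begin{smallmatrix}1&0&1\\2&1&0\\3&2&1\end{smallmatrix}\right)$ has determinant $2$, so it is not invertible over $\Z$. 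The same happens in the paper's Case~1: its assertion $P(1)^{\oplus 2}\cong\ker q_1$ overlooks exactly this relation. So no proof of the proposition as stated is possible.

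What is true is the version with ``three'' in place of ``four'', and for that your acyclicity route works and is shorter than the paper's case analysis. Suppose every maximal run has at most two vertices. Then the lower vertex $v$ of a run $\{v,v+1\}$ is a sink, since $v-1,v+2\in V$. Every $x$- or $y$-arrow of $Q_{\hat V}$ ends at such a sink, so an oriented cycle can use only $z$-arrows. A cycle of $z$-arrows visits all $n$ vertices because $n$ is odd, contradicting $V\neq\emptyset$. Hence $B$ has an acyclic quiver and global dimension at most $|Q_0\setminus V|-1$.
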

\begin{proof} 
     Since $R\#G/ \langle e \rangle$ is finite dimensional by \ref{quot is fd}, it has finite global dimension if and only if all of its simple modules have finite projective dimension. Hence, we will show that every simple module of $R\#G/ \langle e \rangle$ admits a finite projective resolution.

    Since $Q_{\hat{V}}$ does not contain four successive vertices then, up to a relabelling, the neighbourhood around a vertex in $Q_{\hat{V}}$ falls into one of three cases
    \[
    \begin{tikzpicture}
        \foreach \b [count=\a] in {0,...,5}{%
        \ifthenelse{\a < 4}{
            \draw (\a*360/6: 1.8cm) node [circle, fill=black, inner sep =0.4mm, align=center] (\a) {};
            \draw (\a*360/6: 2.1cm) node [circle, align=center]{ \footnotesize \b};
            }{
                \ifthenelse{\a = 5}{
                    \draw (\a*360/6: 1.8cm) node [circle, draw=black, inner sep =0.4mm, align=center] (\a) {};
                    \draw (\a*360/6: 2.1cm) node [circle, align=center]{\footnotesize \b};
                }{ 
                    \draw (\a*360/6: 1.8cm) node [circle, inner sep =0.4mm, align=center] (\a) {$\times$};
                    \ifthenelse{\a < 6}{
                    \draw (\a*360/6: 2.1cm) node [circle, align=center]{\footnotesize \b};}{
                    \draw (\a*360/6: 2.3cm) node [circle, text width=10mm, align=center] (\b) {\footnotesize $n-1$};}
                }
            }
        }
        \foreach \b [count=\a] in {6,...,11}{%
        \draw (\a*360/6: 2.2cm) node [circle, text width=10mm, align=center] (\b) {};
}
        \foreach \b [count=\a] in {12,...,17}{%
        \draw (\a*360/6: 1.7cm) node [circle, text width=5mm, align=center] (\b) {};
}

    \draw [draw = magenta, ->, thick] (8) edge node[pos=0.5, fill=white, inner sep=1mm] {\footnotesize $x_{1}$} (7); 
    \draw [draw = MidnightBlue, ->, thick] (14) edge node[pos=0.5, fill=white, inner sep=1mm] {\footnotesize $y_{1}$} (13); 

    \draw [draw = magenta, ->, thick] (7) edge node[pos=0.5, fill=white, inner sep=1mm] {\footnotesize $x_{0}$} (6); 
    \draw [draw = MidnightBlue, ->, thick] (13) edge node[pos=0.5, fill=white, inner sep=1mm] {\footnotesize $y_{0}$} (12); 

    \draw [draw=ForestGreen, ->, thick] (12) edge node[pos=0.5, fill=white, inner sep=1mm] {\footnotesize $z_{2}$} (14); 
     \draw [draw=ForestGreen, ->, dashed] (14) edge node[pos=0.5, fill=white, inner sep=1mm] {\footnotesize $z_{4}$} (16); 
    
    \draw (0, -3) node {Case 1};

%%%%%%%%%%%%%%
           \foreach \b [count=\a] in {0,...,4}{%
        \ifthenelse{\a < 3}{
            \draw ([xshift=5.25cm] \a*360/5: 1.6cm) node [circle, fill=black, inner sep =0.4mm, align=center] (\a) {};
            \draw ([xshift=5.25cm] \a*360/5: 1.9cm) node [circle, align=center]{\footnotesize \b};
            }{
                \ifthenelse{\a = 4}{
                    \draw ([xshift=5.25cm] \a*360/5: 1.6cm) node [circle, draw=black, inner sep =0.4mm, align=center] (\a) {};
                    \draw ([xshift=5.25cm] \a*360/5: 1.9cm) node [circle, align=center]{\footnotesize \b};
                }{ 
                    \draw ([xshift=5.25cm] \a*360/5: 1.6cm) node [circle, inner sep =0.4mm, align=center] (\a) {$\times$};
                    \ifthenelse{\a < 5}{
                    \draw ([xshift=5.25cm] \a*360/5: 1.9cm) node [circle, align=center]{\footnotesize \b};}{
                    \draw ([xshift=5.25cm] \a*360/5: 2.1cm) node [circle, text width=10mm, align=center] (\b) {\footnotesize $n-1$};}
                }
            }
        }
        \foreach \b [count=\a] in {5,...,9}{%
        \draw ([xshift=5.25cm] \a*360/5: 2cm) node [circle, text width=10mm, align=center] (\b) {};
}
        \foreach \b [count=\a] in {10,...,14}{%
        \draw ([xshift=5.25cm] \a*360/5: 1.5cm) node [circle, text width=5mm, align=center] (\b) {};
}

    \draw [draw = magenta, ->, thick] (6) edge node[pos=0.5, fill=white, inner sep=1mm] {\footnotesize $x_{0}$} (5); 
    \draw [draw = MidnightBlue, ->, thick] (11) edge node[pos=0.5, fill=white, inner sep=1mm] {\footnotesize $y_{0}$} (10); 
    \draw [draw=ForestGreen, ->, dashed] (11) edge node[pos=0.5, fill=white, inner sep=1mm] {\footnotesize $z_{3}$} (13); 
    
    \draw (5, -3) node {Case 2};

    %%%%%%%%%%%%%%
           \foreach \b [count=\a] in {0,...,3}{%
        \ifthenelse{\a = 1}{
            \draw ([xshift=9.8cm] \a*360/4: 1.3cm) node [circle, fill=black, inner sep =0.4mm, align=center] (\a) {};
            \draw ([xshift=9.8cm] \a*360/4: 1.6cm) node [circle, align=center]{\footnotesize \b};
            }{
                \ifthenelse{\a = 3}{
                    \draw ([xshift=9.8cm] \a*360/4: 1.3cm) node [circle, draw=black, inner sep =0.4mm, align=center] (\a) {};
                    \draw ([xshift=9.8cm] \a*360/4: 1.6cm) node [circle, align=center]{\footnotesize \b};
                }{ 
                    \draw ([xshift=9.8cm] \a*360/4: 1.3cm) node [circle, inner sep =0.4mm, align=center] (\a) {$\times$};
                    \ifthenelse{\a < 4}{
                    \draw ([xshift=9.8cm] \a*360/4: 1.6cm) node [circle, align=center]{\footnotesize \b};}{
                    \draw ([xshift=9.8cm] \a*360/4: 1.8cm) node [circle, text width=10mm, align=center] (\b) {\footnotesize $n-1$};}
                }
            }
        }

        \foreach \b [count=\a] in {4,...,7}{%
        \draw ([xshift=9.8cm] \a*360/4: 1.3cm) node [circle, text width=5mm, align=center] (\b) {};
        }
        \draw [draw=ForestGreen, ->, dashed] (4) edge node[pos=0.5, fill=white, inner sep=1mm] {\footnotesize $z_{2}$} (6); 
    
    \draw (10, -3) node {Case 3};
    \end{tikzpicture}
    \]
    where the shaded vertices are in $Q_{\hat{V}}$, the vertices marked with a $\times$ have been removed from $Q$ (i.e.\ they are not in $Q_{\hat{V}}$) and the vertex corresponding to a non-filled in circle means that it may or may not be in $Q_{\hat{V}}$.

    Hence, it suffices to show that the simple modules at each vertex in cases 1--3 have finite projective dimension. Throughout, let $S(i)$ denote the simple module at vertex $i$. Moreover, let $2s$ be the smallest even vertex in $V$ (i.e.\ it is the smallest even vertex not in $Q_{\hat{V}}$). If there is no even vertex in $V$ set $2s = \infty$. For each $i$, let $\delta_i = 0$  if $2s \leq i$ and $\delta_i = 1$ otherwise. 

    \begin{itemize}
        \item[\textit{Case 1.}] First, notice that $z_3$ is deleted from $Q$. So, given the commutativity relations in $I$, then $ \alpha_0 z_2, \ z_2 \alpha_1  \in I_{\hat{V}}$ for $\alpha_i \in \{x_i, y_i\}$. Keeping this in mind, we compute the minimal projective resolution of $S(0)$.

        The projective cover of $S(0)$ is $q_0 \colon P(0) \to S(0)$ and has kernel $\radd P(0)$. The module $P(0)$ can be computed as the quiver representation where
        {\small
        \[ P(0)_i = \begin{cases}
            \Span_k(\{e_0\}) & i = 0, \\
             \Span_k( \{\delta_i z_2 \ldots z_{i} \}) & 0 < i = 2j, \\
             0 & i = 2j + 1. \\
        \end{cases}
        \]}
        and so 
        {\small
        \[ \left( \radd P(0) \right)_i = \begin{cases}
             \Span_k( \{\delta_i z_2 \ldots z_{i} \}) & 0 < i = 2j, \\
             0 & \text{otherwise.}
        \end{cases}
        \]}
        It is straightforward to check that the projective cover of $\radd P(0)$ is $q_1 \colon P(2) \tow{z_2 \cdot} \radd P(0)$. Moreover, there is an isomorphism $P(1)^{\oplus 2} \underset{\sim}{\tow{(x_1 \cdot, y_1 \cdot)}} \ker q_1$. Hence, the minimal projective resolution of $S(0)$ is 
        \[ 0 \to P(1)^{\oplus 2} \tow{(x_1 \cdot, y_1 \cdot)} P(2) \tow{z_2 \cdot} P(0) \to S(0) \to 0. \]
        Following a similar strategy, it is straightforward to check that $\radd P(1) \cong S(0)^{\oplus 2}$ and so the minimal projective resolution of $S(1)$ is
        \[  0 \to P(1)^{\oplus 4} \to P(2)^{\oplus 2} \to P(0)^{\oplus 2} \to P(1) \to S(1) \to 0. \]
       
        Finally to compute the projective resolution of $S(2)$, we need to consider two cases: either $4 \in Q_{\hat{V}}$ or $4 \notin Q_{\hat{V}}$. In the latter case, it is easy to check that there is an isomorphism $P(1)^{\oplus 2} \xrightarrow[\sim]{(x_1 \cdot, y_1 \cdot)} \radd P(2)$ and so the minimal projective resolution of $S(2)$ is
        \[ 0 \to P(1)^{\oplus 2} \to P(2) \to S(2) \to 0. \]
        On the other hand, if $4 \in Q_{\hat{V}}$, then  $P(1)^{\oplus 2} \oplus P(4) \xrightarrow[\sim]{(x_1 \cdot, y_1 \cdot, z_4 \cdot)} \radd P(2)$ and so the projective resolution of $S(2)$ is
        \[ 0 \to P(1)^{\oplus 2} \oplus P(4) \to P(2) \to S(2) \to 0. \]
        Thus, all of the simple modules in Case 1 have finite projective dimension.
        \item[\textit{Case 2.}] In Case 2, notice that the vertex $0$ is a sink. Hence, $P(0) \cong S(0)$ and so it suffices to compute the minimal projective resolution of $S(1)$. To do so, we follow a similar strategy as in Case 1.
        
        If $3 \notin Q_{\hat{V}}$, then it is straightforward to check that $P(0)^{\oplus 2} \xrightarrow[\sim]{(x_0 \cdot, y_0 \cdot)} \radd P(1)$. Instead if $3 \in Q_{\hat{V}}$, then $P(0)^{\oplus 2} \oplus P(3) \xrightarrow[\sim]{(x_0 \cdot, y_0 \cdot, z_3 \cdot)} \radd P(1)$. In either case, $\radd P(1)$ is projective and so $S(1)$ has finite projective dimension. 
        \item[\textit{Case 3.}] For the third case, suppose first $2 \notin Q_{\hat{V}}$. Then, $P(0) \cong S(0)$ and we are done. Thus, suppose that $2 \in Q_{\hat{V}}$. Then, there is an isomorphism $P(2) \underset{\sim}{\tow{z_2 \cdot}} \radd P(0)$. In either case, $S(0)$ has finite projective dimension.  \qedhere
    \end{itemize}
 \end{proof}

It turns out that the converse of \ref{fin gdim} is also true. To prove this, the following lemma is key.

\begin{lemma} \label{min res Sn-1}
     Suppose that $Q_{\hat{V}}$ seen as a subquiver of $Q$ contains all vertices $0, \ n-1, \ \ldots, \ n-t$ for some $t>3$. Assume further that $1$ and $n-t-1$ are not contained in $Q_{\hat{V}}$. Then, the beginning of the minimal projective resolution of $S(n-1)$ is
     \begin{equation} \label{eq:min res Sn-1}
          0 \to (\radd P(n-1))^{\oplus 4} \oplus N \to P(0)^{\oplus 2} \tow{(z_0 \cdot, z_0 \cdot)} P(n-2)^{\oplus 2} \tow{(x_{n-2} \cdot, y_{n-2} \cdot)} P(n-1) \to S(n-1) \to 0.
     \end{equation}
\end{lemma}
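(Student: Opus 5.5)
The plan is to compute the first terms of the minimal projective resolution of $S(n-1)$ over $B=\C Q_{\hat V}/I_{\hat V}$ directly, in the same way as Case~1 of \ref{fin gdim}. The main tool is an explicit path basis for the relevant indecomposable projectives $P(i)=e_iB$. By \ref{quot is fd}, $B$ is finite dimensional, so minimal projective covers exist and radicals can be described vertex by vertex. Throughout I will use the commutativity relations of $I$, which say that $x,y,z$ ``commute''. Combined with the hypothesis that $1,n-t-1\notin Q_{\hat V}$, this tells us which compositions become zero in $I_{\hat V}$: any path that can be rewritten to pass through a deleted vertex vanishes.

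\emph{Step 1: describe $P(n-1)$ and $\radd P(n-1)$.} The arrows leaving $n-1$ are $x_{n-2},y_{n-2}\colon n-1\to n-2$ and $z_{1}\colon n-1\to 1$. The arrow $z_1$ is deleted because $1\notin Q_{\hat V}$. Hence $\radd P(n-1)$ is generated by $x_{n-2}$ and $y_{n-2}$, and the projective cover of $\radd P(n-1)$ is the map $(x_{n-2}\cdot,y_{n-2}\cdot)\colon P(n-2)^{\oplus 2}\to \radd P(n-1)$. This gives the first two maps of \eqref{eq:min res Sn-1}.

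\emph{Step 2: compute the first syzygy.} Next I would list the paths starting at $n-2$ in $Q_{\hat V}$. These are $x_{n-3},y_{n-3}$ (into $n-3$, which lies in $Q_{\hat V}$ because $t>3$) and $z_0\colon n-2\to 0$. Then I would determine which combinations are killed by $(x_{n-2}\cdot,y_{n-2}\cdot)$. The key observation is that $x_{n-2}z_0$ and $y_{n-2}z_0$ lie in $I_{\hat V}$: the relation $x_iz_{i+2}-z_{i+3}x_{i+2}$ rewrites them as paths through the deleted vertex $1$, and similarly for $y$. This produces the generators $z_0$ in each component. I would also check that the Koszul-type element $(y_{n-3},-x_{n-3})$ coming from $x_iy_{i-1}-y_ix_{i-1}$ is either already accounted for in the stated form or is absorbed into the later term. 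I would then verify, using the path basis from \ref{cycle types}, that the resulting map $(z_0\cdot,z_0\cdot)\colon P(0)^{\oplus2}\to P(n-2)^{\oplus 2}$ surjects onto the kernel and is a projective cover. Here $t>3$ guarantees that $0,n-1,\dots,n-t$ are all present, so no further cycles truncate $P(0)$ prematurely.

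\emph{Step 3: identify the second syzygy.} This is the main obstacle: showing that the kernel of $(z_0\cdot,z_0\cdot)$ decomposes as $(\radd P(n-1))^{\oplus4}\oplus N$. I would first compute $P(0)$ explicitly as a representation, as was done for $P(0)$ in Case~1. The arrows out of $0$ are $x_{n-1},y_{n-1}\colon 0\to n-1$ and $z_2\colon 0\to 2$, and paths continue down the string $n-1,\dots,n-t$ and along $z$-chains. After that I would match dimension vectors. The components of the kernel generated by $x_{n-1},y_{n-1}$ should be images of paths $0\to n-1\to\cdots$. Via the isomorphism $e_{n-1}B\cdot(\text{arrow})$, these give copies of $\radd P(n-1)$, two from each summand of $P(0)^{\oplus2}$. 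Whatever remains (for example, contributions through $z_2$, or terms depending on whether $2\in Q_{\hat V}$) is collected into $N$. The delicate points are proving that these copies are genuinely isomorphic to $\radd P(n-1)$, which uses left multiplication by $x_{n-1}$ or $y_{n-1}$ being injective on the relevant paths because vertex $1$ is deleted, and showing that the sum is direct.
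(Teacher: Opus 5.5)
\textbf{Your Step~2 cannot be completed.} The displayed sequence is not exact at $P(n-2)^{\oplus 2}$, so the statement as written is false.

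\emph{Why the Koszul element survives.} Since $t>3$, the vertices $n-1,n-2,n-3$ all lie in $Q_{\hat V}$. So the commutativity relation $x_{n-2}y_{n-3}-y_{n-2}x_{n-3}$ survives in $I_{\hat V}$. Hence $(y_{n-3},-x_{n-3})$ is a nonzero element of $\ker(x_{n-2}\cdot,\,y_{n-2}\cdot)$. It has path length one and lives at vertex $n-3$.

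\emph{Why it cannot be absorbed.} Any map $P(0)^{\oplus 2}\to P(n-2)^{\oplus 2}$ is left multiplication by elements of $e_{n-2}Be_0$. Its image therefore lies in $(e_{n-2}Be_0B)^{\oplus 2}$. All relations are homogeneous, so $B$ is graded by path length. The length-one part of $(e_{n-2}Be_0B)^{\oplus 2}$ is spanned by $(z_0,0)$ and $(0,z_0)$ at vertex $0$. So $(y_{n-3},-x_{n-3})$ is not in the image of $(z_0\cdot,z_0\cdot)$, nor of any map from $P(0)^{\oplus 2}$. The second term of the minimal resolution must contain $P(n-3)\oplus P(0)^{\oplus 2}$.

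\emph{A concrete check.} Take $n=7$ and $V=\{1,2\}$, so $t=4$. The algebra $\C Q/I$ has a basis of pairs (start vertex, monomial in $x,y,z$). Such a class survives in $B$ if and only if no reordering of it passes through $V$. This gives $\dim P(6)=10$ and $\dim P(5)=12$. Hence $\ker(x_5\cdot,y_5\cdot)$ has dimension $24-9=15$. It is minimally generated by $(y_4,-x_4)$, $(z_0,0)$ and $(0,z_0)$, so its projective cover is $P(4)\oplus P(0)^{\oplus 2}$. The next syzygy then has dimension $6+30-15=21$, which is less than $36=4\dim\radd P(6)$.

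\textbf{The paper's proof, and your Step~3.} The paper's own proof makes the same omission. It asserts $\ker q_1=A^{\oplus 2}$ with $A$ generated by $z_0$, silently dropping $x_{n-2}y_{n-3}=y_{n-2}x_{n-3}$. It also claims that all cycles at $n-2$ vanish. But $z_0x_{n-1}x_{n-2}=x_{n-3}x_{n-4}z_{n-2}\neq 0$ once $n-4\in Q_{\hat V}$.

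The same commutativity breaks the directness you flag as delicate in Step~3:
\begin{itemize}
\item $x_{n-1}y_{n-2}=y_{n-1}x_{n-2}$ is nonzero in $P(0)$, so the images of $\radd P(n-1)$ under $x_{n-1}\cdot$ and $y_{n-1}\cdot$ intersect.
\item These images are not even contained in $\ker(z_0\cdot\colon P(0)\to P(n-2))$, because $z_0x_{n-1}y_{n-2}\neq 0$ for $t\geq 4$. In fact this kernel is spanned by the $x,y$-monomials of length $t-1$ and $t$ only.
\end{itemize}

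To get anything usable for \ref{infinite gdim}, you would have to start from the correct beginning $P(n-3)\oplus P(0)^{\oplus 2}\to P(n-2)^{\oplus 2}\to P(n-1)\to S(n-1)\to 0$ and analyse the higher syzygies afresh. Aiming for the displayed sequence will not work.
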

\begin{proof}
    By assumption, the quiver $Q_{\hat{V}}$ can be drawn as 
    \begin{equation} \label{QV infinite gdim}
    \begin{tikzpicture}[baseline=(current  bounding  box.center)]
        \def\vradius{2.5cm}
        \def\lradius{3cm}
        \def\xradius{2.7cm}
        \def\yradius{2.4cm}
        \def\zradius{2.3cm}
        
        %Node 0
        \draw (1*360/7: \vradius) node [circle, fill=black, inner sep =0.4mm, align=center] (0) {};
        \draw (1*360/7: \lradius) node [circle, align=center]{\footnotesize 0};

        %Node n-1 and n-2 
        \foreach \a in {1,2}{%
            \draw (\inteval{\a+1}*360/7: \vradius) node [circle, fill=black, inner sep =0.4mm, align=center] (\a) {};
            \draw (\inteval{\a+1}*360/7: \lradius) node [circle, align=center]{\footnotesize $n-\a$};
        };

        %Node ...
        \draw (40*360/70: \vradius) node [circle, fill=black, inner sep =0.2mm, align=center] (3) {};
        \draw (39*360/70: \vradius) node [circle, fill=black, inner sep =0.2mm, align=center] (3a) {};
        \draw (41*360/70: \vradius) node [circle, fill=black, inner sep =0.2mm, align=center] (3b) {};
       % \draw (4*360/7: 2.2cm) node [circle, align=center]{\footnotesize $\vdots$};

       %Node n-k
        \draw (5*360/7: \vradius) node [circle, fill=black, inner sep =0.4mm, align=center] (4) {};
        \draw (5*360/7: \lradius) node [circle, align=center]{\footnotesize $n-t$};

        %Node n-k-1
        \draw (6*360/7: \vradius) node [circle, inner sep =0.4mm, align=center] (5) {$\times$};
        \draw (6*360/7: \lradius) node [circle, align=center]{\footnotesize $n-t-1$};

        %Node 1
        \draw (7*360/7: \vradius) node [circle, inner sep =0.4mm, align=center] (6) {$\times$};
        \draw (7*360/7: \lradius) node [circle, align=center]{\footnotesize 1};

        %Setting up nodes for arrow pos.
        \foreach \b [count=\a] in {7,...,13}{%
            \draw (\a*360/7: \xradius) node [circle, text width=7.5mm, align=center] (\b) {};
        }
        \foreach \b [count=\a] in {14,...,20}{%
            \draw (\a*360/7: \yradius) node [circle, text width=5mm, align=center] (\b) {};
        }
        \foreach \b [count=\a] in {21,...,27}{%
            \draw (\a*360/7: \zradius) node (\b) {};
        }

        %x arrows
        \foreach \i [count=\a] in {7,...,8}{%
            \draw [draw = magenta, ->] (\i) edge node[pos=0.5, fill=white, inner sep=1mm,sloped] {\footnotesize $x_{n-\a}$} (\inteval{\i+1}); 
        }
        \foreach \i in {9,...,10}{%
            \draw [draw = magenta, ->] (\i) edge node[pos=0.5, inner sep=1mm,sloped] {} (\inteval{\i+1}); 
        }

        %y arrows
        \foreach \i [count=\a] in {14,...,15}{%
            \draw [draw = MidnightBlue, ->] (\i) edge node[pos=0.5, fill=white, inner sep=1mm,sloped] {\footnotesize $y_{n-\a}$} (\inteval{\i+1}); 
        }
        \foreach \i in {16,...,17}{%
            \draw [draw = MidnightBlue, ->] (\i) edge node[pos=0.5, inner sep=1mm,sloped] { } (\inteval{\i+1}); 
        }

        %z arrows
        \draw [draw = ForestGreen, ->] (23) edge[bend right] node[pos=0.7, fill=white, inner sep=1mm,sloped] {$z_{0}$} (21); 
        \draw [draw = ForestGreen, ->] (24) edge[bend right] node[pos=0.5, fill=white, inner sep=1mm,sloped] {$z_{n-1}$} (22); 
        \draw [draw = ForestGreen, ->] (25) edge[bend right] node[pos=0.5, fill=white, inner sep=1mm,sloped] {$z_{n-s+2}$} (24); 
        \draw [draw = ForestGreen, ->] (24) edge[bend right] node[pos=0.5, fill=white, inner sep=1mm,sloped] {$z_{n-2}$} (23); 
    \end{tikzpicture}
    \end{equation}
    where we allow for the possibility that $n-t-1 = 1 \mod n$.
    
    Throughout, let $2s$ be the smallest even vertex in $V$ (i.e.\ it is the smallest even vertex not in $Q_{\hat{V}}$). If there is no even vertex in $V$ set $2s = \infty$. For each $i$, let $\delta_i = 0$  if $2s \leq i$ and $\delta_i = 1$ otherwise. 

    Notice first that the arrow $z_1$ is not in $Q_{\hat{V}}$, and so $\alpha_{n-2} z_0 \in I_{\hat{V}}$. As a result, all cycles at $0$, $n-1$ and $n-2$ vanish in $\C Q_{\hat{V}}/I_{\hat{V}}$.
    
    Keeping this in mind, it is straightforward to check that $P(0), P(n-1)$, and $P(n-2)$ are the quiver representations specified by
     \begin{fleqn}
    {\small
    \[ P(0)_i \quad \quad = \begin{cases}
        \Span_k(\{ e_{0} \}) & \text{$i = 0$}, \\
        \Span_k\left( \{ \delta _i z_2 \ldots z_i\} \cup \left\{ \alpha_3 \ldots \alpha_r \mid \alpha_j \in \{ x_{n-j}, y_{n-j} \} \right\} \right) & \text{$i=n-r$ even and $n-t \leq i$}, \\
        \Span_k\left( \left \{ \alpha_1 \ldots \alpha_r \mid \alpha_j \in \{ x_{n-j}, y_{n-j} \} \right\} \right) & \text{$i=n-r$ odd and $n-t \leq i$}, \\
        \Span_k(\{\delta_i z_2 \ldots z_i\}) & \text{$i$ even and $0< i < n-t$}, \\
        0 & \text{$i$ odd and $i < n-t$}.
    \end{cases}
    \]
    \[ P(n-1)_i = \begin{cases}
        \Span_k(\{ e_{n-1} \}) &  \text{$i = n-1$},\\
        \Span_k\left( \left \{ \alpha_2 \alpha_3 \ldots \alpha_r \mid \alpha_j \in \{ x_{n-j}, y_{n-j} \}, \right\} \right) \quad \quad \quad \quad \ \ & \text{$n-t \leq i = n - r < n - 1$}, \\
        0 & \text{$i < n-t$}.
    \end{cases}
    \]
    \[ P(n-2)_i = \begin{cases}
        \Span_k(\{ z_0 \})& \text{$i = 0$}, \\
        \Span_k(\{ z_0 x_{n-1}, z_0 y_{n-1} \} & \text{$i = n-1$},\\
        \Span_k(\{ e_{n-2} \}) & \text{$i = n-2$},\\
        \Span_k\left( \left\{ \delta_i z_0 \ldots z_i\} \cup \{ \alpha_3 \ldots \alpha_r \mid \alpha_j \in \{ x_{n-j}, y_{n-j} \} \right\} \right) & \text{$i=n-r$ even and $n-t \leq i < n-2$}, \\
        \Span_k(\{ \alpha_3 \ldots \alpha_r \mid \alpha_j \in \{ x_{n-j}, y_{n-j} \} ) & \text{$i=n-r$ odd and $n-t \leq i < n-1$}, \\
        \Span_k(\{ \delta_i z_0 \ldots z_i\}) & \text{$i$ even and $0 < i < n-t$}, \\ 
        0 & \text{$i$ odd and $i < n-t$}.
    \end{cases}
    \]
    }
    \end{fleqn}
    
    Thus,  $\radd P(n-1)$ is 
    {\small
    \[ \left( \radd P(n-1) \right)_i = \begin{cases}
        \Span_k\left( \left \{ \alpha_2 \alpha_3 \ldots \alpha_r \mid \alpha_j \in \{ x_{n-j}, y_{n-j} \} \right\} \right) & \text{$n-t \leq i = n - r < n - 1$}, \\
        0 & \text{otherwise}.
    \end{cases}
    \]}
    It now is straightforward to see that $q_1 \colon P(n-2)^{\oplus 2} \tow{(x_{n-2} \cdot, y_{n-2} \cdot)} \radd P(n-1)$ is the projective cover of $\radd P(n-1)$. The kernel of this map can be written as $\ker q_1 = A^{\oplus 2}$ where
    {\small
      \[ A_i = \begin{cases}
        \Span_k(\{ z_0 \})& \text{$i = 0$}, \\
        \Span_k(\{ z_0 x_{n-1}, z_0 y_{n-1} \} & \text{$i = n-1$},\\
        \Span_k\left( \{ \delta_i z_0 \ldots z_i\} \right) & \text{$i$ even and $i \neq n-1$}, \\
        0 & \text{otherwise}.
    \end{cases}
    \]}
    The projective cover of $A$ is $q_2 \colon P(0) \tow{z_0 \cdot} A$, the kernel of which is
        \[ (\ker q_2)_i \quad \quad = \begin{cases}
        \Span_k\left( \{ \delta_{n-1} z_2 \ldots z_{n-1}\} \right) & \text{$i = n-1$} \\
        \Span_k\left( \left \{ \alpha_1 \ldots \alpha_t \mid \alpha_j \in \{ x_{n-j}, y_{n-j} \} \right\} \right) & \text{$i=n-r$ and $n-t \leq i<n-1$}, \\
        0 & \text{otherwise}.
    \end{cases}
    \]
    Notice that $\ker q_2 = M \oplus N$ where $N_i = \Span_k\left( \{ \delta_{n-1} z_2 \ldots z_{n-1}\} \right)$ if $i = n-1$ and $N_i = 0$ otherwise and, further,
    \[ M_i \quad \quad = \begin{cases}
        \Span_k\left( \left \{ \alpha_1 \ldots \alpha_t \mid \alpha_i \in \{ x_{n-i}, y_{n-i} \} \right\} \right) & \text{$i=n-t$ and $n-s \leq i$}, \\
        0 & \text{otherwise}.
    \end{cases}
    \]
    The key point to notice is that there is an isomorphism $(\radd P(n-1))^{\oplus 2} \xrightarrow[\sim]{ (x_1 \cdot, y_1 \cdot)} M$. We have thus shown that start of the minimal projective resolution of $S(n-1)$ is \eqref{eq:min res Sn-1}.
\end{proof}

\begin{prop} \label{infinite gdim}
    Let $e \in R\#G$ be an idempotent not equal to $0$. If $Q_{\hat{V}}$ seen as a subquiver of $Q$ contains four successive vertices, then $R\#G/ \langle e \rangle$ has infinite global dimension.
\end{prop}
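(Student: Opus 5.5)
Since $V$ is nonempty, $Q_{\hat{V}}$ does not contain every vertex. The plan is to reduce to the configuration of \ref{min res Sn-1} and then show that the resolution there is periodic up to enlarging, so it never stops.

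First, I would choose a maximal run of successive vertices in $Q_{\hat{V}}$ of length at least four, i.e.\ $t+1$ vertices with $t \geq 3$. After the cyclic relabelling $i \mapsto i + c$, which preserves the quiver $Q$ and the relations $I$ up to index shift, I would arrange that this run is exactly $0, n-1, \ldots, n-t$. By maximality, $1$ and $n-t-1$ lie in $V$. The statement of \ref{min res Sn-1} asks for $t > 3$, so I would either check that its proof only uses $t \geq 3$ or handle the boundary case $t = 3$ directly by the same computation. Either way, I obtain the start of the minimal projective resolution \eqref{eq:min res Sn-1} of $S(n-1)$. Its third syzygy $\Omega^3 S(n-1)$ contains $(\radd P(n-1))^{\oplus 4}$ as a direct summand.

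Second, I would exploit this self-similarity. The first syzygy of $S(n-1)$ is $\Omega^1 S(n-1) = \radd P(n-1)$, so $\Omega^3 S(n-1) \cong (\Omega^1 S(n-1))^{\oplus 4} \oplus N$. Taking syzygies commutes with direct sums (up to projective summands), so $\Omega^{k+2}(\Omega^1 S(n-1))$ contains $(\Omega^{k} \Omega^1 S(n-1))^{\oplus 4}$ as a summand for every $k$. By induction, $\Omega^{2m+1} S(n-1)$ contains $(\radd P(n-1))^{\oplus 4^m}$ as a summand. This module is nonzero, and in fact nonprojective: $\radd P(n-1)$ is not projective, because its projective cover $P(n-2)^{\oplus 2} \to \radd P(n-1)$ has nonzero kernel $A^{\oplus 2}$ with $A_0 = \Span_k(z_0) \neq 0$. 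Hence no syzygy of $S(n-1)$ ever becomes projective, so $\pdim S(n-1) = \infty$. This gives $\operatorname{gldim} R\#G/\langle e \rangle = \infty$, using that the quotient is finite dimensional by \ref{quot is fd}, so that global dimension is the supremum of the projective dimensions of the simple modules.

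The main obstacle is the reduction step. I have to make sure the hypotheses of \ref{min res Sn-1} really hold after relabelling, in particular the parity conventions (the $\delta_i$ and the even vertex $2s$), which depend on the labelling. I also need the lemma to apply when $t = 3$, and when the run wraps so that $n-t-1 \equiv 1$. My first move would be to re-run the computation of $P(0)$, $P(n-1)$ and $P(n-2)$ in the lemma with $t = 3$, and to verify that the isomorphism $(\radd P(n-1))^{\oplus 2} \cong M$ still holds. A second, smaller point is that the summand $(\radd P(n-1))^{\oplus 4}$ must genuinely be a direct summand of the syzygy and not just a submodule, since the induction relies on this. I would confirm it from the explicit decomposition $\ker q_2 = M \oplus N$ given in the lemma's proof.
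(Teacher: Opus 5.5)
Your plan follows the paper's proof exactly: relabel so that the maximal run is $0,n-1,\dots,n-t$, invoke \ref{min res Sn-1}, and iterate the summand $(\radd P(n-1))^{\oplus 4}$ of $\Omega^3 S(n-1)$. You are right that $t=3$ is not covered by the lemma as stated. The real problem is worse, though: the resolution \eqref{eq:min res Sn-1} is wrong for every $t\geq 3$. So the re-check you propose will fail, and your induction, like the paper's proof, has nothing to rest on.

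The lemma's computation treats $x,y$-paths as distinct words and forgets the commutativity relations. Take $q_1=(x_{n-2}\cdot,y_{n-2}\cdot)\colon P(n-2)^{\oplus 2}\to\radd P(n-1)$.
\begin{itemize}
\item The element $(y_{n-3},-x_{n-3})$ is nonzero, since $n-3\in Q_{\hat V}$.
\item It lies in $\ker q_1$, because $x_{n-2}y_{n-3}-y_{n-2}x_{n-3}\in I$.
\item It is not in $A^{\oplus 2}$, whose elements all involve $z$-arrows.
\end{itemize}
Hence $P(n-3)$ must occur in the second term of the minimal resolution.

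To see the damage numerically, write $B=R\#G/\langle e\rangle$. Then $e_iBe_j$ has a basis of monomials $x^ay^bz^c$ with $i-a-b+2c\equiv j$ such that no divisor, read from $i$, lands in $V$.
\begin{itemize}
\item For $n=7$ and $V=\{1,2\}$ we have $t=4$, squarely within the lemma's hypotheses. One gets $\dim P(0)=15$, $\dim P(5)=12$ and $\dim P(6)=10$. Exactness of \eqref{eq:min res Sn-1} would force its leftmost term to have dimension $30-24+10-1=15$. But $(\radd P(6))^{\oplus 4}$ alone has dimension $36$.
\item For your boundary case $V=\{1,2,3\}$, the same count gives $13$ against $20$.
\end{itemize}

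The proposition itself does appear to be true, so what is missing is a correct mechanism for infinite projective dimension. For example, take $n=7$ and $V=\{1,2,3\}$, and form the matrix $(\dim e_iBe_j)$ with $i,j$ ordered $0,6,5,4$. Its columns are $(1,0,1,0)$, $(2,1,2,1)$, $(3,2,1,0)$ and $(4,3,2,1)$. The second minus the first equals the fourth minus the third, so the Cartan determinant is $0$. By Eilenberg's theorem this is impossible for a finite-dimensional algebra of finite global dimension.

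To prove the statement in general you need one of two things:
\begin{itemize}
\item a Cartan-determinant argument valid for all odd $n>3$ and all $V$ whose complement contains a run of four vertices; or
\item an honest computation of the syzygies of some simple module that keeps track of these Koszul-type kernels and exhibits a genuine recurrence.
\end{itemize}
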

\begin{proof}
    Suppose that $Q_{\hat{V}}$ has a maximum of $t$ vertices in a row as a subquiver of $Q$. Then, there is a point in $Q_{\hat{V}}$ whose neighbourhood (up to a relabelling) looks like \eqref{QV infinite gdim}. Hence, by \ref{min res Sn-1} up to a relabelling,   the beginning of the minimal projective resolution of $S(n-1)$ is \eqref{eq:min res Sn-1}.
    
    Let $M$ be a module with minimal projective resolution $q \colon Q \to M$. Recall that the $i$th syzygy of $M$ is $\Omega^i M := \ker q^{(i-1)}$. A module has finite projective dimension if and only if $\Omega^i M = 0$ for some $i$. Thus, we will show that $S(n-1)$ has infinite projective dimension by showing that $\Omega^i S(n-1)$ never vanishes.

    Well, we can see from \eqref{eq:min res Sn-1} that  $(\radd P(n-1))^{\oplus 4}$ is a summand of $\Omega S(n-1)$. Proceeding inductively, we see that the dimension of $\Omega^i S(n-1)$ only increases with $i$ and therefore cannot vanish. Hence, necessarily $S(n-1)$ has infinite projective dimension.
\end{proof}

In summary, we can characterise precisely when $R\#G/ \langle e \rangle$ has finite global dimension.

\begin{theorem} \label{criterion}
    Let $e \in R\#G$ be an idempotent not equal to $0$. Then $R\#G/ \langle e \rangle$ has finite global dimension if and only if $Q_{\hat{V}}$ seen as a subquiver of $Q$ does not contain four successive vertices.
\end{theorem}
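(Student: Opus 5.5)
This is the conjunction of \ref{fin gdim} and \ref{infinite gdim}, so the proof consists of checking that these two results together cover every nonzero idempotent. Fix $e \neq 0$ and write $e = \sum_{i \in V} e_i$ with $V \neq \emptyset$, as in the notation preceding \ref{quot is fd}. Since $V$ is nonempty, $Q_{\hat V}$ is a proper subquiver of $Q$. By \ref{quot is fd}, $R\#G/\langle e\rangle \cong \C Q_{\hat V}/I_{\hat V}$ is finite dimensional. Its global dimension is therefore the supremum of the projective dimensions of its finitely many simples $S(i)$, $i \in Q_0\setminus V$. The dichotomy "contains four successive vertices or not" is exhaustive, and the two directions are handled separately.

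For the "if" direction, assume $Q_{\hat V}$ contains no four successive vertices. Then \ref{fin gdim} applies directly. Its proof classifies the neighbourhood of every surviving vertex into Cases 1--3 and gives an explicit finite projective resolution of each simple. Finite global dimension follows.

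For the "only if" direction, I would argue by contrapositive: assume $Q_{\hat V}$ contains four successive vertices and show the global dimension is infinite. Because $V \neq \emptyset$, the run of successive vertices is not the entire cycle $\Z_n$. So I can take a maximal run of length $t+1 \geq 4$ and rotate labels so that it reads $n-t,\dots,n-1,0$, with the neighbouring vertices $1$ and $n-t-1$ lying in $V$. This relabelling is legitimate because the quiver and the relations $I$ are invariant under the cyclic shift $k \mapsto k+c$ on $\Z_n$. After relabelling, the hypotheses of \ref{min res Sn-1} hold, and \ref{infinite gdim} shows that $S(n-1)$ has infinite projective dimension.

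The main obstacle is the inductive step inside \ref{infinite gdim}. One needs the syzygies of $S(n-1)$ to grow without bound: $(\radd P(n-1))^{\oplus 4}$ must be a direct summand of the third syzygy, and the portion of the resolution computed in \ref{min res Sn-1} must then repeat on $\radd P(n-1)$, multiplying the number of copies by $4$ at every period. If I were writing this out, I would check that the resolution of $\radd P(n-1)$ reproduces the same pattern. The kernel of $P(n-2)^{\oplus 2} \to \radd P(n-1)$ is $A^{\oplus 2}$, and $\ker(P(0)\to A)$ splits as $M\oplus N$ with $M\cong(\radd P(n-1))^{\oplus 2}$. This forces $\Omega^{3k}S(n-1)$ to contain $(\radd P(n-1))^{\oplus 4^k}$, which is nonzero for all $k$. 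Granting these earlier results, the theorem is the combination of the two directions.
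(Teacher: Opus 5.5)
\textbf{The ``if'' half fails.} Your reduction is the paper's own: the theorem is recorded there, without further argument, as the conjunction of \ref{fin gdim} and \ref{infinite gdim}. But the ``if'' half is false, so the step ``\ref{fin gdim} applies directly'' is a genuine gap.

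The error is in Case~1 of \ref{fin gdim}, where $0,1,2\in Q_{\hat V}$ and $n-1,3\in V$. The relation $x_1y_0-y_1x_0$ survives in $I_{\hat V}$, because both paths avoid $V$. So $(y_0,-x_0)$ lies in the kernel of $(x_1\cdot,\,y_1\cdot)\colon P(1)^{\oplus 2}\to P(2)$.

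Concretely:
\begin{itemize}
\item $P(1)$ has basis $e_1,x_0,y_0$, since $z_3,x_6,y_6$ are deleted and $x_0z_2=y_0z_2=0$. Hence $P(1)^{\oplus 2}$ is $6$-dimensional.
\item $\ker(z_2\cdot)$ has basis $x_1,y_1,x_1x_0,x_1y_0,y_1y_0$, which is $5$-dimensional.
\item The kernel of $(x_1\cdot,\,y_1\cdot)$ is therefore $\C(y_0,-x_0)\cong S(0)$.
\end{itemize}
It follows that
\[ 0\to S(0)\to P(1)^{\oplus 2}\xrightarrow{(x_1\cdot,\,y_1\cdot)}P(2)\xrightarrow{z_2\cdot}P(0)\to S(0)\to 0 \]
is exact. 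So $\Omega^3S(0)\cong S(0)$ and $\pdim S(0)=\infty$. In other words, every maximal run of exactly three vertices forces infinite global dimension.

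For example, take $n=7$ and $e=e_2+e_6$, which is Figure~\ref{fig:quot idemp}(a). Its listed $I_{\hat V}$ omits $x_4y_3-y_4x_3$. This algebra has $\Omega^3S(3)\cong S(3)$ and Cartan determinant $2$, yet $Q_{\hat V}$ has no four successive vertices. Entries (b), (c) and (f) fail in the same way.

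\textbf{The ``only if'' sketch is also unsupported.} First, four successive vertices corresponds to $t=3$ in \ref{min res Sn-1}, but that lemma assumes $t>3$. Second, both structural facts you quote are false:
\begin{itemize}
\item $\ker q_1$ contains the Koszul element $(y_{n-3},-x_{n-3})$, which is not in $A^{\oplus 2}$. So the projective cover of $\Omega^2S(n-1)$ also involves $P(n-3)$.
\item $\ker q_2$ is spanned by the $x,y$-paths of lengths $t-1$ and $t$ starting at $0$, so it has dimension $2t+1$. By contrast, $(\radd P(n-1))^{\oplus 2}$ has dimension $t(t+1)-2$, so $M\not\cong(\radd P(n-1))^{\oplus 2}$.
\end{itemize}
Hence the claimed $4^k$-growth of syzygies is not supported. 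This direction may still hold; for instance, for $n=7$ and $e=e_4+e_5+e_6$ the Cartan matrix is singular. But it needs a genuinely different argument. The computation above also suggests the correct threshold is three successive vertices rather than four.
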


\begin{example}
    Consider the skew-group algebra of example \ref{mckay example 2}. Then, by \ref{criterion}, the idempotent quotients of $R\#G'$ which have finite global dimension are precisely the ones for which  $Q_{\hat{V}}$ seen as a subquiver of $Q$ does not contain four successive vertices. All possible such quotients (up to isomorphism) are presented in \cref{fig:quot idemp}. To avoid cluttered notation, we omit the arrow labels. For us, a dashed arrow into vertex $i$ represents a $z_i$ arrow. Two solid arrows into vertex $i$ represent arrows $x_i$ and $y_i$.
    
    By \ref{quot them} it follows that each algebra in \cref{fig:quot idemp} induces autoequivalences of $\Db(\modu R\#G')$ and hence of $\Db(\coh G' \text{-} \Hilb(\C^3))$. We may interpret this result as a statement that the categories $\Db(\modu R\#G')$ and $\Db(G' \text{-} \Hilb(\C^3)$ have large autoequivalence groups.

\newpage
        \begin{figure}[h]
        \centering
        
        \begin{tikzpicture}
            \def\vradius{1.1cm}
            \def\xradius{1.4cm}
            \def\yradius{1cm}
            \def\zradius{0.9cm}
            
       %%%%%%%%%%%%%%%%%%%%%%%%%%%%%%%%%%%% 6 + 2 %%%%%%%%%%%%%%%%%%%%%%%%%%%%%%%%%%%%
        \begin{scope}
        %vertices
        \draw (1*360/5: \vradius) node [circle, text width=5mm, align=center] (0) {0};
        \draw (2*360/5: \vradius) node [circle, text width=5mm, align=center] (1) {1};
        \draw (3*360/5: \vradius) node [circle, text width=5mm, align=center] (3) {3};
        \draw (4*360/5: \vradius) node [circle, text width=5mm, align=center] (4) {4};
        \draw (5*360/5: \vradius) node [circle, text width=5mm, align=center] (5) {5};

        \foreach \b [count=\a] in {10,...,14}{%
            \draw (\a*360/5: \xradius) node [circle, text width=9.5mm, align=center] (\b) {};
            \draw (\a*360/5: \yradius) node [circle, text width=5mm, align=center] (\inteval{\b+5}) {};
            \draw (\a*360/5: \zradius) node [circle, text width=5mm, align=center] (\inteval{\b+10}) {};
        }

        \draw [thick,draw = magenta, ->] (11) edge node[pos=0.5, inner sep = 1mm,sloped] {}  (10);
        \draw [thick,draw = magenta, ->] (13) edge node[pos=0.5,inner sep = 1mm,sloped] {}  (12);
        \draw [thick,draw = magenta, ->] (14) edge node[pos=0.5, inner sep = 1mm,sloped] {}  (13);

        \draw [thick, draw = MidnightBlue, ->] (16) edge node[pos=0.5, inner sep = 1mm,sloped] {}  (15);
        \draw [thick,draw = MidnightBlue, ->] (18) edge node[pos=0.5,inner sep = 1mm,sloped] {}  (17);
        \draw [thick,draw = MidnightBlue, ->] (19) edge node[pos=0.5, inner sep = 1mm,sloped] {}  (18);

        \draw [thick,draw = ForestGreen, ->, dashed] (5) edge node[pos=0.5, inner sep=1mm,sloped] {}(0); 
         \draw [thick,draw = ForestGreen, ->, dashed] (1) edge node[pos=0.5, inner sep=1mm,sloped] {}(3); 
         \draw [thick,draw = ForestGreen, ->, dashed] (22) edge node[pos=0.5, inner sep=1mm,sloped] {}(24); 

          \draw  (0, -2.2) node {\small (a) \ $R\#G' / \langle e_6 + e_2 \rangle$};
         \draw (-2.5, 1.2) node[anchor=west] {\small $Q_{ \hat{ \{6, 2\} } } \colon$ };
         \draw  (0, -1.6) node {\small $I_{ \hat{\{6, 2\}} }  = \langle x_3 z_5, y_3 z_5, z_5 x_4, z_5 y_4  \rangle$. };
         \end{scope}

         %%%%%%%%% 6 + 5 + 3 %%%%%%%%%%
         \begin{scope}[xshift=4.9cm]
        %vertices
        \draw (1*360/4: \vradius) node [circle, text width=5mm, align=center] (0) {0};
        \draw (2*360/4: \vradius) node [circle, text width=5mm, align=center] (1) {1};
        \draw (3*360/4: \vradius) node [circle, text width=5mm, align=center] (2) {2};
        \draw (4*360/4: \vradius) node [circle, text width=5mm, align=center] (4) {4};

        \foreach \b [count=\a] in {10,...,13}{%
            \draw (\a*360/4: \xradius) node [circle, text width=10.5mm, align=center] (\b) {};
            \draw (\a*360/4: \yradius) node [circle, text width=5mm, align=center] (\inteval{\b+4}) {};
            \draw (\a*360/4: \zradius) node [circle, text width=5mm, align=center] (\inteval{\b+8}) {};
        }

        \draw [thick, draw = magenta, ->] (11) edge node[pos=0.5, inner sep = 1mm,sloped] {}  (10);
        \draw [thick, draw = magenta, ->] (12) edge node[pos=0.5, inner sep = 1mm,sloped] {}  (11);

        \draw [thick, draw = MidnightBlue, ->] (15) edge node[pos=0.5,inner sep = 1mm,sloped] {}  (14);
        \draw [thick, draw = MidnightBlue, ->] (16) edge node[pos=0.5, inner sep = 1mm,sloped] {}  (15);

        \draw [thick, draw = ForestGreen, ->, dashed] (18) edge node[pos=0.5, inner sep=1mm,sloped] {}(20); 
         \draw [thick, draw = ForestGreen, ->, dashed] (2) edge node[pos=0.5, inner sep=1mm,sloped] {}(4);

         \draw  (0, -2.2) node {\small (b) \ $R\#G' / \langle e_6 + e_5 + e_3 \rangle$};
         \draw (-2.5, 1.2) node[anchor=west] {\small $Q_{ \hat{ \{6, 5, 3\} } } \colon$ };
         \draw  (0, -1.6) node {\small $I_{ \hat{\{6, 5, 3\}} }  = \langle x_0 z_2, y_0 z_2, z_2 x_1, z_2 y_1  \rangle$. };
         \end{scope}

         \begin{scope}[xshift=9.8cm]
        %vertices
        \draw (1*360/4: \vradius) node [circle, text width=5mm, align=center] (0) {0};
        \draw (2*360/4: \vradius) node [circle, text width=5mm, align=center] (1) {1};
        \draw (3*360/4: \vradius) node [circle, text width=5mm, align=center] (2) {2};
        \draw (4*360/4: \vradius) node [circle, text width=5mm, align=center] (5) {5};

        \foreach \b [count=\a] in {10,...,13}{%
            \draw (\a*360/4: \xradius) node [circle, text width=10.5mm, align=center] (\b) {};
            \draw (\a*360/4: \yradius) node [circle, text width=5mm, align=center] (\inteval{\b+4}) {};
            \draw (\a*360/4: \zradius) node [circle, text width=5mm, align=center] (\inteval{\b+8}) {};
        }

        \draw [thick, draw = magenta, ->] (11) edge node[pos=0.5, inner sep = 1mm,sloped] {}  (10);
        \draw [thick, draw = magenta, ->] (12) edge node[pos=0.5, inner sep = 1mm,sloped] {}  (11);

        \draw [thick, draw = MidnightBlue, ->] (15) edge node[pos=0.5,inner sep = 1mm,sloped] {}  (14);
        \draw [thick, draw = MidnightBlue, ->] (16) edge node[pos=0.5, inner sep = 1mm,sloped] {}  (15);

        \draw [thick, draw = ForestGreen, ->, dashed] (18) edge node[pos=0.5, inner sep=1mm,sloped] {}(20); 
         \draw [thick, draw = ForestGreen, ->, dashed] (5) edge node[pos=0.5, inner sep=1mm,sloped] {}(0); 

        \draw  (0, -2.2) node {\small (c) \ $R\#G' / \langle e_6 + e_4 + e_3 \rangle$};
         \draw (-2.5, 1.2) node[anchor=west] {\small $Q_{ \hat{ \{6, 4, 3\} } } \colon$ };
         \draw  (0, -1.6) node {\small $I_{ \hat{\{6, 4, 3\}} }  = \langle x_0 z_2, y_0 z_2, z_2 x_1, z_2 y_1  \rangle$. };
         \end{scope}

        %%%%%%%%%% 6 + 5 + 2 %%%%%%%%%
         \begin{scope}[yshift= -4.5cm]
        %vertices
        \draw (1*360/4: \vradius) node [circle, text width=5mm, align=center] (0) {0};
        \draw (2*360/4: \vradius) node [circle, text width=5mm, align=center] (1) {1};
        \draw (3*360/4: \vradius) node [circle, text width=5mm, align=center] (3) {3};
        \draw (4*360/4: \vradius) node [circle, text width=5mm, align=center] (4) {4};

        \foreach \b [count=\a] in {10,...,13}{%
            \draw (\a*360/4: \xradius) node [circle, text width=10.5mm, align=center] (\b) {};
            \draw (\a*360/4: \yradius) node [circle, text width=5mm, align=center] (\inteval{\b+4}) {};
            \draw (\a*360/4: \zradius) node [circle, text width=5mm, align=center] (\inteval{\b+8}) {};
        }

        \draw [thick, draw = magenta, ->] (11) edge node[pos=0.5, inner sep = 1mm,sloped] {}  (10);
        \draw [thick, draw = magenta, ->] (13) edge node[pos=0.5, inner sep = 1mm,sloped] {}  (12);

        \draw [thick, draw = MidnightBlue, ->] (15) edge node[pos=0.5,inner sep = 1mm,sloped] {}  (14);
         \draw [thick, draw = MidnightBlue, ->] (17) edge node[pos=0.5,inner sep = 1mm,sloped] {}  (16);

        \draw [thick, draw = ForestGreen, ->, dashed] (1) edge node[pos=0.5, inner sep=1mm,sloped] {}(3); 

        \draw  (0, -2.3) node {\small (d) \ $R\#G' / \langle e_6 + e_5 + e_2 \rangle$};
         \draw (-2.5, 1.2) node[anchor=west] {\small $Q_{ \hat{ \{6, 5, 2\} } } \colon$ };
         \draw  (0, -1.7) node {\small $I_{ \hat{\{6, 5, 2\}} }  = 0$. };
         \end{scope}

         \begin{scope}[xshift = 5cm, yshift= -4.5cm]
                     %vertices
        \draw (1*360/4: \vradius) node [circle, text width=5mm, align=center] (0) {0};
        \draw (2*360/4: \vradius) node [circle, text width=5mm, align=center] (1) {1};
        \draw (3*360/4: \vradius) node [circle, text width=5mm, align=center] (3) {3};
        \draw (4*360/4: \vradius) node [circle, text width=5mm, align=center] (5) {5};

        \foreach \b [count=\a] in {10,...,13}{%
            \draw (\a*360/4: \xradius) node [circle, text width=10.5mm, align=center] (\b) {};
            \draw (\a*360/4: \yradius) node [circle, text width=5mm, align=center] (\inteval{\b+4}) {};
            \draw (\a*360/4: \zradius) node [circle, text width=5mm, align=center] (\inteval{\b+8}) {};
        }

        \draw [thick, draw = magenta, ->] (11) edge node[pos=0.5, inner sep = 1mm,sloped] {}  (10);

        \draw [thick, draw = MidnightBlue, ->] (15) edge node[pos=0.5,inner sep = 1mm,sloped] {}  (14);

        \draw [thick, draw = ForestGreen, ->, dashed] (1) edge node[pos=0.5, inner sep=1mm,sloped] {}(3); 
        \draw [thick, draw = ForestGreen, ->, dashed] (3) edge node[pos=0.5, inner sep=1mm,sloped] {}(5); 
        \draw [thick, draw = ForestGreen, ->, dashed] (5) edge node[pos=0.5, inner sep=1mm,sloped] {}(0);

        \draw  (0, -2.3) node {\small (e) \ $R\#G' / \langle e_6 + e_4 + e_2 \rangle$};
         \draw (-2.5, 1.2) node[anchor=west] {\small $Q_{ \hat{ \{6, 4, 2\} } } \colon$ };
         \draw  (0, -1.7) node {\small $I_{ \hat{\{6, 4, 2\}} }  = 0$. };
         \end{scope}

            \def\vradius{0.8cm}
            \def\xradius{1.35cm}
            \def\yradius{0.75cm}
        %%%%%%%%%% 6 + 5 + 4 + 3 %%%%%%%%%%%%%%
        \begin{scope}[yshift=-4.5cm, xshift=9.8cm]
        \draw (1*360/3: \vradius) node [circle, text width=3mm, align=center] (0) {0};
        \draw (2*360/3: \vradius) node [circle, text width=3mm, align=center] (1) {1};
        \draw (3*360/3: \vradius) node [circle, text width=3mm, align=center] (2) {2};

        \foreach \b [count=\a] in {10,...,12}{%
            \draw (\a*360/3: \xradius) node [circle, text width=14mm, align=center] (\b) {};
            \draw (\a*360/3: \yradius) node [circle, text width=3.5mm, align=center] (\inteval{\b+3}) {};
        }

        \draw [thick, draw = magenta, ->] (11) edge node[pos=0.5, inner sep = 1mm,sloped] {}  (10);
        \draw [thick, draw = magenta, ->] (12) edge node[pos=0.5, inner sep = 1mm,sloped] {}  (11);

        \draw [thick, draw = MidnightBlue, ->] (14) edge node[pos=0.5,inner sep = 1mm,sloped] {}  (13);
         \draw [thick, draw = MidnightBlue, ->] (15) edge node[pos=0.5,inner sep = 1mm,sloped] {}  (14);

        \draw [thick, draw = ForestGreen, ->, dashed] (0) edge node[pos=0.5, inner sep=1mm,sloped] {}(2);

        \draw  (0, -2.3) node {\small (f) \ $R\#G' / \langle e_6 + e_5 + e_4 + e_3 \rangle$};
         \draw (-2.5, 1.2) node[anchor=west] {\small $Q_{ \hat{ \{6, 5, 4, 3\} } } \colon$ };
         \draw  (0, -1.7) node {\small $I_{ \hat{\{6, 5, 4, 3\}} }  = \langle x_0 z_2, y_0 z_2, z_2 x_1, z_2 y_1 \rangle$. };
            \end{scope}

         %%%%%%%%%% 6 + 5 + 4 + 2 %%%%%%%%%%%%%%
        \begin{scope}[yshift=-9cm]
        \draw (1*360/3: \vradius) node [circle, text width=3mm, align=center] (0) {0};
        \draw (2*360/3: \vradius) node [circle, text width=3mm, align=center] (1) {1};
        \draw (3*360/3: \vradius) node [circle, text width=3mm, align=center] (3) {3};

        \foreach \b [count=\a] in {10,...,12}{%
            \draw (\a*360/3: \xradius) node [circle, text width=14mm, align=center] (\b) {};
            \draw (\a*360/3: \yradius) node [circle, text width=3.5mm, align=center] (\inteval{\b+3}) {};
        }

        \draw [thick, draw = magenta, ->] (11) edge node[pos=0.5, inner sep = 1mm,sloped] {}  (10);

        \draw [thick, draw = MidnightBlue, ->] (14) edge node[pos=0.5,inner sep = 1mm,sloped] {}  (13);

        \draw [thick, draw = ForestGreen, ->, dashed] (1) edge node[pos=0.5, inner sep=1mm,sloped] {}(3);

        \draw  (0, -1.9) node {\small (g) \ $R\#G' / \langle e_6 + e_5 + e_4 + e_2 \rangle$};
         \draw (-2.5, 1.2) node[anchor=west] {\small $Q_{ \hat{ \{6, 5, 4, 2\} } } \colon$ };
         \draw  (0, -1.3) node {\small $I_{ \hat{\{6, 5, 4, 2\}} }  = 0$. };
            \end{scope}

        %%%%%%%%%% 6 + 5 + 4 + 3 %%%%%%%%%%%%%%
        \begin{scope}[yshift=-9cm, xshift=4.9cm]
        \draw (1*360/3: \vradius) node [circle, text width=3mm, align=center] (0) {0};
        \draw (2*360/3: \vradius) node [circle, text width=3mm, align=center] (2) {2};
        \draw (3*360/3: \vradius) node [circle, text width=3mm, align=center] (3) {3};

        \foreach \b [count=\a] in {10,...,12}{%
            \draw (\a*360/3: \xradius) node [circle, text width=14mm, align=center] (\b) {};
            \draw (\a*360/3: \yradius) node [circle, text width=3.5mm, align=center] (\inteval{\b+3}) {};
        }

        \draw [thick, draw = magenta, ->] (12) edge node[pos=0.5, inner sep = 1mm,sloped] {}  (11);

        \draw [thick, draw = MidnightBlue, ->] (15) edge node[pos=0.5,inner sep = 1mm,sloped] {}  (14);

        \draw [thick, draw = ForestGreen, ->, dashed] (0) edge node[pos=0.5, inner sep=1mm,sloped] {}(2);

        \draw  (0, -1.9) node {\small (h) \ $R\#G' / \langle e_6 + e_5 + e_4 + e_1 \rangle$};
         \draw (-2.5, 1.2) node[anchor=west] {\small $Q_{ \hat{ \{6, 5, 4, 1\} } } \colon$ };
         \draw  (0, -1.3) node {\small $I_{ \hat{\{6, 5, 4, 1\}} }  = 0 $. };
            \end{scope}

        %%%%%%%%%% 6 + 5 + 3 + 2 %%%%%%%%%%%%%%
        \begin{scope}[yshift=-9cm, xshift=9.8cm]
        \draw (1*360/3: \vradius) node [circle, text width=3mm, align=center] (0) {0};
        \draw (2*360/3: \vradius) node [circle, text width=3mm, align=center] (1) {1};
        \draw (3*360/3: \vradius) node [circle, text width=3mm, align=center] (4) {4};

        \foreach \b [count=\a] in {10,...,12}{%
            \draw (\a*360/3: \xradius) node [circle, text width=14mm, align=center] (\b) {};
            \draw (\a*360/3: \yradius) node [circle, text width=3.5mm, align=center] (\inteval{\b+3}) {};
        }

        \draw [thick, draw = magenta, ->] (11) edge node[pos=0.5, inner sep = 1mm,sloped] {}  (10);

        \draw [thick, draw = MidnightBlue, ->] (14) edge node[pos=0.5,inner sep = 1mm,sloped] {}  (13);

        \draw  (0, -1.9) node {\small (i) \ $R\#G' / \langle e_6 + e_5 + e_3 + e_2 \rangle$};
         \draw (-2.5, 1.2) node[anchor=west] {\small $Q_{ \hat{ \{6, 5, 3, 2\} } } \colon$ };
         \draw  (0, -1.3) node {\small $I_{ \hat{\{6, 5, 3, 2\}} }  = 0$. };
            \end{scope}

        %%%%%%%%%% 6 + 5 + 3 + 1 %%%%%%%%%%%%%%
        \begin{scope}[yshift=-13.1cm]
        \draw (1*360/3: \vradius) node [circle, text width=3mm, align=center] (0) {0};
        \draw (2*360/3: \vradius) node [circle, text width=3mm, align=center] (2) {2};
        \draw (3*360/3: \vradius) node [circle, text width=3mm, align=center] (4) {4};

        \draw [thick, draw = ForestGreen, ->, dashed] (0) edge node[pos=0.5, inner sep=1mm,sloped] {}(2);
        \draw [thick, draw = ForestGreen, ->, dashed] (2) edge node[pos=0.5, inner sep=1mm,sloped] {}(4);

        \draw  (0, -1.9) node {\small (j) \ $R\#G' / \langle e_6 + e_5 + e_3 + e_1 \rangle$};
         \draw (-2.5, 1.2) node[anchor=west] {\small $Q_{ \hat{ \{6, 5, 3, 1\} } } \colon$ };
         \draw  (0, -1.3) node {\small $I_{ \hat{\{6, 5, 3, 1\}} }  = 0$. };
            \end{scope}

                %%%%%%%%%%%% 6 + 5 + 4 + 3 + 2 + 1 %%%%%%%
        \begin{scope}[yshift=-13.1cm, xshift=4.9cm]
        \draw (1*360/3: \vradius) node [circle, text width=3mm, align=center] (0) {0};
        \draw (2*360/3: \vradius) node [circle, text width=3mm, align=center] (1) {1};
        
        \draw (1*360/3: \xradius) node [circle, text width=14mm, align=center] (2) {};
        \draw (2*360/3: \xradius) node [circle, text width=14mm, align=center] (3) {};

        \draw (1*360/3: \yradius) node [circle, text width=3.5mm, align=center] (4) {};
        \draw (2*360/3: \yradius) node [circle, text width=3.5mm, align=center] (5) {};

        \draw [thick, draw = magenta, ->] (3) edge node[pos=0.5, inner sep=1mm,sloped] {}(2);
        \draw [thick, draw = MidnightBlue, ->] (5) edge node[pos=0.5, inner sep=1mm,sloped] {}(4);

        \draw  (0, -1.9) node {\small (k) \ $R\#G' / \langle 1 - e_0 - e_1 \rangle$};
         \draw (-2.5, 1.2) node[anchor=west] {\small $Q_{ \hat{ \{6, 5, 4, 3, 1\} } } \colon$ };
         \draw  (0, -1.3) node {\small $I_{ \hat{\{6, 5, 4, 3, 1\}} }  = 0$. };
          \end{scope}

          \begin{scope}[xshift=9.8cm, yshift=-13.1cm]
        \draw (1*360/3: \vradius) node [circle, text width=3mm, align=center] (0) {0};
        \draw (2*360/3: \vradius) node [circle, text width=3mm, align=center] (2) {2};  

        \draw [thick, draw = ForestGreen, ->, dashed] (0) edge node[pos=0.5, inner sep=1mm,sloped] {}(2);
        
        \draw  (0, -1.9) node {\small (l) \ $R\#G' / \langle 1 - e_0 - e_2 \rangle$};
         \draw (-2.5, 1.2) node[anchor=west] {\small $Q_{ \hat{ \{6, 5, 4, 3, 1\} } } \colon$ };
         \draw  (0, -1.3) node {\small $I_{ \hat{\{6, 5, 4, 3, 1\}} }  = 0$. };
          \end{scope}

    \begin{scope}[yshift = - 16.6cm]
        \draw (0.5, 0) node [circle, text width=5mm, align=center] (0) {0};
        \draw (-1, 0) node [circle, text width=5mm, align=center] (3) {3};

        \draw (-2.5, 0.6) node[anchor=west] {\small $Q_{ \hat{ \{6, 5, 4, 2, 1\} } } \colon$ };
        \draw  (0, -1.2) node {\small (m) \ $R\#G' / \langle 1 - e_0 - e_3 \rangle$};;
         \draw  (0, -0.6) node {\small $I_{ \hat{\{6, 5, 4, 2, 1\}} }  = 0$. };
    \end{scope}
        \end{tikzpicture}
        
        \caption{All isomorphism classes of quotients $R\#G'/e$ which have finite global dimension. In each case, there is an isomorphism $R\#G'/e \cong \C Q_{\hat{V}}/I_{\hat{V}}$. }
        \label{fig:quot idemp}
    \end{figure}
\end{example}
\appendix    
    
\printbibliography

\end{document}

%% file: 2_packages.tex
\providecommand{\lang}{UKenglish}
\providecommand{\geomoptions}{hmargin=3cm,vmargin=3.5cm}

\usepackage[utf8]{inputenc}

\usepackage[\lang]{babel}
\usepackage{csquotes}
\usepackage{lmodern}
\usepackage[T1]{fontenc}
\usepackage[\geomoptions]{geometry}
\usepackage{amsmath,amssymb,amsthm,amscd,mathrsfs,mathtools,nccmath,upgreek,euscript,dsfont,amsfonts,colonequals,graphicx, pifont,array}
\usepackage{adjustbox}

\usepackage[dvipsnames,svgnames]{xcolor}
\usepackage[pdflang=en-UK, colorlinks, urlcolor=Navy, linkcolor=Navy, citecolor=Navy]{hyperref}
\usepackage[nameinlink,noabbrev]{cleveref}
\crefname{section}{\S}{\S\S} 
\usepackage{tikz}
	\usetikzlibrary{calc}
	\usetikzlibrary{arrows,decorations.markings}
\usepackage{tikz-cd}
\tikzcdset{scale cd/.style={every label/.append style={scale=#1},
    cells={nodes={scale=#1}}}}
\usetikzlibrary{arrows}
\usetikzlibrary{shapes.geometric}

\usepackage{pgffor}
\usepackage{pstricks}
\usepackage{enumitem}
 
\input{4_macros.tex}

%% file: 4_macros.tex
\newcommand{\donothing}[1]{}

\setlist[enumerate]{format=\normalfont}

\renewcommand{\hat}{\widehat}
\renewcommand{\tilde}{\widetilde}

\renewcommand{\leq}{\leqslant}

\renewcommand{\alpha}{\upalpha}
\renewcommand{\beta}{\upbeta}
\renewcommand{\chi}{\upchi}
\renewcommand{\delta}{\updelta}
\renewcommand{\epsilon}{\upvarepsilon}
\renewcommand{\eta}{\upeta}
\renewcommand{\gamma}{\upgamma}
\renewcommand{\iota}{\upiota}
\renewcommand{\kappa}{\upkappa}
\renewcommand{\lambda}{\uplambda}
\renewcommand{\mu}{\upmu}
\renewcommand{\nu}{\upnu}
\renewcommand{\omega}{\upomega}
\renewcommand{\phi}{\upphi}
\renewcommand{\pi}{\uppi}
\renewcommand{\psi}{\uppsi}
\renewcommand{\rho}{\uprho}
\renewcommand{\sigma}{\upsigma}
\renewcommand{\tau}{\uptau}
\renewcommand{\theta}{\uptheta}
\renewcommand{\upsilon}{\upupsilon}

\renewcommand{\Delta}{\Updelta}
\renewcommand{\Gamma}{\Upgamma}
\renewcommand{\Lambda}{\Uplambda}
\renewcommand{\Omega}{\Upomega}
\renewcommand{\Psi}{\Uppsi}
\renewcommand{\Sigma}{\Upsigma}
\renewcommand{\Theta}{\Uptheta}
\renewcommand{\Upsilon}{\Upupsilon}
\renewcommand{\Xi}{\Upxi}

\DeclareMathOperator{\depth}{depth}

\DeclareMathOperator{\soc}{soc}
\DeclareMathOperator{\radd}{rad}

\newcommand{\idlm}{\mathfrak{m}}

\newcommand{\SL}{\mathrm{SL}}
\DeclareMathOperator{\Span}{Span}

\DeclareMathOperator{\Hom}{Hom}

\DeclareMathOperator{\End}{End}

\DeclareMathOperator{\uEnd}{\underline{End}}

\DeclareMathOperator{\Img}{Im}
\newcommand\Dtensor{\otimes^{\Lderived{}\kern-0.5em}
}

\newcommand{\op}{\mathrm{op}}
\newcommand{\bdd}{\mathrm{b}}

\DeclareMathOperator{\Dcat}{D}
\DeclareMathOperator{\Kcat}{K}
\newcommand\Db{\Dcat^\bdd}
\newcommand\Kb{\Kcat^\bdd}

\DeclareMathOperator{\modu}{mod}

\DeclareMathOperator{\proj}{proj}

\DeclareMathOperator{\add}{add}

\DeclareMathOperator{\fl}{f\kern -.5pt l}
\DeclareMathOperator{\CM}{CM}

\DeclareMathOperator{\Mod}{Mod}

\DeclareMathOperator{\coh}{coh}

\DeclareDocumentCommand \Rderived { o m } {%
  \IfNoValueTF {#1} {%
    {\rm\bf R}{#2}%
  }{%
    {\rm\bf R}^{#1}{#2}%
  }%
}

\DeclareDocumentCommand \Lderived { o m } {%
  \IfNoValueTF {#1} {%
    {\rm\bf L}{#2}%
  }{%
    {\rm\bf L}^{#1}{#2}%
  }%
}
\DeclareMathOperator{\Ext}{Ext}
\DeclareMathOperator{\Tor}{Tor}
\newcommand{\cohom}[1]{\mathrm{H}^{#1}}

\DeclareMathOperator{\cone}{cone}

\DeclareMathOperator{\pdim}{p.dim}
\DeclareMathOperator{\tot}{Tot}

\newcommand{\rmL}{\mathrm{L}}
\newcommand{\rmR}{\mathrm{R}}

\newcommand{\RA}{\mathrm{RA}}
\newcommand{\LA}{\mathrm{LA}}

\DeclareMathOperator{\spec}{Spec}
\DeclareMathOperator{\maxspec}{MaxSpec}

\newcommand{\con}{\mathrm{con}}
\newcommand*{\cHom}{\EuScript{H}\kern -.5pt \mathit{om}}
\newcommand*{\cEnd}{\EuScript{E}\kern -.5pt \mathit{nd}}

\DeclareMathOperator{\Hilb}{Hilb}

\newcommand{\tow}{\xrightarrow}

\newcommand{\N}{\mathbb{N}}
\newcommand{\Z}{\mathbb{Z}}

\newcommand{\C}{\mathbb{C}}

\newcommand{\cA}{\EuScript{A}}
\newcommand{\cB}{\EuScript{B}}
\newcommand{\cC}{\EuScript{C}}

\newcommand{\cN}{\EuScript{N}}

\newcommand{\cR}{\EuScript{R}}

\newcommand{\cT}{\EuScript{T}}

%% file: 1_biblatex.tex
\usepackage[backend=bibtex,citestyle=alphabetic,bibstyle=alphabetic,maxbibnames=99, giveninits=true,doi=false,isbn=false,url=false,eprint=true,useprefix=true]{biblatex}
\appto{\bibsetup}{\sloppy}

\DeclareFieldFormat{postnote}{#1}

\renewbibmacro{in:}{%
  \ifentrytype{article}{}{\printtext{\bibstring{in}\intitlepunct}}}

\DeclareFieldFormat[article,inbook,incollection,inproceedings,patent]{title}{#1}
\DeclareFieldFormat[thesis,unpublished]{title}{{\it #1}}

\DeclareFieldFormat[online]{date}{Preprint (#1)}

\renewbibmacro*{publisher+location+date}{%
  \printlist{publisher}%
  \setunit*{\addcomma\space}%
  \usebibmacro{date}%
  \newunit}
  
\DeclareFieldFormat{eprint:arxiv}{%
\ifentrytype{online}
  {\ifhyperref
    {\href{http://arxiv.org/abs/#1}{\nolinkurl{arXiv:#1}}}
    {\nolinkurl{arXiv:#1}}
   \iffieldundef{eprintclass}
    {}
    {{\tt \mkbibbrackets{\thefield{eprintclass}}}}}
  {\iffieldundef{eprintclass}
    {\mkbibparens{\ifhyperref
    {\href{http://arxiv.org/abs/#1}{\nolinkurl{arXiv:#1}}}
    {\nolinkurl{arXiv:#1}}}}
    {\mkbibparens{\ifhyperref
    {\href{http://arxiv.org/abs/#1}{\nolinkurl{arXiv:#1}}}
    {\nolinkurl{arXiv:#1}}
    {\tt \mkbibbrackets{\thefield{eprintclass}}}}}}}

%% file: 3_amsthm.tex
\theoremstyle{plain}
\newtheorem{theorem}{Theorem}[section]
\newtheorem{theorem*}{Theorem}
\newtheorem{lemma}[theorem]{Lemma}
\newtheorem{cor}[theorem]{Corollary}
\newtheorem{cor*}{Corollary}
\newtheorem{prop}[theorem]{Proposition}

\newtheorem{thmx}{Theorem}
\newtheorem{corx}[thmx]{Corollary}
\newtheorem{setx}[thmx]{Setup}
\theoremstyle{definition}
\newtheorem{definition}[theorem]{Definition}
\newtheorem{example}[theorem]{Example}

\newtheorem{setup}[theorem]{Setup}
\newtheorem{remark}[theorem]{Remark}
\newtheorem*{definition*}{Definition}

\theoremstyle{remark}
\newtheorem{notation}[theorem]{Notation}

\numberwithin{equation}{section}